\documentclass[11pt,a4paper]{article}
\usepackage[utf8]{inputenc}
\usepackage{amsmath}
\usepackage{amsfonts}
\usepackage{amssymb}
\usepackage{amsthm}
\usepackage{fullpage}
\usepackage{mdwlist}
\usepackage{url}
\usepackage[numbers,sort,longnamesfirst]{natbib}

\newcommand{\ceil}[1]{\lceil{#1}\rceil}
\newcommand{\floor}[1]{\lfloor{#1}\rfloor}

\newtheorem{theorem}{Theorem}
\newtheorem{lemma}{Lemma}[section]
\newtheorem{conjecture}[lemma]{Conjecture}
\theoremstyle{remark}
\newtheorem{claim}{Claim}

\renewcommand{\thefootnote}{\fnsymbol{footnote}}

\title{\bf Cycles of given size in a dense graph}
\author{Daniel~J. Harvey\footnotemark[2] \qquad David~R. Wood\footnotemark[2]}
\begin{document}
\maketitle
\footnotetext[2]{School of Mathematical Sciences, Monash University, Melbourne, Australia. \url{{Daniel.Harvey,David.Wood}@monash.edu}. Supported by the Australian Research Council.}

\renewcommand{\thefootnote}{\arabic{footnote}}

\begin{abstract}
We generalise a result of \citeauthor{corrhaj} and show that every graph with average degree at least $\tfrac{4}{3}kr$ contains $k$ vertex disjoint cycles, each of order at least $r$, as long as $k \geq 6$. This bound is sharp when $r=3$.
\end{abstract}

\section{Introduction} %A lot of these other results are about "factors" that use all the vertices, which aren't really equivalent to the minor question we consider.
A well-known result by \citet{corrhaj} states that every graph $G$ with at least $3k$ vertices and minimum degree $2k$ contains $k$ (vertex) disjoint cycles. Since every graph $G$ contains a subgraph with minimum degree greater than half the average degree of $G$, this implies that every graph with average degree at least $4k-2$ (and as such at least $4k-1 \geq 3k$ vertices) contains $k$ disjoint cycles. This result is asymptotically sharp since the complete bipartite graph $K_{n,2k-1}$ does not contain $k$ disjoint cycles (since each cycle contains at least two vertices from the small part) but has average degree tending to $4k-2$ as $n \rightarrow \infty$.

%I think there are enough of both kinds that overlap that we shouldn't sep the references
Many extensions of the result of \citet{corrhaj} have been established. \citet{just} proved a density version of the theorem of \citeauthor{corrhaj}, showing a graph with $n \geq 3k$ vertices and more than $\max\{(2k-1)(n-k), \binom{3k-1}{2} + (n-3k+1) \}$ edges contains $k$ disjoint cycles. \citet{verstra1} proved that for each integer $k$ there exists an $n_k$ such that every graph with minimum degree at least $2k$ and at least $n_k$ vertices contains $k$ disjoint cycles of the same order. \citet{hwang3} proved that given $k \geq 2$ and $d \geq 2k$ and $n \geq 3k$, every graph with order $n$ and minimum degree at least $d$ contains $k$ disjoint cycles containing at least $\min\{2d,n\}$ vertices. \citet{kenandpals} proved that every graph with minimum degree at least $2k$ and a sufficiently large number of vertices contains $k$ disjoint even cycles except in a handful of exceptional cases.

Other extensions find specified $2$-factors of $G$ (where a $2$-factor is a set of disjoint cycles that span $V(G)$), for example \citep{hwang2,aigbra,hwang4,hwang5,gouldsurvey,ayconj,deg2factor,all2factor}, or replace the minimum degree requirement by alternatives like \citeauthor{ore} type degree conditions\footnote{Recall \citet{ore} showed that an $n$-vertex graph $G$ contains a Hamiltonian cycle if $\deg v + \deg w \geq n$ for every pair of non-adjacent vertices $v,w$}, for example \citep{kky,kotaro,hwang3,gouldsurvey,deg2factor,chord,all2factor}. 

%The following line will go away once we sort those cites in Factors or Ore. I think that should be enough, as we have converted 5 lines into 18 or so.
%See \citep{gouldsurvey,ayconj,deg2factor,chord,all2factor} for examples of these directions.
In this paper, we consider a different direction, in which a lower bound on the size of the desired cycles is specified. This direction has previously been investigated by \citet{hwang1}, who proved that every graph with minimum degree at least $2k$ and at least $4k$ vertices (where $k \geq 2$) contains $k$ disjoint cycles of order at least four, except in three exceptional cases. Hence, since every graph with average degree at least $4k-1$ contains a subgraph with average degree $4k-1$ (and thus $4k$ vertices) and minimum degree at least $2k$, it follows a graph with average degree at least $4k-1$ contains $k$ disjoint cycles of order at least four, when $k \geq 2$. (The lower bound on the average degree precludes the subgraph from being one of the exceptional cases; we omit the proof of this fact.) \citet{hwang6,hwang7} previously showed that if $G$ is a balanced bipartite graph with average degree at least $(s-1)k+1$ and at least $sk$ vertices in each part (where $s \geq 2$), then $G$ contains $k$ disjoint cycles of length at least $2s$. We prove the following theorem.

\begin{theorem}
\label{theorem:intermsofsubgraphs}
For integers $k \geq 6$ and $r \geq 3$, every graph with average degree at least $\tfrac{4}{3}kr$ contains $k$ disjoint cycles, each containing at least $r$ vertices.
%Let $k \geq 9, r \geq 3$ be integers. If $d(G) \geq \tfrac{4}{3}kr$ then $G$ contains $k$ disjoint cycles, each containing at least $r$ vertices.
\end{theorem}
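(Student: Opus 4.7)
The plan is to combine the standard density-to-minimum-degree reduction with an extremal exchange argument on a maximum family of disjoint long cycles. First, pass to a subgraph $H \subseteq G$ with $\delta(H) > \tfrac{2}{3}kr$; such an $H$ exists because $G$ has average degree at least $\tfrac{4}{3}kr$. It suffices to find $k$ vertex-disjoint cycles of length at least $r$ in $H$. Choose a family $\{C_1,\ldots,C_m\}$ of vertex-disjoint cycles in $H$, each of length $\geq r$, of maximum size $m$, and subject to that, minimising $\sum |V(C_i)|$. Suppose for contradiction $m \leq k-1$, and let $U = V(H) \setminus \bigcup_i V(C_i)$.

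Since $H[U]$ contains no cycle of length $\geq r$, its circumference is at most $r-1$, and the Erd\H{o}s--Gallai circumference theorem gives $e(H[U]) \leq \tfrac{(r-1)(|U|-1)}{2}$. Combining this with the minimum-degree bound on $\sum_{v\in U}\deg_H(v)$ yields
\[
e\bigl(U,\,V(H)\setminus U\bigr) \;>\; |U|\bigl(\tfrac{2}{3}kr - (r-1)\bigr).
\]
Averaging over the at most $k-1$ cycles $C_i$ produces a cycle receiving more than $\tfrac{|U|}{k-1}\bigl(\tfrac{2}{3}kr - r + 1\bigr)$ edges from $U$, which is approximately $\tfrac{2}{3}r|U|$; a further averaging over $U$ then produces a vertex $v \in U$ with roughly $\tfrac{2}{3}r$ neighbours on some $C_i$.

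The crux is an exchange lemma: if $v \in U$ has sufficiently many neighbours on $C_i$, one can alter the family to contradict the choice made above. With $\gtrsim \tfrac{2}{3}r$ neighbours spread around $C_i$, two consecutive neighbours of $v$ are close on the cycle, so the short arc between them plus $v$ forms a short cycle, while the long arc plus $v$ forms a cycle of length $\geq r$. By selecting which arcs to use, one should be able to either split $V(C_i) \cup \{v\}$ into two disjoint cycles of length $\geq r$ (contradicting maximality of $m$), or replace $C_i$ by a strictly shorter cycle of length $\geq r$ on $V(C_i) \cup \{v\}$ with a vertex freed into $U$ (contradicting the minimality of $\sum |V(C_i)|$). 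The hypothesis $k \geq 6$ enters here because the $1/(k-1)$ loss in the averaging must still leave enough $v$-to-$C_i$ edges to force a workable geometric configuration.

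I expect the main obstacle is precisely this quantitative exchange: one must ensure that the arcs of $C_i$ cut by $v$'s neighbours can always be re-routed into \emph{two} valid cycles of length at least $r$, rather than one legal cycle and a leftover path that cannot be closed. This likely demands a case split according to whether $|C_i|$ is close to $r$ or substantially larger than it, together with a second case split by the size of $|U|$: when $|U|$ is small the edge count above is weak, so the argument must instead exploit edges \emph{between} cycles $C_i$ and $C_j$, running an analogous exchange there; when $|U|$ is large the bound above is the right tool. Handling these cases uniformly, and extracting cycles of the required length in each, is where the technical work lies.
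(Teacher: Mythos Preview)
Your reduction throws away exactly the information the paper's proof depends on. Passing to a subgraph with $\delta(H) > \tfrac{2}{3}kr$ retains only a minimum-degree condition, and from that point on you are effectively trying to prove the statement ``minimum degree at least $\tfrac{2}{3}kr$ forces $k$ disjoint cycles of length $\geq r$''. That is precisely the paper's Conjecture~5.2, which the authors leave open. Their proof instead contracts to a \emph{minimal} minor (no vertex deletion or edge contraction lowers the average degree), which yields not only $\delta > \tfrac{2}{3}kr$ but also $\tau > \tfrac{2}{3}kr - 1$, where $\tau$ is the minimum number of common neighbours over adjacent pairs. All of the paper's exchange arguments are driven by common neighbourhoods $W(vw) = N(v)\cap N(w)\cap V(\mathcal{C}(r))$ of an edge $vw$, not by the neighbourhood of a single vertex; this is what lets them splice a \emph{path} from a length-$r$ cycle into a short cycle to bring it up to length $r$, and is the content of their Lemmas~4.1--4.3. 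Your single-vertex averaging cannot reproduce this.

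The concrete failure of your exchange is visible already in the generic case. After minimising $\sum_i |V(C_i)|$, each $C_i$ may well have length exactly $r$. If $v\in U$ has about $\tfrac{2}{3}r$ neighbours on such a $C_i$, then consecutive neighbours are at distance roughly $3/2$, so the two arcs through $v$ have lengths roughly $3$ and roughly $r$; there is no way to obtain \emph{two} cycles of length $\geq r$ from $V(C_i)\cup\{v\}$ since $|V(C_i)\cup\{v\}| = r+1 < 2r$. Replacing $C_i$ by the long arc through $v$ frees exactly one vertex into $U$ while absorbing $v$, so $\sum_i |V(C_i)|$ is unchanged and no progress is made. Your sketch anticipates this obstacle but does not resolve it, and it cannot be resolved with minimum degree alone: the paper's mechanism for overcoming it is to take an edge of a short cycle, find a long path among its common neighbours inside some $F_i\in\mathcal{C}(r)$, and splice that path in --- an operation that needs the $\tau$ bound, not the $\delta$ bound. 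Incidentally, the restriction $k\geq 6$ in the paper does not come from an averaging loss; it arises from an integer linear program in the proof of Lemma~4.3 which becomes infeasible only when $k-1\geq 5$.
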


If $r=3$, then this gives the above corollary of the theorem of \citeauthor{corrhaj} (albeit with a restriction on $k$ and without the $-2$). However, Theorem~\ref{theorem:intermsofsubgraphs} allows us to instead ensure the existence of larger cycles. Our approach to proving this result relies heavily on the concept of minors.

\section{A Minor Approach}
Recall a graph $H$ is a minor of a graph $G$ if a graph isomorphic to $H$ can be constructed from $G$ by a series of vertex deletions, edge deletions and edge contractions. Let $H$ be the graph consisting of $k$ disjoint cycles of order $r$. If $G$ contains $k$ disjoint cycles, each with at least $r$ vertices, then clearly $G$ contains $H$ as a minor. Alternatively, if $G$ contains $H$ as a minor, then by ``uncontracting" each vertex of $H$ we obtain a subgraph of $G$ consisting of $k$ cycles, each with at least $r$ vertices. Hence the following theorem is equivalent to Theorem~\ref{theorem:intermsofsubgraphs}.

\begin{theorem}
\label{theorem:cycles}
For integers $k \geq 6$ and $r \geq 3$, every graph with average degree at least $\tfrac{4}{3}kr$ contains $H$ as a minor, where $H$ is the graph consisting of $k$ disjoint cycles of order $r$.
%Let $G$ be a graph, and $H$ the graph consisting of $k$ disjoint $r$ cycles. Let $k \geq 9$ and $r \geq 3$. If $d(G) \geq \tfrac{4}{3}kr$, then $G$ contains $H$ as a minor.
\end{theorem}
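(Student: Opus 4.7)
The plan is to proceed by considering a minimum counterexample. Let $G$ be a graph with average degree at least $\tfrac{4}{3}kr$ that contains no $H$-minor and that minimises $|V(G)|+|E(G)|$. A standard averaging argument first shows that $G$ has minimum degree at least $\tfrac{2}{3}kr$: if some vertex $v$ had smaller degree, then $G-v$ would still satisfy the density hypothesis, still avoid an $H$-minor, and be strictly smaller than $G$, contradicting minimality.

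Next I would extract structural information from edge contractions. For any edge $uv \in E(G)$, every $H$-minor of $G/uv$ lifts to an $H$-minor of $G$, so $G/uv$ is not a counterexample; since it is strictly smaller than $G$, minimality forces its average degree below $\tfrac{4}{3}kr$. Comparing the edge counts of $G$ and $G/uv$, where the simple-graph contraction collapses the parallel edges produced by common neighbours of $u$ and $v$, yields that every edge of $G$ lies in at least $\tfrac{2}{3}kr$ triangles. This provides $G$ with a strong local density condition and, in particular, rules out bipartite-like configurations.

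With this structure in hand, I would locate the $k$ cycles by induction on $k$. The inductive step aims to find a cycle $C$ of length at least $r$ whose removal leaves a graph still satisfying the density hypothesis for $k-1$. When $n$ is considerably larger than $kr$, a first-moment averaging argument picks out an $r$-cycle whose vertex set carries few incident edges, so deletion preserves enough density; when $n$ is close to $\tfrac{2}{3}kr$, one instead exploits the abundant triangle structure from the previous step to locate a compact dense pocket from which an $r$-cycle can be excised without damaging the remainder. The main obstacle is precisely this bookkeeping: removing an $r$-cycle can knock out up to $r$ units of degree at each surviving vertex, whereas the inductive budget permits only an average loss of $\tfrac{2}{3}r$ per cycle. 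The hypothesis $k \geq 6$ likely emerges as the threshold at which this accounting closes, ruling out sporadic low-$k$ exceptions analogous to those appearing in the Corradi--Hajnal theorem.
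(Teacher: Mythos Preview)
Your setup is sound and matches the paper: passing to a minimum counterexample (what the paper calls a \emph{minimal} graph) yields both $\delta(G)>\tfrac{2}{3}kr$ and, via the edge-contraction argument, that every edge has more than $\tfrac{2}{3}kr-1$ common neighbours. These two facts are exactly what the paper extracts.

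The genuine gap is your third step. You propose to peel off one long cycle at a time while maintaining the density hypothesis, and you correctly name the obstacle---removing $r$ vertices may cost up to $r$ at every surviving vertex, whereas the budget is only $\tfrac{2}{3}r$ on average---but you do not resolve it. A ``first-moment averaging argument'' would have to average over the set of cycles of length at least $r$, and there is no reason such cycles are spread over low-degree vertices; every long cycle you can locate may run through a high-degree core. Nor does the common-neighbour condition obviously produce a ``compact dense pocket'' from which a long cycle can be excised cheaply. Finally, induction on $k$ needs a base case at $k=6$, which is itself the hard case and is left untouched.

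The paper does not induct on $k$. It works globally inside the minimal $G$: it chooses a collection $\mathcal{C}$ of vertex-disjoint cycles of order at most $r$ (allowing $K_1$ and $K_2$ as degenerate cycles) that is lexicographically maximal in the multiset of cycle lengths, and shows $|\mathcal{C}(r)|=k-1$. The engine is a pair of \emph{rerouting lemmas} (Lemmas~\ref{lemma:reroute} and~\ref{lemma:otherreroute}): whenever two edges in the leftover short cycles $\mathcal{U}=\mathcal{C}\setminus\mathcal{C}(r)$ each have more than roughly $\tfrac{2}{3}|V(\mathcal{C}(r))|$ common neighbours inside $\mathcal{C}(r)$, one can reroute through some $F_i\in\mathcal{C}(r)$ to improve $\mathcal{C}$. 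This is where the common-neighbour bound is actually spent---not to find a cheap cycle to delete, but to force edges of $\mathcal{U}$ to have many common neighbours in $\mathcal{C}(r)$. Iterating these claims pins down the structure of $\mathcal{U}$ (at most one cycle of order $>\tfrac{2}{3}r$, all but one small cycle a single vertex) and ultimately gives $|V(G)|\leq kr+\tfrac{1}{3}r$, contradicting $d(G)\geq\tfrac{4}{3}kr$. The restriction $k\geq 6$ enters only in Lemma~\ref{lemma:otherreroute}, where a system of integer inequalities becomes infeasible precisely when $k-1\geq 5$.
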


A well-known result by \citet{fast1} shows that sufficiently large average degree ($\geq 2^{t-2}$) is sufficient to force the existence of a complete minor $K_t$. Much work was done improving this required lower bound on the average degree, until \citet{fast3} and \citet{fast4,fast5} showed that average degree at least $\Theta(t\sqrt{\ln t})$ is sufficient and best possible. \citet{fast6} later determined the asymptotic constant for this bound. Similarly, we may consider what average degree is required to force an arbitrary given graph $H$ as a minor. \citet{myersthom} answered this question when $H$ is dense. Similarly, sparse graphs of the form $K_{s,t}$ (where $s \ll t$) have been well studied. \citet{k2t-myers} showed that every $n$-vertex graph with more than $\tfrac{1}{2}(t+1)(n-1)$ edges contains a $K_{2,t}$ minor, as long as $t$ is sufficiently large. More recently \citet{edk2t} proved this result for all $t$. \citet{edk3t} proved that every $n$-vertex graph with more than $\tfrac{1}{2}(t+3)(n-2)+1$ edges contains a $K_{3,t}$ minor. \citet{k2t-myers} conjectured that the average degree required to force a $K_{s,t}$ minor is linear in $t$; this was proven independently by \citet{edkst} and \citet{edkst-ko}. In a recent paper on sparse graphs $H$, \citet{superfast5} conjectured that average degree $\tfrac{4}{3}t-2$ was sufficient to force an arbitrary $2$-regular $t$-vertex graph $H$ as a minor. Theorem~\ref{theorem:cycles} essentially proves this conjecture when each component of $H$ has the same order (apart from the $-2$). Thus we can interpret our result in both the theme of \citet{corrhaj} and the theme of \citet{fast1}.

Call a graph $G$ \emph{minimal} if every vertex deletion or edge contraction lowers the average degree $d(G)$. Let $\delta(G)$ be the minimum degree of $G$, and $\tau(G)$ the minimum number of common neighbours for any pair of adjacent vertices in $G$. If $G$ is minimal then $\delta(G) > \tfrac{1}{2}d(G)$ and $\tau(G) > \tfrac{1}{2}d(G) - 1$, otherwise deleting a minimum degree vertex or contracting an edge with at most $\tau(G)$ common neighbours does not lower the average degree. This lower bound on $\tau(G)$ is the extra information gained when given a lower bound on the average degree instead of a lower bound on the minimum degree, and it is crucial to proving our result.

We prove the following theorem.
\begin{theorem}
\label{theorem:minimal}
For integers $k \geq 6$ and $r \geq 3$, every minimal graph with average degree at least $\tfrac{4}{3}kr$ contains a set of $k$ disjoint cycles, each with at least $r$ vertices.
%If $G$ is a minimal graph with $d(G) \geq \tfrac{4}{3}kr$, such that $k \geq 9$ and $r \geq 3$, then $G$ contains a set of $k$ disjoint cycles: $k-2$ with order exactly $r$ and two with order at least $r$.
\end{theorem}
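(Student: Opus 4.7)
The plan is to use an extremal collection argument in the spirit of Corr\'adi--Hajnal. The hypothesis gives $d(G) \geq \tfrac{4kr}{3}$, and minimality gives $\delta(G) > \tfrac{2kr}{3} \geq 2k$ (using $r \geq 3$) together with $n \geq d(G) + 1 > 3k$, so Corr\'adi--Hajnal produces $k$ disjoint cycles in $G$. Among all such collections $\mathcal{C} = \{C_1, \ldots, C_k\}$, I would choose one that first maximizes the number of cycles with $|V(C_i)| \geq r$ and, subject to that, minimizes the total vertex count $|F| := \sum_i |V(C_i)|$. Assuming for contradiction that the theorem fails, some cycle $C_b \in \mathcal{C}$ has $|V(C_b)| < r$; the goal is then to modify $\mathcal{C}$ so as to contradict the extremal choice.

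The two workhorses are $\delta(G) > \tfrac{2kr}{3}$ and $\tau(G) > \tfrac{2kr}{3} - 1$. For an edge $vv' \in E(C_b)$, the vertices $v$ and $v'$ share more than $\tfrac{2kr}{3} - 1$ common neighbors. If any such common neighbor $u$ lies outside $F$, then replacing $vv'$ in $C_b$ by the path $vuv'$ produces a longer cycle; iterating brings $|V(C_b)|$ up to $r$, improving $\mathcal{C}$ and contradicting the extremal choice. So we may assume every common neighbor of every edge of $C_b$ lies in $F$.

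In this remaining case, for each edge $vv'$ of $C_b$ its more than $\tfrac{2kr}{3} - 1$ common neighbors all lie in $F$, so by pigeonhole some other cycle $C_j$ contains such a common neighbor $u$; fix such $u, v, v', C_j$. The plan is a swap: incorporate $u$ into $C_b$ to lengthen it, and reroute $C_j - u$ into a replacement cycle of length at least $r$, using the codegree condition on edges incident to the two neighbors $u_1, u_2$ of $u$ on $C_j$ to find a detour through some vertex outside $F$ or through a third cycle. The main obstacle is executing this swap cleanly: one must verify a detour exists that preserves disjointness and keeps all other cycles of length at least $r$. This is presumably where the hypothesis $k \geq 6$ enters, providing enough cycles so the pigeonhole arguments comparing $\tau(G)$, $|F|$, and the individual $|V(C_j)|$ always leave at least one admissible swap; the careful accounting of how the $\tau(G)$ common neighbors of every edge of $C_b$ distribute across the cycles $C_1, \ldots, C_k$ and the remaining vertices $V(G) \setminus F$ is the crux of the argument.
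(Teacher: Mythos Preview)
Your outline stops exactly where the real work begins, and the extremal framework you set up does not obviously support the missing step. Two concrete issues. First, the secondary criterion ``minimize $|F|$'' pulls the wrong way: inserting a common neighbour $u\notin F$ into $C_b$ raises $|F|$ without yet raising the count of long cycles, so the intermediate configurations you pass through while ``iterating'' are no longer extremal, and you cannot keep invoking the extremal hypotheses along the way. Second, and more seriously, the swap you sketch---pull $u$ out of a long cycle $C_j$ into $C_b$, then repair $C_j-u$ by a detour through some vertex outside $F$ or through a third cycle---merely transfers the length deficit from $C_b$ to $C_j$ (or to the third cycle). You give no mechanism that prevents this from cycling indefinitely, and supplying one is essentially the whole theorem; the sentence ``the careful accounting \dots\ is the crux of the argument'' is an accurate description of what remains, not a proof.

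The paper's route is structurally different. It takes a collection $\mathcal{C}$ of disjoint cycles each of order \emph{at most} $r$ (with $K_1$, $K_2$ allowed as degenerate cycles), chosen to lexicographically maximize $(|\mathcal{C}(r)|,|\mathcal{C}(r-1)|,\dots,|\mathcal{C}(1)|)$. This forces $\mathcal{C}$ to cover $V(G)$, and any operation that lengthens one short cycle---even at the cost of destroying several smaller ones---is already a contradiction, so no deficit-chasing arises. The bound $\tau(G)>\tfrac{2}{3}kr-1$ is then fed into two path-routing lemmas on the $r$-cycles of $\mathcal{C}(r)$: roughly, if the common-neighbour sets $W(vw)$ and $W(xy)$ of two edges in short cycles each meet more than $\tfrac{2}{3}|V(\mathcal{C}(r))|$, one can find inside a single $r$-cycle a suitable path for one edge together with a leftover vertex (or a second disjoint path) for the other. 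The hypothesis $k\ge 6$ enters precisely in the second of these lemmas, as the infeasibility of a small integer linear program once $k-1\ge 5$. A chain of structural claims then whittles the short part $\mathcal{U}$ down to at most one ``big'' cycle, one ``small'' cycle, and a single isolated vertex, giving $|V(G)|\le (k-1)r+O(r)$ and contradicting $d(G)\ge\tfrac{4}{3}kr$. None of this machinery---the covering extremal structure, the routing lemmas, or the place where $k\ge 6$ is actually used---appears in your proposal.
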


Theorem~\ref{theorem:cycles} follows from Theorem~\ref{theorem:minimal} since every graph $G$ contains a minor $G'$ such that $G'$ is minimal and $d(G') \geq d(G)$. The proof of Theorem~\ref{theorem:minimal} is presented in the following section. It depends on a few technical lemmas, which we present in Section~\ref{sec:anc}.

\section{Proof of Theorem~\ref{theorem:minimal}}
\label{section:main}

%\begin{theorem}
%\label{theorem:minimal}
%If $G$ is a minimal graph with $d(G) \geq \tfrac{4}{3}kr$, such that $k \geq 9$ and $r \geq 4$, then $G$ contains a set of $k$ disjoint cycles: $k-2$ with order exactly $r$ and two with order at least $r$.
%\end{theorem}
%\begin{proof}
Assume for the sake of a contradiction that there is a minimal graph $G$ with $d(G) \geq \tfrac{4}{3}kr$ that does not contain $k$ disjoint cycles of order at least $r$. For the sake of simplicity, we allow $K_2$ to be thought of as a $2$-vertex cycle, and $K_1$ as a $1$-vertex cycle. Let $\mathcal{C}$ be a collection of disjoint cycles in $G$ each with order at most $r$. Let $\mathcal{C}(i)$ denote the cycles of order $i$ in $\mathcal{C}$ for $1 \leq i \leq r$.

Choose $\mathcal{C}$ such that $|\mathcal{C}(r)|$ is maximised, then $|\mathcal{C}(r-1)|$ is maximised, and so on until $|\mathcal{C}(1)|$ is maximised.
Let $\mathcal{U}$ denote $\mathcal{C}(1) \cup \mathcal{C}(2) \cup \dots \cup \mathcal{C}(r-1)$.
If $|\mathcal{C}(r)| \geq k$, then $G$ contains at least $k$ disjoint cycles of order $r$, giving our desired contradiction. Hence we may assume that $|\mathcal{C}(r)| \leq k-1$.

Let $V(\mathcal{U}) := \bigcup_{C \in \mathcal{U}} V(C)$ and $V(\mathcal{C}(r)) := \bigcup_{C \in \mathcal{\mathcal{C}}(r)} V(C)$. Also, let $|C|:=|V(C)|$ for a cycle $C$, and let $|P|:=|V(P)|$ for a path $P$. %Also, if $P$ is a path and $x,y$ are vertices not in $P$ such that $x$ is adjacent to one end vertex of $P$

\begin{claim}
\label{claim:truepartition}
Every vertex of $G$ is in a cycle of $\mathcal{C}$.
\end{claim}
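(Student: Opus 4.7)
The plan is to derive a contradiction from the extremal choice of $\mathcal{C}$. Specifically, $\mathcal{C}$ was chosen to lexicographically maximise the tuple $(|\mathcal{C}(r)|, |\mathcal{C}(r-1)|, \dots, |\mathcal{C}(1)|)$ subject to being a disjoint collection of cycles (with $K_1$ and $K_2$ counted as cycles) each of order at most $r$. So to prove the claim it is enough to exhibit, given any vertex $v \notin V(\mathcal{C})$, a collection $\mathcal{C}'$ of disjoint cycles of order at most $r$ that strictly beats $\mathcal{C}$ in this lex order.

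Suppose for contradiction that some vertex $v \in V(G)$ lies in no cycle of $\mathcal{C}$. Set $\mathcal{C}' := \mathcal{C} \cup \{\{v\}\}$, where $\{v\}$ is treated as the $1$-vertex cycle $K_1$ (which is permitted by the convention stated just above the claim, and has order $1 \leq r$ since $r \geq 3$). Because $v \notin V(\mathcal{C})$, the members of $\mathcal{C}'$ are pairwise vertex-disjoint, so $\mathcal{C}'$ is a legal cycle collection. Moreover $|\mathcal{C}'(i)| = |\mathcal{C}(i)|$ for every $2 \leq i \leq r$, while $|\mathcal{C}'(1)| = |\mathcal{C}(1)| + 1$.

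This directly contradicts the extremal choice of $\mathcal{C}$: reading the tuple $(|\mathcal{C}'(r)|, \dots, |\mathcal{C}'(2)|, |\mathcal{C}'(1)|)$ from left to right, it agrees with $(|\mathcal{C}(r)|, \dots, |\mathcal{C}(2)|, |\mathcal{C}(1)|)$ in all but the final coordinate, in which it is strictly larger. Hence no such $v$ exists, and every vertex of $G$ lies in some cycle of $\mathcal{C}$.

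There is essentially no obstacle here; the only thing one must be careful about is remembering that $K_1$ is counted as a cycle of order $1$, so the extremal choice really does rule out isolated-in-$\mathcal{C}$ vertices. The claim does not use the degree hypothesis or minimality of $G$, only the tie-breaking at the $|\mathcal{C}(1)|$ level of the lexicographic optimisation.
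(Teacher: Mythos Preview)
Your proof is correct and matches the paper's argument exactly: assume some $v$ is uncovered, add $\{v\}$ as a $1$-cycle, and observe this improves $|\mathcal{C}(1)|$ while leaving the higher coordinates unchanged, contradicting the extremal choice of $\mathcal{C}$. The paper states this in two lines; your version simply spells out the lexicographic comparison in more detail.
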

\begin{proof}
Assume there is some $v \in V(G)$ that is not in a cycle of $\mathcal{C}$. Add $v$ to $\mathcal{C}$ as a cycle of length 1. This clearly gives a better choice of $\mathcal{C}$, contradicting our initial choice.
\end{proof}

\begin{claim}
\label{claim:upstairstriangle}
If $C$ is a cycle of $\mathcal{U}$ and $vw$ is an edge of $C$, then $v$ and $w$ do not have a common neighbour in any other cycle of $\mathcal{U}$ with order at most $|C|$.
\end{claim}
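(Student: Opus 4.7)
The plan is to derive a contradiction with the lex-maximality of $\mathcal{C}$ by producing a strictly better collection $\mathcal{C}'$ whenever the claim fails. Suppose there exist distinct $C, C' \in \mathcal{U}$ with $c := |C|$ and $c' := |C'|$ satisfying $c' \leq c < r$, an edge $vw \in E(C)$, and a common neighbour $x \in V(C')$ of $v$ and $w$.

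The key move is to splice $x$ into $C$: replace the edge $vw$ with the length-two path $v$--$x$--$w$, producing a cycle $C^*$ on vertex set $V(C) \cup \{x\}$ of order $c + 1$. Since $C \in \mathcal{U}$ we have $c \leq r - 1$, so $|C^*| \leq r$, and $C^*$ is a valid cycle for our collection. Form
\[
\mathcal{C}' := (\mathcal{C} \setminus \{C, C'\}) \cup \{C^*\} \cup \bigl\{ \{y\} : y \in V(C') \setminus \{x\} \bigr\},
\]
where the singleton sets are treated as $1$-cycles under the stated convention. Pairwise vertex-disjointness of $\mathcal{C}'$ is immediate, since $V(C^*) \cup \bigcup_{y \in V(C') \setminus \{x\}} \{y\} = V(C) \cup V(C')$ and the cycles in $\mathcal{C} \setminus \{C, C'\}$ avoid $V(C) \cup V(C')$.

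It then remains to compare $\mathcal{C}'$ with $\mathcal{C}$ in the lex order on $(|\mathcal{C}(r)|, |\mathcal{C}(r-1)|, \dots, |\mathcal{C}(1)|)$. No cycle of order strictly greater than $c + 1$ has been added or removed: the removed cycles $C, C'$ have order at most $c < c + 1$, and every added cycle other than $C^*$ is a $1$-cycle. Hence $|\mathcal{C}'(i)| = |\mathcal{C}(i)|$ for all $i > c + 1$. At level $c + 1$, neither $C$ nor $C'$ had order $c + 1$, so $|\mathcal{C}'(c+1)| = |\mathcal{C}(c+1)| + 1$. This strictly improves $\mathcal{C}$ in the lex order at the highest index where they differ, contradicting the choice of $\mathcal{C}$.

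There is no serious obstacle once the splice-and-discard idea is found; the only thing to be careful about is the behaviour at small cycle orders, but these are absorbed uniformly by the $K_1, K_2$ convention (when $c = 2$ the spliced cycle $C^*$ is just the triangle $vxw$, and when $c' = 1$ there are simply no leftover vertices to relocate). The essential accounting is that the splice trades one cycle of order $c$ and one of order $c' \leq c$ for a single cycle of order $c + 1$, which is always a strict lex improvement regardless of how the leftover vertices from $C'$ are distributed.
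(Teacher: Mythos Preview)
Your proof is correct and follows essentially the same approach as the paper: splice $x$ into $C$ via the path $v,x,w$ to get a cycle of order $|C|+1 \le r$, discard $C'$, and observe that this strictly improves the lex order at level $|C|+1$. The only cosmetic difference is that you redistribute the leftover vertices of $C'$ as $1$-cycles, whereas the paper simply drops them; since the lex comparison is already decided at level $|C|+1$, either choice works.
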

\begin{proof}
Assume otherwise, and let $v,w$ have a common neighbour $x$ in a cycle $D \in \mathcal{U}$, where $|D| \leq |C|$. Replace the edge $vw$ in $C$ with the path $v,x,w$, and remove $D$ from $\mathcal{C}$. Now $\mathcal{C}$ has lost a cycle of order $|C|$ and a cycle of order $|D|$, but has gained a cycle of order $|C|+1$. This gives a better choice of $\mathcal{C}$ since $|C| \leq r-1$.  
\end{proof}

\begin{claim}
\label{claim:otherupstairstriangle}
If $C'$ is a cycle of $\mathcal{U}$, $vw$ is an edge of $C'$, and $C$ is a cycle of $\mathcal{U}-\{C'\}$ with $|C| \geq |C'|$, then $v$ and $w$ have at most $\tfrac{1}{3}|C|$ common neighbours in $C$.
\end{claim}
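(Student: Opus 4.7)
Assume for contradiction that $v$ and $w$ have $t > \tfrac{1}{3}|C|$ common neighbours in $C$. Listing them in cyclic order around $C$ as $u_1,\ldots,u_t$, the $t$ arc-gaps between consecutive $u_i$'s sum to $|C|$, so at least one gap has length $1$ or $2$; hence there exist two common neighbours $x,y$ at distance $1$ or $2$ on $C$. (The degenerate case $|C|=2$ forces $|C'|=2$ and is already handled by Claim~\ref{claim:upstairstriangle}, so I assume $|C|\geq 3$.)

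The plan is to construct a collection strictly larger than $\mathcal{C}$ in the specified lex order, contradicting the extremal choice. Suppose first that $x,y$ are at distance $2$ on $C$, with $z$ the interior vertex of the short arc. Let $P$ be the long arc of $C$ from $y$ to $x$ and form the cycle $C^{\ast} = x\,v\,w\,y\,P\,x$, of length $|C|+1 \leq r$. Replacing $C$ and $C'$ in $\mathcal{C}$ by $C^{\ast}$, together with $z$ and each vertex of $V(C')\setminus\{v,w\}$ treated as a $1$-vertex cycle, strictly increases $|\mathcal{C}(|C|+1)|$ while leaving all higher-indexed counts unchanged, a lex-improvement. If instead $x,y$ are adjacent on $C$, the same construction $C^{\ast} = x\,v\,w\,y\,P\,x$ (now with $P$ the long arc of length $|C|-1$) has length $|C|+2$, and the same replacement argument works as long as $|C|+2 \leq r$.

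The one remaining subcase is $x,y$ adjacent on $C$ with $|C|=r-1$, where $C^{\ast}$ has length $r+1$ and is too long to belong to $\mathcal{C}$. Here I would instead route the new cycle through $v$ alone: form $C^{\ast\ast} = v\,x\,P\,y\,v$ using the edges $vx$, the long arc $P$ (of length $|C|-1$), and $vy$; this is a cycle of length exactly $|C|+1 = r$. Replace $C$ and $C'$ by $C^{\ast\ast}$ together with $w$ and each vertex of $V(C')\setminus\{v\}$ as $1$-vertex cycles. Since $|\mathcal{C}(r)|$ strictly increases and no higher-indexed count changes, lex-maximality is contradicted again. This boundary subcase is the main obstacle: the natural routing through both $v$ and $w$ overshoots length $r$, so one of them has to be dropped from the new cycle, which is viable precisely because each of $v,w$ is individually adjacent to both $x$ and $y$.
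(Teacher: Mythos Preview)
Your proof is correct and follows the same pigeonhole-then-lengthen approach as the paper: use $t > \tfrac{1}{3}|C|$ to find two common neighbours at distance $1$ or $2$ on $C$, then build a strictly longer cycle to contradict maximality. The paper's treatment of the distance-$1$ case is shorter, however: since $v \in C'$ with $|C'| \leq |C|$ is a common neighbour of the adjacent pair $x,y \in C$, this directly contradicts Claim~\ref{claim:upstairstriangle} for every $|C|$, so your split into $|C|+2 \leq r$ versus $|C|=r-1$ (and the separate $C^{\ast\ast}$ construction, which is exactly the construction in the proof of Claim~\ref{claim:upstairstriangle}) is unnecessary --- you already invoked that claim for $|C|=2$, and it applies just as well in general.
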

\begin{proof}
Assume otherwise. Then there exist two vertices $u_1,u_2$ of $C$ such that $u_2$ is either one or two vertices clockwise from $u_1$. In the first case, $v$ is a common neighbour of $u_1$ and $u_2$, which contradicts Claim~\ref{claim:upstairstriangle}. In the second case, let $u_3$ be the vertex between $u_1$ and $u_2$. We construct a new cycle $D$ from $C$ by removing $u_3$ and adding the path $u_1,w,v,u_2$ between $u_1,u_2$. The cycle $D$ has order $|C|-1+2=|C|+1$. Replacing $C$ and $C'$ in $\mathcal{C}$ with $D$ gives a better choice of $\mathcal{C}$ since $|D| \leq r$.
\end{proof}

\begin{claim}
\label{claim:nolowedge}
If $x,y$ are 1-vertex cycles of $\mathcal{U}$, then there is no edge $xy$.
\end{claim}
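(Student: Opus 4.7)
The plan is to argue by contradiction using the lexicographic choice of $\mathcal{C}$. Suppose, for contradiction, that there is an edge $xy$ in $G$ with $\{x\},\{y\} \in \mathcal{C}(1)$. I would then form a new collection $\mathcal{C}'$ by deleting the two $1$-vertex cycles $\{x\}$ and $\{y\}$ from $\mathcal{C}$ and inserting in their place the single $2$-vertex cycle on $\{x,y\}$, using the edge $xy$ (legitimate by the stated convention that $K_2$ counts as a $2$-vertex cycle). Since $x$ and $y$ were already disjoint from every other cycle of $\mathcal{C}$, the new cycle is still disjoint from the rest, and its order $2$ is at most $r$ because $r \geq 3$. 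So $\mathcal{C}'$ is again a valid collection of pairwise disjoint cycles in $G$ each of order at most $r$.

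Comparing the two collections term by term, $|\mathcal{C}'(i)| = |\mathcal{C}(i)|$ for every $i \notin \{1,2\}$, while $|\mathcal{C}'(2)| = |\mathcal{C}(2)| + 1$ and $|\mathcal{C}'(1)| = |\mathcal{C}(1)| - 2$. In the ordering used to choose $\mathcal{C}$, one maximises $|\mathcal{C}(r)|$, then $|\mathcal{C}(r-1)|$, and so on down through $|\mathcal{C}(2)|$ before $|\mathcal{C}(1)|$. Since $r \geq 3$, the count $|\mathcal{C}(2)|$ has strictly higher priority than $|\mathcal{C}(1)|$, and every higher-indexed count is unchanged, so $\mathcal{C}'$ is strictly preferable to $\mathcal{C}$. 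This contradicts the maximality of $\mathcal{C}$, proving the claim.

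There is no real obstacle in this argument; it is a direct exchange of two singletons for a $K_2$, exactly parallel in spirit to the swaps used in Claims~\ref{claim:upstairstriangle} and~\ref{claim:otherupstairstriangle}. The only small points to verify are that $\mathcal{C}'$ remains a set of disjoint cycles of order at most $r$ (immediate, using $r \geq 3$) and that the swap improves the lexicographic score (immediate from the tier-ordering on the $|\mathcal{C}(i)|$).
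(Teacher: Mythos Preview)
Your proof is correct and takes essentially the same approach as the paper: replace the two $1$-vertex cycles $\{x\}$ and $\{y\}$ by the single $2$-vertex cycle $xy$, yielding a better choice of $\mathcal{C}$. The paper's version is a one-line statement of this swap, while yours spells out the lexicographic comparison in detail, but the idea is identical.
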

\begin{proof}
If the edge $xy$ exists, then replace $x$ and $y$ in $\mathcal{C}$ with $xy$, giving a better choice of $\mathcal{C}$.
\end{proof}

Given an edge $vw$ of a cycle in $\mathcal{U}$, let $W(vw) := N(v) \cap N(w) \cap V(\mathcal{C}(r))$. Given a vertex $v$ of a cycle in $\mathcal{U}$, let $W(v) := N(v) \cap V(\mathcal{C}(r))$. %be the set of vertices of cycles of $\mathcal{C}(r)$ that are neighbours of $v$.

We now present some key claims that use lemmas from Section~\ref{sec:anc}.
\begin{claim}
\label{claim:firstroute}
Let $C_1,C_2 \in \mathcal{U}$ be cycles such that $|C_1| \geq |C_2| \geq 2$, and let $vw,xy$ be edges of $C_1,C_2$ respectively. At least one of $W(vw),W(xy)$ contains at most $\tfrac{2}{3}|V(\mathcal{C}(r))|$ vertices.
\end{claim}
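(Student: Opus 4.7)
Suppose for contradiction that $|W(vw)| > \tfrac{2}{3}|V(\mathcal{C}(r))|$ and $|W(xy)| > \tfrac{2}{3}|V(\mathcal{C}(r))|$. Summing over the $r$-cycles gives
\[
\sum_{R \in \mathcal{C}(r)} \bigl(|W(vw)\cap V(R)| + |W(xy)\cap V(R)|\bigr) \;=\; |W(vw)| + |W(xy)| \;>\; \tfrac{4}{3}r\,|\mathcal{C}(r)|,
\]
so by averaging there exists some $R \in \mathcal{C}(r)$ with $|W(vw)\cap V(R)| + |W(xy)\cap V(R)| > \tfrac{4}{3}r$.

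The plan is to use this $R$ together with $C_1$ and $C_2$ to build a collection $\mathcal{C}'$ that strictly beats $\mathcal{C}$ in the lexicographic order on $(|\mathcal{C}(r)|,\ldots,|\mathcal{C}(1)|)$, contradicting the choice of $\mathcal{C}$. I seek two vertex-disjoint arcs $P_1, P_2$ of $R$ with $P_1$'s endpoints in $W(vw)\cap V(R)$ and $P_2$'s endpoints in $W(xy)\cap V(R)$: if $|C_1|+|C_2|\geq r$, take $|P_1|=r-|C_1|$ and $|P_2|=r-|C_2|$; if $|C_1|+|C_2|<r$, take $|P_1|=r-|C_1|$ and $|P_2|=|C_1|$ so that $P_1\cup P_2$ spans $V(R)$. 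With endpoints $u, u''$ of $P_1$ and $u', u'''$ of $P_2$, form $C_1^*$ from $C_1$ by replacing the edge $vw$ by the $v$--$w$ path $v,u,P_1,u'',w$, and analogously form $C_2^*$ from $C_2$ using the edge $xy$. Build $\mathcal{C}'$ by removing $R, C_1, C_2$ from $\mathcal{C}$, inserting $C_1^*, C_2^*$, and inserting any remaining vertices of $R$ as $1$-cycles.

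A short verification shows $\mathcal{C}' \succ \mathcal{C}$ in the lex order. In the first case $|C_1^*|=|C_2^*|=r$, so $|\mathcal{C}'(r)|=|\mathcal{C}(r)|+1$, an immediate contradiction. In the second case $|C_1^*|=r$ while $|C_2^*|=|C_1|+|C_2|$ is strictly greater than $\max(|C_1|,|C_2|)$ and at most $r-1$; the two collections then agree at every order strictly above $|C_1|+|C_2|$, and $|\mathcal{C}'(|C_1|+|C_2|)| = |\mathcal{C}(|C_1|+|C_2|)|+1$, again giving the desired lex improvement.

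The main obstacle is establishing the existence of the arcs $P_1$ and $P_2$ with the prescribed endpoint and length conditions. The sum bound $|W(vw)\cap V(R)|+|W(xy)\cap V(R)|>\tfrac{4}{3}r$ allows the two sets to be of very unequal sizes and to interleave arbitrarily on $R$, so a naive double-counting of pairs at a fixed cyclic distance is insufficient on its own; exhibiting the arcs is the combinatorial content of the technical lemmas of Section~\ref{sec:anc}, which I would invoke here to complete the argument.
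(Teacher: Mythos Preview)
Your framing (assume both sets large, then improve $\mathcal{C}$) is correct, but the reduction to a single $R$ by averaging is where the argument breaks. From the sum bound you only deduce $|W(vw)\cap V(R)|,\,|W(xy)\cap V(R)|>\tfrac{1}{3}r$ individually, and a set of size barely above $\tfrac{1}{3}r$ need not contain both endpoints of an arc of a prescribed length. Concretely, take $r=9$, $|C_1|=3$, $|C_2|=2$, $V(R)=\{1,\dots,9\}$, $W(vw)\cap V(R)=\{1,2,3,4\}$ and $W(xy)\cap V(R)=V(R)$; the sum is $13>12$, so averaging may hand you this $R$, yet no $6$-arc of $R$ has both ends in $\{1,2,3,4\}$ (the endpoints of any $6$-arc lie at cyclic distance $5$). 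Swapping roles so that $P_2$ is the long arc runs into the symmetric obstruction when it is $W(xy)\cap V(R)$ that is small and spread out. The lemmas of Section~\ref{sec:anc} do not supply what you ask for either: Lemma~\ref{lemma:imprweirdcase} works in one cycle but needs the asymmetric hypothesis $|T|\ge 2\lfloor r/3\rfloor+2$ and does not fix arc lengths, while Lemma~\ref{lemma:reroute} is a statement about the whole family, not one cycle after averaging.

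The paper does not average. It applies Lemma~\ref{lemma:reroute} directly to $\mathcal{F}=\mathcal{C}(r)$ with $S=W(xy)$, $T=W(vw)$, $q_i=r-|C_2|$; the global hypothesis $|S|,|T|>\tfrac{2}{3}|V(\mathcal{C}(r))|$ then produces, in some $F_i$, a path $P$ of exactly $r-|C_2|$ vertices with both ends in $W(xy)$ together with a \emph{single} vertex $u\in W(vw)$ in $V(F_i)\setminus V(P)$. Splicing $P$ into $C_2$ at $xy$ gives an $r$-cycle, inserting $u$ into $C_1$ at $vw$ gives a cycle of order $|C_1|+1$, and replacing $C_1,C_2,F_i$ by these two already beats $\mathcal{C}$ lexicographically. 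The key simplification over your plan is that only $C_2$ must reach length $r$; $C_1$ just needs to grow by one, so a lone vertex of $T$ suffices and no case split on $|C_1|+|C_2|$ versus $r$ is needed.
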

\begin{proof}
Assume for the sake of a contradiction that $|W(vw)| > \tfrac{2}{3}|V(\mathcal{C}(r))|$ and $|W(xy)| > \tfrac{2}{3}|V(\mathcal{C}(r))|$. Apply Lemma~\ref{lemma:reroute} with $\mathcal{F} := \mathcal{C}(r)$, $q_i := r-|C_2|$ for all $i=1,\dots,t$, $S:=W(xy)$ and $T:=W(vw)$. Thus there exists a path $P$ in some $F_i$ with $r-|C_2|$ vertices and both end vertices in $W(xy)$, together with a vertex $u$ of $W(vw)$ in $V(F_i)-V(P)$.

Construct cycle $C_1'$ from $C_1$ by removing the edge $vw$ and adding vertex $u$ and edges $vu,uw$. Construct cycle $C_2'$ from $C_2$ by removing the edge $xy$ and adding the path $P$ together with an edge from $x$ to one end vertex of $P$ and an edge from $y$ to the other end vertex. Thus $|C_1'|=|C_1|+1$ and $|C_2'|=|C_2|+r-|C_2|=r$. Hence if we remove $C_1,C_2,F_i$ from $\mathcal{C}$ and add $C_1',C_2'$ we lose cycles of order $|C_1|,|C_2|$ and $r$ but gain cycles of order $r$ and $|C_1|+1$. Since $r-1 \geq |C_1| \geq |C_2|$, this contradicts our initial choice of $\mathcal{C}$.
\end{proof}

This technique also works when one or both of the cycles is just a single vertex. The proofs of the next two claims are almost identical to the proof of Claim~\ref{claim:firstroute}, so we omit them.
\begin{claim}
\label{claim:firstroutea}
Let $C_1,C_2 \in \mathcal{U}$ be cycles such that $|C_1| \geq 2$ and $|C_2| = 1$, let $vw$ be an edge of $C_1$, and let $V(C_2) = \{x\}$. At least one of $W(vw),W(x)$ contains at most $\tfrac{2}{3}|V(\mathcal{C}(r))|$ vertices.
\end{claim}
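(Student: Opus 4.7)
The plan is to run essentially the same argument as Claim~\ref{claim:firstroute}, adjusted for the fact that $|C_2|=1$ means $C_2$ contributes no edge to replace but just a single vertex $x$ to splice into the new cycle. Suppose for contradiction that both $|W(vw)|>\tfrac{2}{3}|V(\mathcal{C}(r))|$ and $|W(x)|>\tfrac{2}{3}|V(\mathcal{C}(r))|$, and apply Lemma~\ref{lemma:reroute} with $\mathcal{F}:=\mathcal{C}(r)$, $q_i:=r-1$ for every $i$, $S:=W(x)$ and $T:=W(vw)$. This yields some $F_i\in\mathcal{C}(r)$, a path $P$ in $F_i$ on $r-1$ vertices whose endpoints both lie in $W(x)$, and a vertex $u\in W(vw)\cap(V(F_i)\setminus V(P))$.

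I would then build two replacement cycles. Form $C_1'$ from $C_1$ by replacing the edge $vw$ with the $2$-path $v,u,w$, so $|C_1'|=|C_1|+1\leq r$. Form $C_2'$ by joining $x$ via edges to the two endpoints of $P$; these edges exist because the endpoints lie in $W(x)\subseteq N(x)$, and $C_2'$ is a cycle of order $(r-1)+1=r$. Replacing $C_1,C_2,F_i$ in $\mathcal{C}$ with $C_1',C_2'$ trades a cycle of order $r$ together with two smaller cycles (of orders $|C_1|$ and $1$) for a cycle of order $r$ and a cycle of order $|C_1|+1\in[2,r]$. A short case split on whether $|C_1|+1=r$ or $|C_1|+1<r$ shows that this strictly improves the lexicographic objective on $\mathcal{C}$, giving the required contradiction.

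The only step that is not already carried out verbatim in the proof of Claim~\ref{claim:firstroute} is the construction of $C_2'$: one must verify that the endpoints of $P$ are adjacent to the isolated vertex $x$, and this is immediate from $W(x)=N(x)\cap V(\mathcal{C}(r))$. Everything else --- the contradiction hypothesis, the invocation of Lemma~\ref{lemma:reroute}, and the bookkeeping on cycle orders --- is a direct transcription of the earlier argument with the parameter $r-|C_2|$ specialised to $r-1$. I therefore expect no genuine obstacle here; the content is exactly the parameter choice $q_i=r-1$ and the observation that an isolated cycle-vertex is automatically glued to any path whose ends lie in its common-neighbour set.
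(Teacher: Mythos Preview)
Your proposal is correct and follows exactly the approach the paper intends: the paper omits the proof, stating it is ``almost identical to the proof of Claim~\ref{claim:firstroute},'' and your argument is precisely that specialisation with $q_i=r-|C_2|=r-1$ and $S=W(x)$ in place of $W(xy)$. The construction of $C_2'$ by attaching $x$ to the endpoints of $P$ is the only genuinely new wrinkle, and you handle it correctly.
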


\begin{claim}
\label{claim:firstrouteb}
Let $C_1,C_2 \in \mathcal{U}$ be cycles such that $V(C_1)=\{v\}$ and $V(C_2) =\{x\}$. At least one of $W(v),W(x)$ contains at most $\tfrac{2}{3}|V(\mathcal{C}(r))|$ vertices.
\end{claim}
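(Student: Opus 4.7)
The plan is to imitate the proof of Claim~\ref{claim:firstroute} almost verbatim, adapting the cycle-reconstruction step to the degenerate situation in which both $C_1$ and $C_2$ are single vertices. Suppose, for contradiction, that $|W(v)| > \tfrac{2}{3}|V(\mathcal{C}(r))|$ and $|W(x)| > \tfrac{2}{3}|V(\mathcal{C}(r))|$. These are precisely the hypotheses needed to invoke Lemma~\ref{lemma:reroute} with $\mathcal{F} := \mathcal{C}(r)$, path length $q_i := r - 1$ in each $F_i$, $S := W(x)$, and $T := W(v)$; this is the specialisation to $|C_2| = 1$ of the applications used in Claims~\ref{claim:firstroute} and~\ref{claim:firstroutea}.

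The lemma then produces an index $i$, a path $P$ on $r-1$ vertices inside $F_i$ whose two endpoints both lie in $W(x)$, and a further vertex $u \in W(v) \cap (V(F_i) \setminus V(P))$. From here I form two new cycles: $C_2'$ is obtained by joining $x$ to the two endpoints of $P$, giving a cycle of order $1 + (r-1) = r$; and $C_1'$ is the $2$-vertex ``cycle'' on $\{v, u\}$, using the paper's convention that $K_2$ is a cycle of order $2$ (the edge $uv$ exists because $u \in W(v) \subseteq N(v)$).

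Replacing $C_1, C_2, F_i$ in $\mathcal{C}$ by $C_1', C_2'$ leaves $|\mathcal{C}(r)|$ unchanged (one $r$-cycle removed, one added), keeps $|\mathcal{C}(j)|$ unchanged for $3 \leq j \leq r-1$, increases $|\mathcal{C}(2)|$ by one, and decreases $|\mathcal{C}(1)|$ by two; under the lexicographic tiebreak used to choose $\mathcal{C}$, this is strictly better than the original collection, a contradiction. The only novelty relative to Claim~\ref{claim:firstroute} is therefore a bookkeeping check: the improvement first manifests at coordinate $j = 2$, so no higher-priority coordinate deteriorates, and I expect this to be the only nontrivial point in the argument.
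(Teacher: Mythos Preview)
Your proof is correct and is exactly the adaptation the paper has in mind: the authors omit the argument because it is ``almost identical to the proof of Claim~\ref{claim:firstroute}'', and your specialisation (taking $q_i=r-1$, building $C_2'$ as the $r$-cycle through $x$ and $P$, and $C_1'$ as the $K_2$ on $\{v,u\}$) is precisely that. The lexicographic bookkeeping you flag is indeed the only point requiring a moment's care, and it goes through as you describe.
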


Say a cycle of $\mathcal{U}$ is \emph{big} if it contains greater than $\tfrac{2}{3}r$ vertices, otherwise it is \emph{small}.

\begin{claim}
\label{claim:twocycles}
The set $\mathcal{U}$ contains at least two big cycles or two small cycles. In particular $|\mathcal{U}| \geq 2$.
\end{claim}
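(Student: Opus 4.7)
My plan is to argue by contradiction: suppose $\mathcal{U}$ contains at most one big cycle and at most one small cycle, so that $|\mathcal{U}|\leq 2$. Since every cycle of $\mathcal{U}$ has order at most $r-1$, this gives the upper bound
\[
|V(\mathcal{U})|\;\leq\;2(r-1)\;=\;2r-2.
\]

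Next I would derive a matching lower bound on $|V(\mathcal{U})|$ purely by counting. Because $G$ is a simple graph, $d(G)\leq |V(G)|-1$, so the hypothesis $d(G)\geq \tfrac{4}{3}kr$ forces $|V(G)|\geq \tfrac{4}{3}kr+1$. By Claim~\ref{claim:truepartition} every vertex of $G$ lies in a cycle of $\mathcal{C}=\mathcal{U}\cup\mathcal{C}(r)$, and these cycles are disjoint, so
\[
|V(\mathcal{U})|\;=\;|V(G)|-r|\mathcal{C}(r)|\;\geq\;\tfrac{4}{3}kr+1-r(k-1)\;=\;\tfrac{1}{3}kr+r+1,
\]
using $|\mathcal{C}(r)|\leq k-1$, which was established just before the key claims.

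Finally I would check that the two bounds are incompatible for the allowed parameter range. For $k\geq 6$ the lower bound is at least $2r+r+1=3r+1$, which exceeds $2r-2$ for every $r\geq 3$, contradicting the upper bound. The ``in particular'' clause is immediate, since two big or two small cycles trivially force $|\mathcal{U}|\geq 2$.

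There is no real obstacle here: the claim is a vertex-count that sets up the main argument, and it relies only on the trivial inequality $d(G)\leq |V(G)|-1$ for simple graphs, Claim~\ref{claim:truepartition}, and the earlier observation $|\mathcal{C}(r)|\leq k-1$. None of the technical lemmas from Section~\ref{sec:anc} are needed.
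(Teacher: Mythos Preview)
Your proof is correct and follows essentially the same vertex-counting approach as the paper: assume at most one big and one small cycle in $\mathcal{U}$, bound $|V(G)|$ from above via $|\mathcal{C}(r)|\leq k-1$ and Claim~\ref{claim:truepartition}, and contradict $d(G)\leq |V(G)|-1$. The only cosmetic difference is that the paper uses the sharper bound $|V(\mathcal{U})|\leq (r-1)+\tfrac{2}{3}r$ (exploiting that a small cycle has at most $\tfrac{2}{3}r$ vertices), whereas you use the cruder $|V(\mathcal{U})|\leq 2(r-1)$; since the inequality has plenty of slack for $k\geq 6$, both versions go through.
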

\begin{proof}
Assume for the sake of a contradiction that $\mathcal{U}$ contains at most one big cycle and one small cycle. By Claim~\ref{claim:truepartition}, $V(G) = V(\mathcal{U}) \cup V(\mathcal{C}(r))$. The cycles of $\mathcal{C}(r)$ contain at most $(k-1)r$ vertices in total. Every big cycle of $U$ contains at most $r-1$ vertices, and every small cycle contains at most $\tfrac{2}{3}r$ vertices. Thus $|V(G)| \leq (k-1)r + r -1 + \tfrac{2}{3}r=(k+\tfrac{2}{3})r -1$. Hence $\tfrac{4}{3}kr \leq d(G) \leq (k+\tfrac{2}{3})r-2$, implying $\tfrac{1}{3}kr \leq \tfrac{2}{3}r - 2$. This is a contradiction as $k \geq 2$.
\end{proof}

%Let $C_1$ be the largest cycle of $\mathcal{U}$ and $C_2$ the second largest. If $|V(C_1)| \geq 2$, choose an edge $vw$ of $C_1$ and let $A := W(vw)$. Otherwise, let $V(C_1) = \{v\}$ and $A := W(v)$. Similarly, if $C_2$ contains an edge, choose an edge $xy$ and let $B:=W(xy)$, otherwise let $V(C_2) = \{x\}$ and $B:=W(x)$.
%
\begin{claim}
\label{claim:kminus1cycles}
$|\mathcal{C}(r)| = k-1$.
\end{claim}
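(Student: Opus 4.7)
The plan is to argue by contradiction: suppose $|\mathcal{C}(r)| \leq k-2$, so that $|V(\mathcal{C}(r))| \leq (k-2)r$, and then exhibit an edge or vertex of $\mathcal{U}$ whose common-neighbour count cannot split consistently between $V(\mathcal{U})$ and $V(\mathcal{C}(r))$. Claims~\ref{claim:firstroute}, \ref{claim:firstroutea}, and~\ref{claim:firstrouteb} supply the required ``routing'' statements, so the proof will be a case analysis driven by how many cycles of $\mathcal{U}$ have size at least~$2$.

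Let $m := |\{C \in \mathcal{U} : |C| \geq 2\}|$. When $m \geq 1$, I would fix $C_1 \in \mathcal{U}$ to be a largest cycle, so $|C_1| \geq 2$. For any edge $vw$ of $C_1$, Claim~\ref{claim:upstairstriangle}, applied to each other cycle $D \in \mathcal{U}$ (all satisfying $|D| \leq |C_1|$), rules out any common neighbour of $v, w$ in $V(\mathcal{U}) \setminus V(C_1)$, and at most $|C_1| - 2$ common neighbours can lie inside $V(C_1)$. Combining this with minimality ($\tau(G) > \tfrac{2}{3}kr - 1$) and $|C_1| \leq r-1$ gives
\[
|W(vw)| \;\geq\; \tau(G) - (|C_1| - 2) \;>\; \tfrac{2}{3}kr - r + 2 \;>\; \tfrac{2}{3}(k-2)r \;\geq\; \tfrac{2}{3}|V(\mathcal{C}(r))|.
\]

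I would then split on $m$. If $m \geq 2$, pick $C_2$ to be a largest cycle in $\{C \in \mathcal{U} \setminus \{C_1\} : |C| \geq 2\}$, so that $C_1$ is the only cycle of $\mathcal{U}$ of size strictly greater than $|C_2|$. Claim~\ref{claim:firstroute} together with the displayed inequality then forces $|W(xy)| \leq \tfrac{2}{3}|V(\mathcal{C}(r))|$ for every edge $xy$ of $C_2$, so $x$ and $y$ share more than $\tfrac{4}{3}r - 1$ common neighbours in $V(\mathcal{U})$. Yet Claims~\ref{claim:upstairstriangle} and~\ref{claim:otherupstairstriangle} bound this count above by $(|C_2| - 2) + \tfrac{1}{3}|C_1|$, which is at most $\tfrac{4r-10}{3} < \tfrac{4}{3}r - 1$ since $|C_1|, |C_2| \leq r-1$, a contradiction. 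If $m = 1$, then $C_1$ is the unique cycle of size $\geq 2$ and Claim~\ref{claim:twocycles} supplies a singleton $x \in \mathcal{U}$; Claim~\ref{claim:firstroutea} forces $|W(x)| \leq \tfrac{2}{3}|V(\mathcal{C}(r))|$, and Claim~\ref{claim:nolowedge} restricts $N(x) \cap V(\mathcal{U})$ to $V(C_1)$, giving $d(x) \leq (r-1) + \tfrac{2}{3}(k-2)r < \tfrac{2}{3}kr < d(x)$, a contradiction. If $m = 0$, then $V(\mathcal{U})$ is an independent set of at least two singletons by Claims~\ref{claim:twocycles} and~\ref{claim:nolowedge}; applying Claim~\ref{claim:firstrouteb} to any two of them yields a singleton $v$ with $d(v) = |W(v)| \leq \tfrac{2}{3}(k-2)r < \tfrac{2}{3}kr$, again contradicting $d(v) > \tfrac{2}{3}kr$.

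The hardest step will be the case $m \geq 2$: $C_2$ must be chosen as the largest cycle of size $\geq 2$ other than $C_1$, so that the sum from Claim~\ref{claim:otherupstairstriangle} collapses to the single term $\tfrac{1}{3}|C_1|$. A looser choice would permit extra $\tfrac{1}{3}|D|$ contributions from intermediate-size cycles, and the already narrow gap between the $\tfrac{4}{3}r - 1$ lower bound and the $\tfrac{4r-10}{3}$ upper bound would no longer yield a contradiction.
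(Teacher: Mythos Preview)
Your proof is correct and follows essentially the same approach as the paper: assume $|\mathcal{C}(r)|\le k-2$, take the two largest cycles of $\mathcal{U}$, and show that the relevant $W$-sets both exceed $\tfrac{2}{3}|V(\mathcal{C}(r))|$, contradicting Claims~\ref{claim:firstroute}--\ref{claim:firstrouteb}. The only cosmetic differences are that you organise the case split by $m=|\{C\in\mathcal{U}:|C|\ge 2\}|$ and phrase the contradiction via the contrapositive of those claims (forcing $|W(xy)|$ or $|W(x)|$ small and then overcounting inside $V(\mathcal{U})$), whereas the paper bounds both $|A|$ and $|B|$ directly; your $m=1$ bound $|N(x)\cap V(C_1)|\le r-1$ is looser than the paper's $\tfrac12|C_1|$ but still suffices.
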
  
\begin{proof}
Assume $|\mathcal{C}(r)| \leq k-2$, for the sake of a contradiction. Thus $|V(\mathcal{C}(r))| \leq (k-2)r$. Let $C_1,C_2$ be the two largest cycles of $\mathcal{U}$ such that $|C_1| \geq |C_2|$. If $|C_1| \geq 2$, let $vw$ be an edge of $C_1$ and let $A:=W(vw)$. If $|C_1|=1$, then let $v$ be the only vertex of $C_1$ and let $A:=W(v)$. Similarly, let $xy$ be an edge of $C_2$ and set $B:=W(xy)$, unless $C_2$ contains only one vertex $x$ and $B:=W(x)$. We now show $|A|,|B| > \tfrac{2}{3}|V(\mathcal{C}(r))|$, which contradicts one of Claims~\ref{claim:firstroute}, \ref{claim:firstroutea} or \ref{claim:firstrouteb}.

If $|C_1|=1$, then every cycle in $\mathcal{U}$ is a $1$-cycle, and by Claim~\ref{claim:nolowedge}, $V(\mathcal{U})$ is an independent set, implying $A=N(v)$ and $|A| \geq \delta(G) > \tfrac{1}{2}d(G) > \tfrac{2}{3}|V(\mathcal{C}(r))|$, as desired. Otherwise, since $C_1$ is the largest cycle in $\mathcal{U}$, Claim~\ref{claim:upstairstriangle} implies $A$ contains all common neighbours of $v$ and $w$ except those in $C_1$ itself. Hence $|A| \geq \tau(G) - |C_1| + 2 > \tfrac{1}{2}d(G) - 1 - (r-1) + 2 = \tfrac{1}{2}d(G) -r + 2 \geq \tfrac{2}{3}kr - r +2 = \tfrac{2}{3}(k-2)r + \tfrac{4}{3}r -r+2 > \tfrac{2}{3}|V(\mathcal{C}(r))|$, as desired.

If $|C_2|=1$, then by Claim~\ref{claim:nolowedge}, $B$ contains all neighbours of $x$ except those in $C_1$. If $x$ is adjacent to more than $\tfrac{1}{2}|C_1|$ vertices of $C_1$, then two of those neighbours are themselves adjacent, contradicting Claim~\ref{claim:upstairstriangle}. Hence $|B| \geq \delta(G) - \tfrac{1}{2}|C_1| > \tfrac{2}{3}kr - \tfrac{1}{2}r = \tfrac{2}{3}(k-2)r + \tfrac{4}{3}r - \tfrac{1}{2}r > \tfrac{2}{3}|V(\mathcal{C}(r))|$, as desired. Alternatively, if $|C_2| \geq 2$, then $B$ contains all common neighbours of $x$ and $y$ except those in $C_1$ and $C_2$ (by Claim~\ref{claim:upstairstriangle}). The cycle $C_1$ contains at most $\tfrac{1}{3}|C_1|$ common neighbours of $x$ and $y$ by Claim~\ref{claim:otherupstairstriangle}, and $C_2$ contains at most $|C_2|-2$ common neighbours. Thus $|B| \geq \tau(G) - \tfrac{1}{3}|C_1| - |C_2| + 2 > \tfrac{2}{3}kr - \tfrac{4}{3}r + 2 \geq \tfrac{2}{3}(k-2)r+2 > \tfrac{2}{3}|V(\mathcal{C}(r))|$, as desired. This gives our desired contradiction.
%
%We now invoke Lemma~\ref{lemma:reroute}, where $\mathcal{C}(r)$ is the set of required cycles (which we denote by $F_1,\dots,F_{\mathcal{C}(r)}$) and $z \leq (k-2)r$ (as $|\mathcal{C}(r)| \leq k-2$).  Let $q_i = r - |V(C_2)|$ for each $i$; as $|V(C_2)| \leq r-1$ it follows $q_i \geq 1$, and as $|V(C_2)| \geq 1$ it follows $q_i \leq r-1 = |V(F_i)|-1$. Let $S=B$ and $T=A$, which are both large enough by our above lower bounds. Hence there exists some cycle $F_i \in \mathcal{C}(r)$ containing an $(r-|V(C_2)|)$-vertex path (denoted $P$) with both ends in $B$ and a vertex of $A$ which is not in $P$, denoted $u$.
%
%Let $C_1'$ be the cycle formed from $C_1$ by removing the edge $vw$ and adding the path $v,u,w$. Let $C_2'$ be the cycle formed from $C_2$ by removing the edge $xy$ and adding the path $x,P,y$. Now $|V(C_1')|=|V(C_1)|+1$ and $|V(C_2')|=|V(C_2)|+|V(P)|=|V(C_2)|+r-|V(C_2)|=r$. Replace $F_i,C_1,C_2 \in \mathcal{C}$ with $C_1'$ and $C_2'$. This maintains $|\mathcal{C}(r)|$ while increasing $|\mathcal{C}(|V(C_1)|+1)|$, and only lowers $|\mathcal{C}(j)|$ for $j \leq |V(C_1)|$. This contradicts our initial choice of $\mathcal{C}$.
\end{proof}

Claims~\ref{claim:firstroute}, \ref{claim:firstroutea} or \ref{claim:firstrouteb} show how we apply Lemma~\ref{lemma:reroute}; we now show how we use Lemma~\ref{lemma:otherreroute}. Note that since $\tfrac{2}{3}r \geq 2$, each big cycle contains an edge.

\begin{claim}
\label{claim:secondroute}
Let $C_1,C_2 \in \mathcal{U}$ be big cycles such that $|C_1| \geq |C_2|$, and let $vw,xy$ be edges of $C_1,C_2$ respectively. If $r \geq 4$, then at least one of $W(vw)$ and $W(xy)$ contains at most $\tfrac{2}{3}(k-1)r - \tfrac{1}{3}r$ vertices.
\end{claim}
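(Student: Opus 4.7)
The plan is to mirror the structure of Claim~\ref{claim:firstroute}, but since both cycles are big we can afford to produce \emph{two} new $r$-cycles at once, using disjoint paths inside a single $F_i \in \mathcal{C}(r)$, rather than just one. Suppose for contradiction that $|W(vw)| > \tfrac{2}{3}(k-1)r - \tfrac{1}{3}r$ and $|W(xy)| > \tfrac{2}{3}(k-1)r - \tfrac{1}{3}r$. I would apply Lemma~\ref{lemma:otherreroute} with $\mathcal{F} := \mathcal{C}(r)$, required path lengths $r-|C_1|$ and $r-|C_2|$ inside each $F_i$, and endpoint sets $S := W(vw)$, $T := W(xy)$. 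The intended output is a single $F_i \in \mathcal{C}(r)$ containing two vertex-disjoint paths $P_1, P_2$, with $|P_1| = r-|C_1|$ and both endpoints of $P_1$ in $W(vw)$, and $|P_2| = r-|C_2|$ with both endpoints of $P_2$ in $W(xy)$. Because $C_1, C_2$ are big we have $|P_1| + |P_2| = 2r - |C_1| - |C_2| < \tfrac{2}{3}r < r$, so there is comfortably enough room for both paths inside a cycle of order $r$.

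Given the two paths, the cycle surgery is immediate. Let $p_1, p_1' \in W(vw)$ be the endpoints of $P_1$ and let $p_2, p_2' \in W(xy)$ be the endpoints of $P_2$. I would form $C_1'$ from $C_1$ by deleting the edge $vw$ and inserting the path $v, p_1, P_1, p_1', w$, so that $|C_1'| = |C_1| + (r - |C_1|) = r$. Form $C_2'$ analogously using $P_2$, giving $|C_2'| = r$. Because $C_1, C_2, F_i$ are pairwise disjoint in $G$ and $P_1, P_2$ are disjoint within $F_i$, the cycles $C_1'$ and $C_2'$ are vertex-disjoint from one another and from every cycle in $\mathcal{C}(r) \setminus \{F_i\}$.

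To conclude, the collection $(\mathcal{C}(r) \setminus \{F_i\}) \cup \{C_1', C_2'\}$ consists of $(k-1) - 1 + 2 = k$ pairwise disjoint cycles, each of order exactly $r$, which directly contradicts the standing assumption that $G$ contains no $k$ disjoint cycles of order at least $r$. The hard part will be the application of Lemma~\ref{lemma:otherreroute} itself: the threshold $\tfrac{2}{3}(k-1)r - \tfrac{1}{3}r$ is presumably calibrated exactly so that when both $|W(vw)|$ and $|W(xy)|$ exceed it, some single $F_i \in \mathcal{C}(r)$ must contain the required two disjoint paths simultaneously, rather than the two desired paths being split across different members of $\mathcal{C}(r)$. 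This is the two-path strengthening of the single-path extraction used in Claim~\ref{claim:firstroute}, and verifying that the averaging and endpoint bookkeeping in Lemma~\ref{lemma:otherreroute} match these parameters is the only non-routine ingredient.
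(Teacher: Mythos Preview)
Your overall intuition---use a single $F_i\in\mathcal{C}(r)$ to feed two disjoint paths into $C_1$ and $C_2$ and thereby produce $k$ long cycles---is exactly the paper's strategy. But your interface with Lemma~\ref{lemma:otherreroute} is wrong in two ways, and as written the argument does not go through.

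First, the lemma has a \emph{disjunctive} conclusion with two possible outcomes, and you only address (a garbled version of) the second. The first outcome is identical to that of Lemma~\ref{lemma:reroute}: a single path $P$ of exactly $q_i$ vertices in some $F_i$ with both endpoints in $S$, together with one vertex of $T$ in $V(F_i)-V(P)$. When this outcome fires, you cannot build two long paths; instead you must run the argument of Claim~\ref{claim:firstroute} verbatim (replace $xy$ by the path $P$ to get $|C_2'|=r$, and replace $vw$ by the single vertex of $T$ to get $|C_1'|=|C_1|+1$), which already contradicts the choice of $\mathcal{C}$. This case must be handled, and it dictates the choice $q_i:=r-|C_2|$ and $S:=W(xy)$, $T:=W(vw)$---your assignments are swapped, and there is no second parameter ``$r-|C_1|$'' to pass.

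Second, the second outcome of the lemma does \emph{not} give paths of prescribed exact lengths $r-|C_1|$ and $r-|C_2|$; it gives disjoint paths $P,Q$ in some $F_i$ with endpoints in $S,T$ respectively and $|P|,|Q|>\tfrac{1}{3}r$. Splicing $Q$ into $C_1$ and $P$ into $C_2$ then yields $|C_1'|,|C_2'|>\tfrac{2}{3}r+\tfrac{1}{3}r=r$, so the new cycles have order strictly greater than $r$, not exactly $r$. That is still a contradiction to the standing assumption on $G$, but your length bookkeeping (``$|C_1'|=r$'') and your remark that $|P_1|+|P_2|<\tfrac{2}{3}r$ are both based on a misreading of what the lemma supplies. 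Once you correct the lemma's output and treat both outcomes, the proof is essentially the paper's.
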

\begin{proof}
Assume for the sake of a contradiction that $r \geq 4$, $|W(vw)| > \tfrac{2}{3}(k-1)r - \tfrac{1}{3}r$ and $|W(xy)| > \tfrac{2}{3}(k-1)r - \tfrac{1}{3}r$. Apply Lemma~\ref{lemma:otherreroute} with $\mathcal{F} := \mathcal{C}(r)$, $q_i := r-|C_2|$ for $i=1,\dots,k-1$, $S:=W(xy)$ and $T:=W(vw)$. We must ensure that $1 \leq q_i < \tfrac{1}{3}r$ for $i=1,\dots,k-1$ to be able to apply this.
Since $|C_2| \leq r-1$, it follows $q_i \geq 1$. Since $C_2$ is big, $|C_2| > \tfrac{2}{3}r$, and thus $q_i < \tfrac{1}{3}r$, as required.

The first outcome of Lemma~\ref{lemma:otherreroute} is identical to the result of Lemma~\ref{lemma:reroute}, and as such we get a contradiction by the same argument as in Claim~\ref{claim:firstroute}. Thus we only consider the second outcome. Let $F_i$ be the cycle of $\mathcal{C}(r)$ containing $P$ and $Q$. Construct the cycle $C_1'$ by removing the edge $vw$ from $C_1$ and adding the path $Q$, together with an edge from $v$ to one end vertex of $Q$ and an edge from $w$ to the other end vertex. Similarly, construct $C_2'$ from $C_2$ by removing the edge $xy$ and adding the path $P$, an edge from $x$ to one end vertex of $P$ and an edge from $y$ to the other end vertex of $P$. Thus $|C_1'|,|C_2'| > \tfrac{2}{3}r + \tfrac{1}{3}r = r$. Hence $(\mathcal{C}(r) - \{F_i\}) \cup \{C_1',C_2'\}$ is a set of $k$ cycles, where $k-2$ have order exactly $r$ and two have order $> r$. This contradicts our initial assumption about $G$.
\end{proof}

The following claim will also be helpful.

\begin{claim}
\label{claim:restrictneighbours}
Let $C_1,C_2 \in \mathcal{U}$ be cycles such that $|C_1| \geq |C_2| \geq 2$ and let $vw$ be an edge of $C_2$. If $v,w$ have $q \geq 2$ common neighbours in $V(C_1)$ then $|C_2| \leq \tfrac{|C_1|}{q}-1$.
\end{claim}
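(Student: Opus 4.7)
My plan is to assume for contradiction that $|C_2|+1 > |C_1|/q$ and use the $q$ common neighbours to rebuild $\mathcal{C}$ in a way that either contradicts its lex-optimal choice or directly exhibits $k$ disjoint cycles of order at least $r$ in $G$. This generalises Claim~\ref{claim:otherupstairstriangle}, where one replaces a $2$-edge arc of $C_1$ by a $3$-edge detour through $v,w$; now the detour will instead go all the way around $C_2$, absorbing $C_2$ into a single enlarged cycle.

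First I would list the common neighbours $u_1,\dots,u_q$ of $v,w$ in $C_1$ in cyclic order; they split $C_1$ into arcs of lengths $a_1,\dots,a_q$ summing to $|C_1|$. Averaging produces an arc (WLOG between $u_1$ and $u_2$) of length $a \leq \lfloor |C_1|/q \rfloor < |C_2|+1$, so $a \leq |C_2|$ by integrality. If $a=1$ then $u_1u_2$ is an edge of $C_1$ with common neighbour $v \in V(C_2)$ and $|C_2| \leq |C_1|$, contradicting Claim~\ref{claim:upstairstriangle}; so $a \geq 2$. Then I would form $D$ by deleting the $a-1$ interior vertices of the short arc from $C_1$ and reconnecting $u_1$ to $u_2$ through the path $u_1,v,\dots,w,u_2$, whose middle segment is the path from $v$ to $w$ in $C_2 \setminus vw$ (or the edge $vw$ itself when $|C_2|=2$). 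A direct vertex count gives $|D| = |C_1| + |C_2| - a + 1 \geq |C_1| + 1$.

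Finally I would split into cases. If $|D| \geq r$, then $D$ is disjoint from $\mathcal{C}(r)$, so by Claim~\ref{claim:kminus1cycles} the set $\{D\} \cup \mathcal{C}(r)$ is a collection of $k$ pairwise disjoint cycles of order at least $r$ in $G$, contradicting the standing assumption about $G$. Otherwise $|D| \leq r-1$; replacing $C_1,C_2$ in $\mathcal{C}$ by $D$ yields a collection that agrees with $\mathcal{C}$ at every size larger than $|D|$ but has one more cycle at size $|D|$, and since $|D| > |C_1| \geq |C_2|$, this new collection is lexicographically greater than $\mathcal{C}$ in the priority order $(|\mathcal{C}(r)|,\dots,|\mathcal{C}(1)|)$, contradicting the choice of $\mathcal{C}$. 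The one place I anticipate any wrinkle is the boundary case $|C_2|=2$, where $C_2$ is simply an edge and the ``path in $C_2 \setminus vw$'' degenerates; the construction still goes through using $v,w$ together with the edge $vw$ itself, and everything else reduces to routine bookkeeping about the lex order.
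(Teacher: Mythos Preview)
Your argument is correct and is essentially the paper's proof, reformulated as a proof by contradiction. The paper picks the shortest clockwise segment $P_i$ between consecutive common neighbours (with $\sum_i |P_i| = |C_1|+q$), replaces its interior by the long $v$--$w$ path through $C_2$, and then either obtains a larger cycle (contradicting optimality of $\mathcal{C}$ or the assumption on $G$) or, in the surviving case, reads off $|C_2| \le |P_i|-2 \le |C_1|/q-1$ directly; your version assumes $|C_2|+1 > |C_1|/q$ up front and shows every branch leads to a contradiction, which is the same computation rearranged. Two minor remarks: your separate treatment of $a=1$ via Claim~\ref{claim:upstairstriangle} is unnecessary, since the construction of $D$ goes through verbatim with zero interior vertices deleted, and your appeal to Claim~\ref{claim:kminus1cycles} in the $|D|\ge r$ case is exactly what the paper does implicitly when it invokes ``the $k-1$ cycles of $\mathcal{C}(r)$''.
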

\begin{proof}
Label the common neighbours of $v,w$ in $V(C_1)$ clockwise by $x_1,\dots,x_q$. Let $P_i$ be the clockwise path from $x_i$ to $x_{i+1}$ inclusive (where $i+1$ is taken modulo $q$.) Hence $\sum_{i=1}^{q} |P_i| = |C_1|+q$, and thus we may fix $i$ so that $P_i$ contains at most $\tfrac{|C_1|+q}{q}$ vertices. Let $Q$ denote the path between $v$ and $w$ in $C_2$ that contains all vertices of $C_2$.

Construct a cycle $C_1'$ from $C_1$ by removing the interior vertices of $P_i$ and adding the path $Q$ and the edges $vx_i$ and $wx_{i+1}$. If $|C_1'| > r$, then $C_1'$ together with the $k-1$ cycles of $\mathcal{C}(r)$ contradict our initial assumption about $G$. Now assume $|C_1'| \leq r$. If $|C_1'| > |C_1|$, then replacing $C_1,C_2$ in $\mathcal{C}$ with $C_1'$ gives a better choice of $\mathcal{C}$. Otherwise $|C_1'| \leq |C_1|$. Hence $|C_1| - (|P_i|-2) + |C_2| \leq |C_1|$, implying $|C_2| \leq |P_i| - 2 \leq \tfrac{|C_1|}{q}-1.$
\end{proof}

%From here Tuesday
\begin{claim}
\label{claim:notwobig}
There is at most one big cycle in $\mathcal{U}$. %If $r=3$, then there are no big cycles in $\mathcal{U}$. %Put this in or not?
\end{claim}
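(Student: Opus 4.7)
The strategy is to apply Claim~\ref{claim:secondroute} for a contradiction; the case $r = 3$ is disposed of immediately because cycles in $\mathcal{U}$ have order at most $r-1 = 2 = \tfrac{2}{3}r$ and ``big'' requires strictly more than $\tfrac{2}{3}r$ vertices, so no big cycle exists at all. For the remaining case $r \geq 4$, assume for contradiction that $\mathcal{U}$ contains two big cycles; pick $C_1$ to be a largest and $C_2$ to be a second-largest cycle of $\mathcal{U}$, so that $|C_1| \geq |C_2| > \tfrac{2}{3}r$ and every cycle in $\mathcal{U} \setminus \{C_1\}$ has order at most $|C_2|$. Since each of $C_1, C_2$ has more than $\tfrac{2}{3}r \geq 2$ vertices, pick edges $vw$ of $C_1$ and $xy$ of $C_2$. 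The aim is to show $|W(vw)|, |W(xy)| > \tfrac{2}{3}(k-1)r - \tfrac{1}{3}r = \tfrac{2}{3}kr - r$, contradicting Claim~\ref{claim:secondroute}.

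The bound on $|W(vw)|$ is easy: since $C_1$ is the largest cycle, every other cycle in $\mathcal{U}$ has order at most $|C_1|$, and Claim~\ref{claim:upstairstriangle} forbids $v,w$ from sharing a common neighbour with any such cycle. Common neighbours of $v,w$ in $V(\mathcal{U})$ therefore all lie in $C_1$, contributing at most $|C_1|-2 \leq r-3$. Combined with the minimal-graph bound $\tau(G) > \tfrac{1}{2}d(G) - 1 \geq \tfrac{2}{3}kr - 1$ and Claim~\ref{claim:truepartition}, this yields $|W(vw)| > \tfrac{2}{3}kr - r + 2$, comfortably above the threshold.

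The main obstacle is the bound on $|W(xy)|$, because common neighbours of $x,y$ could now also lie in the larger cycle $C_1$. The decisive step, where the bigness of $C_2$ finally bites, is to show there is at most one such common neighbour in $C_1$: by Claim~\ref{claim:restrictneighbours}, $q \geq 2$ common neighbours would force $|C_2| \leq |C_1|/q - 1 \leq (r-3)/2 < \tfrac{2}{3}r$, contradicting that $C_2$ is big. Claim~\ref{claim:upstairstriangle} applied to $xy$ forbids common neighbours in any cycle of $\mathcal{U} \setminus \{C_1, C_2\}$ (they all have order at most $|C_2|$), and inside $C_2$ there are at most $|C_2|-2$ common neighbours. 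So in total at most $1 + (|C_2|-2) \leq r-2$ common neighbours lie in $V(\mathcal{U})$, giving $|W(xy)| > \tfrac{2}{3}kr - r + 1 > \tfrac{2}{3}kr - r$. Both bounds now exceed the threshold, so Claim~\ref{claim:secondroute} completes the contradiction.
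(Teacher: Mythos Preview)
Your proof is correct and follows essentially the same route as the paper: dispose of $r=3$ by observing that no big cycle can exist, and for $r\geq 4$ take the two largest cycles $C_1,C_2$, use Claim~\ref{claim:upstairstriangle} to confine common neighbours of $vw$ to $C_1$ and of $xy$ to $C_1\cup C_2$, invoke Claim~\ref{claim:restrictneighbours} to cap common neighbours of $x,y$ in $C_1$ at one, and then verify both $|W(vw)|$ and $|W(xy)|$ exceed $\tfrac{2}{3}(k-1)r-\tfrac{1}{3}r$ to contradict Claim~\ref{claim:secondroute}. The arithmetic and the structure of the argument match the paper's proof line by line.
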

\begin{proof}
First suppose that $r \geq 4$. Let $C_1$ and $C_2$ be the two largest cycles in $\mathcal{U}$ such that $|C_1| \geq |C_2|$. Assume for the sake of a contradiction that $C_1$ and $C_2$ are both big. Recall both must contain an edge, and so let $A:=W(vw)$ and $B:=W(xy)$, where $vw,xy$ are edges of $C_1,C_2$ respectively. We now show that $|A|,|B| > \tfrac{2}{3}(k-1)r - \tfrac{1}{3}r$ which contradicts Claim~\ref{claim:secondroute}.

By Claim~\ref{claim:upstairstriangle}, $A$ contains every common neighbour of $v$ and $w$ except those in $C_1$ itself, of which there are at most $r-3$. Hence $|A| \geq \tau(G) - r +3 > \tfrac{2}{3}kr -1 - r+3 = \tfrac{2}{3}(k-1)r - \tfrac{1}{3}r+2$.

By Claim~\ref{claim:upstairstriangle}, $B$ contains every common neighbour of $x$ and $y$ except those in $C_1$ and in $C_2$. If there are at least two common neighbours of $x$ and $y$ in $C_1$, then by Claim~\ref{claim:restrictneighbours}, $|C_2| \leq \tfrac{|C_1|}{2}-1 < \tfrac{r}{2}-1$. However $|C_2| > \tfrac{2}{3}r$, so this gives a contradiction. Hence $x$ and $y$ have at most one common neighbour in $C_1$, and at most $r-3$ common neighbours in $C_2$. Thus $|B| \geq \tau(G) - 1 - r+3 > \tfrac{2}{3}(k-1)r - \tfrac{1}{3}r+1$, which is sufficient. 

Finally, when $r=3$, every cycle in $\mathcal{U}$ has at most $r-1=2$ vertices, but every big cycle in $\mathcal{U}$ contains greater than $\tfrac{2}{3}r = 2$ vertices, and thus there are no big cycles in $\mathcal{U}$.
\end{proof}

Claims~\ref{claim:twocycles} and \ref{claim:notwobig} together imply that $\mathcal{U}$ contains at most one big cycle and at least two small cycles.

\begin{claim}
\label{claim:Aisbig}
Let $C_1$ be the largest small cycle. Suppose $|C_1| \geq 2$, and let $vw$ be an edge of $C_1$. Then $|W(vw)| > \tfrac{2}{3}(k-1)r$.
\end{claim}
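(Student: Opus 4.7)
The plan is to lower-bound $|W(vw)|$ via
\[
|W(vw)| = |N(v)\cap N(w)| - |N(v)\cap N(w)\cap V(\mathcal{U})| \geq \tau(G) - |N(v)\cap N(w)\cap V(\mathcal{U})|,
\]
where $\tau(G) > \tfrac{2}{3}kr - 1$ by minimality of $G$. Since $|V(\mathcal{C}(r))| = (k-1)r$ by Claim~\ref{claim:kminus1cycles}, the target $\tfrac{2}{3}(k-1)r$ is precisely $\tfrac{2}{3}|V(\mathcal{C}(r))|$, the same threshold used in Claims~\ref{claim:firstroute}--\ref{claim:firstrouteb}. So the estimate reduces to bounding the common-neighbour count inside $V(\mathcal{U})$ by roughly $\tfrac{2}{3}r$.

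Next I would account for common neighbours of $v,w$ cycle-by-cycle inside $\mathcal{U}$. Inside $C_1$ itself, at most $|C_1|-2$ vertices qualify. By Claim~\ref{claim:upstairstriangle} no cycle of $\mathcal{U}\setminus\{C_1\}$ with order at most $|C_1|$ contributes a common neighbour; since $C_1$ is the largest small cycle, this leaves only the (unique, by Claim~\ref{claim:notwobig}) big cycle $B$ of $\mathcal{U}$ as a possible further contributor. I would then split on the number $q$ of common neighbours of $v,w$ in $B$. If $q \leq 1$ (including the case $B$ does not exist), summing gives $|W(vw)| > \tfrac{2}{3}kr - |C_1| \geq \tfrac{2}{3}(k-1)r$, using $|C_1| \leq \tfrac{2}{3}r$ since $C_1$ is small. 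If $q \geq 2$, I would apply Claim~\ref{claim:restrictneighbours} with $B$ as the larger cycle and $C_1$ as the smaller (valid since $|B| > \tfrac{2}{3}r \geq |C_1| \geq 2$), obtaining both $q \leq |B|/(|C_1|+1)$ and $|C_1| \leq (|B|-2)/2 \leq (r-3)/2$. Plugging in and using $|B| \leq r-1$, it suffices to verify that $g(m) := m + (r-1)/(m+1)$ stays below $\tfrac{2}{3}r + 1$ on $m \in [2,(r-3)/2]$; since $g$ is convex, only the two endpoints $g(2) = (r+5)/3$ and $g((r-3)/2) = (r+1)/2$ need checking, and both fall below $(2r+3)/3$ for $r \geq 3$.

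The main obstacle is that the direct bound via Claim~\ref{claim:otherupstairstriangle}---which would give only $\tfrac{1}{3}|B|$ common neighbours in $B$---is too weak: in the regime $|C_1| \approx \tfrac{2}{3}r$ and $|B| \approx r-1$ the sum $|C_1| + \tfrac{1}{3}|B|$ can reach $r - \tfrac{1}{3}$, exceeding $\tfrac{2}{3}r + 1$ once $r \geq 4$. The essential input is the sharper trade-off of Claim~\ref{claim:restrictneighbours}: if $v,w$ genuinely share $q \geq 2$ neighbours in $B$, then $|C_1|$ is forced to shrink like $|B|/q - 1$, and these two bounds balance exactly tightly enough to clear the target.
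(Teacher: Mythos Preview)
Your proof is correct and rests on the same core ingredients as the paper's: the $\tau(G)$ bound, Claim~\ref{claim:upstairstriangle} to rule out contributions from smaller cycles of $\mathcal{U}$, and Claim~\ref{claim:restrictneighbours} to control the contribution from the unique big cycle. The organization differs, though. The paper first splits on whether $|C_1|\le \tfrac{1}{3}r+1$; in the small-$|C_1|$ regime it invokes Claim~\ref{claim:otherupstairstriangle} to bound the big-cycle contribution by $\tfrac{1}{3}|C_0|$, and only in the large-$|C_1|$ regime does it turn to Claim~\ref{claim:restrictneighbours}, then further splitting on $q\le 1$, $q=2$, $q\ge 3$. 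You instead split directly on $q\le 1$ versus $q\ge 2$ and never use Claim~\ref{claim:otherupstairstriangle} at all, replacing the paper's three-way sub-split with a single convexity estimate for $g(m)=m+(r-1)/(m+1)$ on $[2,(r-3)/2]$. Your route is a little more uniform and makes explicit why the cruder $\tfrac{1}{3}|C_0|$ bound is insufficient near $|C_1|\approx\tfrac{2}{3}r$; the paper's route is slightly more elementary in that it avoids any calculus.
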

\begin{proof}
If there is a big cycle then, by Claim~\ref{claim:notwobig}, there is only one; label such a cycle $C_0$. By Claim~\ref{claim:upstairstriangle}, $W(vw)$ contains all common neighbours of $v$ and $w$ except those in $C_0$ and $C_1$. 

If $|C_1| \leq \tfrac{1}{3}r+1$, then there are at most $\tfrac{1}{3}|C_0|$ common neighbours of $v$ and $w$ in $C_0$ (by Claim~\ref{claim:otherupstairstriangle}) and at most $\tfrac{1}{3}r-1$ common neighbours in $C_1$. Thus $|W(vw)| > \tau(G) - \tfrac{1}{3}|C_0| - (\tfrac{1}{3}r-1) \geq \tfrac{2}{3}kr -1 - \tfrac{1}{3}|C_0| - \tfrac{1}{3}r+1 > \tfrac{2}{3}(k-1)r + \tfrac{2}{3}r - \tfrac{2}{3}r$, which is sufficient.

Otherwise $\tfrac{1}{3}r+1 < |C_1|$. Let $q$ be the number of common neighbours of $v$ and $w$ in $C_0$. 
If $q \geq 3$, then by Claim~\ref{claim:restrictneighbours}, $|C_1| \leq \tfrac{|C_0|}{q} - 1 = \tfrac{1}{3}r-1$, which is a contradiction. Hence $q \leq 2$. 
If $q=2$, then (by Claim~\ref{claim:restrictneighbours} again) $|C_1| \leq \tfrac{1}{2}r-1$, and hence $|W(vw)| \geq \tau(G) - q - (|C_1|-2) > \tfrac{2}{3}kr -1 - q+2 -|C_1| \geq \tfrac{2}{3}(k-1)r + \tfrac{2}{3}r - 1 - \tfrac{1}{2}r+1 > \tfrac{2}{3}(k-1)r$, as desired. 
Thus $q \leq 1$, and $|W(vw)| \geq \tau(G) - q - (|C_1|-2) > \tfrac{2}{3}kr -1 - q+2 -|C_1| \geq \tfrac{2}{3}(k-1)r + \tfrac{2}{3}r - |C_1| \geq \tfrac{2}{3}(k-1)r$, since $C_1$ is small and $|C_1| \leq \tfrac{2}{3}r$. Hence our lower bound on $|W(vw)|$ holds.
%
%If $|V(C_1)| \leq \tfrac{2}{3}r-1$, then $W(vw)$ is sufficiently large as $q \leq 2$. If $|V(C_1)| > \tfrac{2}{3}r-1$, then $q \leq 1$ and $|W(vw)| > \tfrac{2}{3}(k-1)r + \tfrac{2}{3}r-1 - \tfrac{2}{3}r+2-1$, which is again sufficient. This accounts for all cases and our lower bound on $|W(vw)|$ holds.
\end{proof}

Our goal now is to show that $\mathcal{U}$ contains few vertices; this shall give the final contradiction.

\begin{claim}
\label{claim:smallissmall}
If $C_1,C_2 \in \mathcal{U}$ are the two largest small cycles such that $|C_1| \geq |C_2|$, then $|C_2|=1$. 
\end{claim}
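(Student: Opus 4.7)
The plan is to derive a contradiction from the assumption $|C_2|\geq 2$ by forcing both $|W(vw)|$ and $|W(xy)|$ to exceed $\tfrac{2}{3}(k-1)r$, which contradicts Claim~\ref{claim:firstroute} (note that $|V(\mathcal{C}(r))|=(k-1)r$ by Claim~\ref{claim:kminus1cycles}, and the hypothesis $|C_1|\geq|C_2|\geq 2$ lets the claim apply). Let $vw$ be an edge of $C_1$ and $xy$ an edge of $C_2$; Claim~\ref{claim:Aisbig} immediately gives $|W(vw)|>\tfrac{2}{3}(k-1)r$, so all the work lies in establishing the same lower bound for $|W(xy)|$.

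To bound $|W(xy)|$ from below, I will first locate which cycles of $\mathcal{U}$ can contribute common neighbours of $x$ and $y$. By Claim~\ref{claim:upstairstriangle}, such neighbours are confined to $C_2$ itself or to cycles of $\mathcal{U}$ of order strictly greater than $|C_2|$; by Claims~\ref{claim:twocycles} and~\ref{claim:notwobig}, the only candidates among the latter are the (unique) big cycle $C_0$, if one exists, together with $C_1$ in the case $|C_1|>|C_2|$. Let $q_0$ and $q_1$ denote the numbers of common neighbours of $x,y$ in $C_0$ and $C_1$ respectively, adopting $q_0=0$ when no big cycle exists, and $q_1=0$ when $|C_1|=|C_2|$ (since Claim~\ref{claim:upstairstriangle} then forbids such a neighbour). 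Using $\tau(G)>\tfrac{2}{3}kr-1$ and the trivial bound $|C_2|-2$ on the common neighbours inside $C_2$, one obtains
\[
|W(xy)| \;\geq\; \tau(G)-q_0-q_1-(|C_2|-2) \;>\; \tfrac{2}{3}kr+1-q_0-q_1-|C_2|,
\]
so it is enough to show $q_0+q_1+|C_2|\leq \tfrac{2}{3}r+1$.

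The bounds on $q_0$ and $q_1$ will come from Claim~\ref{claim:otherupstairstriangle} ($q_i\leq \tfrac{|C_i|}{3}$) together with Claim~\ref{claim:restrictneighbours}, which for $q_i\geq 2$ yields $q_i\leq \tfrac{|C_i|}{|C_2|+1}$; combined with $|C_0|\leq r-1$ and $|C_1|\leq \tfrac{2}{3}r$, this implies that $q_0\geq 2$ already forces $|C_2|\leq \tfrac{r-3}{2}$ and $q_1\geq 2$ forces $|C_2|\leq \tfrac{r}{3}-1$. I expect the main obstacle to be a delicate case analysis, since the target inequality is tight: when $|C_2|$ is close to $\tfrac{2}{3}r$ the above constraints force $q_0,q_1\leq 1$, so $q_0+q_1+|C_2|\leq 2+(\tfrac{2}{3}r-1)=\tfrac{2}{3}r+1$ holds by a narrow margin (and in the subcase $|C_1|=|C_2|$ we get $q_1=0$ for free); when $|C_2|$ is smaller the permitted values of $q_0,q_1$ grow, but the quadratic inequality $3c^2-2rc+r-6\leq 0$ (with $c=|C_2|$) that controls the boundary remains valid throughout the relevant range, so $q_0+q_1+|C_2|\leq \tfrac{2}{3}r+1$ still holds in every subcase. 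Applying Claim~\ref{claim:firstroute} then delivers the contradiction.
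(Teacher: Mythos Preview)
Your proposal is correct and follows essentially the same approach as the paper: assume $|C_2|\geq 2$, invoke Claim~\ref{claim:Aisbig} for $|W(vw)|$, and then bound $|W(xy)|$ by controlling the common neighbours $q_0,q_1$ of $x,y$ in $C_0,C_1$ via Claims~\ref{claim:upstairstriangle}, \ref{claim:otherupstairstriangle}, and \ref{claim:restrictneighbours}, reaching a contradiction through Claim~\ref{claim:firstroute}. The paper organises the case analysis slightly differently---it sets $p=\tfrac{2}{3}r-|C_2|$, derives $q_0\leq\tfrac{1}{2}p+\tfrac{2}{3}$ and $q_1\leq\tfrac{1}{3}p+\tfrac{2}{3}$ when $q_i\geq 2$, forces $p<2$, and then lands on $q_0=q_1=1$ with $|C_1|=|C_2|$, contradicting Claim~\ref{claim:upstairstriangle}---but this is exactly the tight endgame you identify in your parenthetical; your quadratic $3c^2-2rc+r-6\leq 0$ is not quite the right expression (the computation gives $3c^2-2rc+3r-6$, and even that alone is not sufficient without separately handling the boundary), so when you fill in the details you will need the same integer/case refinements the paper uses rather than a single clean inequality.
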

\begin{proof}
Assume for the sake of a contradiction that $|C_1| \geq |C_2| \geq 2$. Pick edges $vw$ in $C_1$ and $xy$ in $C_2$, and let $A:=W(vw)$ and $B:=W(xy)$. As we did in Claim~\ref{claim:kminus1cycles}, we show that $|A|,|B| > \tfrac{2}{3}(k-1)r$, which contradicts one of Claims~\ref{claim:firstroute}, \ref{claim:firstroutea} or \ref{claim:firstrouteb}. By Claim~\ref{claim:notwobig}, it is possible that one big cycle exists in $\mathcal{U}$; we denote this cycle by $C_0$.

By Claim~\ref{claim:Aisbig}, $|A| > \tfrac{2}{3}(k-1)r$. Now consider the lower bound on $|B|$. Let $p := \tfrac{2}{3}r-|C_2|$ and note $p \geq 0$ (since $C_2$ is small). The set $B$ contains all common neighbours of $x$ and $y$ except those in $C_0$ (the number of which we denote by $q_0$), those in $C_1$ (the number of which we denote by $q_1$) and at most $|C_2|-2$ in $C_2$. Thus $|B| \geq \tau(G) - q_0 - q_1 - |C_2|+2 > \tfrac{2}{3}(k-1)r + \tfrac{2}{3}r - q_0 - q_1 - |C_2|+1$. If $\tfrac{2}{3}r - q_0 - q_1 - |C_2|+1 \geq 0$, then we are done. So assume $p < q_0 + q_1 -1$. Since $p \geq 0$ we have $q_0 + q_1 \geq 2$. 

If $q_0 \geq 2$, then by Claim~\ref{claim:restrictneighbours}, $|C_2| \leq \tfrac{|C_0|}{q_0}-1$. Since $|C_2| \geq 2$ and $|C_0| \leq r-1$, it follows that $3q_0 + 1 \leq r$. Also $\tfrac{2}{3}r-p = |C_2| \leq \tfrac{|C_0|}{q_0} - 1$. Hence $\tfrac{2}{3}r-p+1 \leq \tfrac{r-1}{q_0}$ and thus $2q_0^2 + \tfrac{2}{3}q_0 -3q_0 -pq_0 + q_0 = (\tfrac{2}{3}q_0-1)(3q_0 + 1) - pq_0 + q_0 + 1 \leq (\tfrac{2}{3}q_0-1)r -pq_0 + q_0 +1 \leq 0$. Thus $2q_0 + \tfrac{2}{3} - 3 - p + 1 \leq 0$ and $q_0 \leq \tfrac{1}{2}p + \tfrac{2}{3}.$

Similarly, if $q_1 \geq 2$, by Claim~\ref{claim:restrictneighbours}, $|C_2| \leq \tfrac{|C_1|}{q_1} -1$. Since $|C_2| \geq 2$ and $|C_1| \leq \tfrac{2}{3}r$, it follows $\tfrac{9}{2}q_1 \leq r$. Also $\tfrac{2}{3}r - p = |C_2| \leq \tfrac{|C_1|}{q_1} - 1$. Thus $\tfrac{2}{3}r-p+1 \leq \tfrac{2r}{3q_1}$, and so $\tfrac{2}{3}(q_1-1)(\tfrac{9}{2}q_1) - pq_1 + q_1 \leq \tfrac{2}{3}(q_1-1)r -pq_1 + q_1 \leq 0$. Hence $3(q_1 - 1) - p + 1 \leq 0$ and $q_1 \leq \tfrac{1}{3}p + \tfrac{2}{3}$.

We now show that $p < 2$. If $q_0 \geq 2$ and $q_1 \geq 2$, then $p < q_0 + q_1 -1 \leq \tfrac{1}{2}p + \tfrac{2}{3} + \tfrac{1}{3}p + \tfrac{2}{3} - 1 = \tfrac{5}{6}p + \tfrac{1}{3}$, and thus $p < 2$. If $q_0 \geq 2$ and $q_1 \leq 1$, then $p < q_0 \leq \tfrac{1}{2}p + \tfrac{2}{3}$, and $p < \tfrac{4}{3} < 2$. If $q_1 \geq 2$ and $q_0 \leq 1$, then $p < q_1 \leq \tfrac{1}{3}p + \tfrac{2}{3}$ and $p < 1$. Finally, if $q_0 \leq 1$ and $q_1 \leq 1$ then $p < 1$. Hence $p < 2$.

If $q_0 \geq 2$ or $q_1 \geq 2$, then $q_0 < \tfrac{5}{3} < 2$ or $q_1 < \tfrac{4}{3} < 2$ respectively, and thus $q_0 \leq 1$ and $q_1 \leq 1$. Since $q_0 + q_1 \geq 2$, it follows $q_0=q_1=1$, and $p < 1$. Hence $|C_2| > \tfrac{2}{3}r-1$. Since $|C_2| \leq \tfrac{2}{3}r$ and $|C_2|$ is an integer, $|C_2| \leq \floor{\tfrac{2}{3}r}$, and together with our lower bound, $|C_2| = \floor{\tfrac{2}{3}r}$. Since $|C_1| \geq |C_2|$ and $C_1$ is also small, $|C_1|=|C_2|$. However, since $q_1 = 1$, this contradicts Claim~\ref{claim:upstairstriangle}. 
\end{proof}

%We now know that $\mathcal{U}$ contains at most two cycles that are not single vertices; one big and one small. (Note that the big cycle may not exist, the small cycle must exist, but the small cycle could be a single vertex itself.) Let $\mathcal{I}$ be the set of all small cycles of $\mathcal{U}$ other than the largest; by Claim~\ref{claim:smallissmall} these are all $1$-cycles, and by Claim~\ref{claim:nolowedge} they form an independent set. (Note we interpret $\mathcal{I}$ as a set of vertices.)
We now know that $\mathcal{U}$ contains
\begin{itemize*}
\item at most one big cycle (by Claim~\ref{claim:notwobig}),
\item at least two small cycles (by Claims~\ref{claim:notwobig} and \ref{claim:twocycles}), and
\item every small cycle, except possibly the largest, is a $1$-cycle (by Claim~\ref{claim:smallissmall}).
\end{itemize*}
Let $\mathcal{I}$ be the set of all small cycles of $\mathcal{U}$ other than the largest. Since these are all $1$-cycles, we interpret $\mathcal{I}$ as a set of vertices; $\mathcal{I}$ is an independent set by Claim~\ref{claim:nolowedge}. Let $C_1$ be the largest small cycle in $\mathcal{U}$. 

\begin{claim}
\label{claim:structure}
$ $
\begin{enumerate*}
\item[(a)] there exists a big cycle in $\mathcal{U}$, denoted $C_0$,
\item[(b)] every $u \in \mathcal{I}$ has more than $\tfrac{1}{3}|C_0|$ neighbours in $C_0$,
\item[(c)] every $u \in \mathcal{I}$ has at least two neighbours in $C_1$, and 
\item[(d)] $|C_1| > \tfrac{1}{3}r$.
\end{enumerate*}
%The set of cycles $\mathcal{U}$ contains one big cycle $C_0$, one small cycle $C_1$ such that $|V(C_1)| > \tfrac{1}{3}r$, and an independent set $\mathcal{I}$. Every $u \in \mathcal{I}$ is adjacent to more than $\tfrac{1}{3}|V(C_0)|$ vertices of $C_0$ and at least two vertices of $C_1$.
\end{claim}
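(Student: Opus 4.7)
I will prove the four parts by contradiction, in the order (a), (d), (b), (c), so that (b) and (c) may freely assume $|C_1| \geq 2$. Throughout, the main tool is the degree lower bound $\deg(u) > \tfrac{1}{2}d(G) \geq \tfrac{2}{3}kr$ applied to some $u \in \mathcal{I}$, together with the fact that the 1-vertex cycles of $\mathcal{U}$ form an independent set (Claim~\ref{claim:nolowedge}). This restricts $N(u) \subseteq V(C_0) \cup V(C_1) \cup V(\mathcal{C}(r))$ (with $V(C_0)$ omitted when no big cycle exists), and the three resulting pieces are bounded by preceding claims.

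For (a), assume $\mathcal{U}$ has no big cycle. By Claim~\ref{claim:twocycles}, $|\mathcal{U}| \geq 2$, so $\mathcal{I} \neq \emptyset$; pick $u \in \mathcal{I}$. If $|C_1| \geq 2$, Claims~\ref{claim:Aisbig} and~\ref{claim:firstroutea} together force $|W(u)| \leq \tfrac{2}{3}(k-1)r$, so $\deg(u) \leq |C_1| + |W(u)| \leq \tfrac{2}{3}r + \tfrac{2}{3}(k-1)r = \tfrac{2}{3}kr$, contradicting $\deg(u) > \tfrac{2}{3}kr$. If $|C_1| = 1$, then $V(\mathcal{U})$ consists entirely of 1-cycles and is independent, so $|W(u)| = \deg(u) > \tfrac{2}{3}kr > \tfrac{2}{3}(k-1)r$ for every 1-cycle $u \in V(\mathcal{U})$; any two such 1-cycles then violate Claim~\ref{claim:firstrouteb}.

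For (d), assume $|C_1| \leq \tfrac{r}{3}$. Since any $u \in \mathcal{I}$ is a 1-cycle, Claim~\ref{claim:upstairstriangle} (applied with $C = C_0$ and, if $|C_1| \geq 2$, with $C = C_1$) forbids $u$ from being a common neighbour of any edge of $C_0$ or $C_1$, so $N(u) \cap V(C_0)$ and $N(u) \cap V(C_1)$ are independent in their respective cycles, giving $|N(u) \cap V(C_0)| \leq \tfrac{r-1}{2}$ and, when applicable, $|N(u) \cap V(C_1)| \leq \tfrac{|C_1|}{2} \leq \tfrac{r}{6}$. If $|C_1| \geq 2$ (Claim~\ref{claim:twocycles} ensures $\mathcal{I} \neq \emptyset$), Claims~\ref{claim:Aisbig} and~\ref{claim:firstroutea} give $|W(u)| \leq \tfrac{2}{3}(k-1)r$, so $\deg(u) \leq \tfrac{r-1}{2} + \tfrac{r}{6} + \tfrac{2}{3}(k-1)r = \tfrac{2}{3}kr - \tfrac{1}{2}$, a contradiction. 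If $|C_1| = 1$, then $V(C_1) \cup \mathcal{I}$ is independent (Claim~\ref{claim:nolowedge}), so $|W(u)| > \tfrac{2}{3}kr - \tfrac{r-1}{2} > \tfrac{2}{3}(k-1)r$ for every 1-cycle $u$; with $|\mathcal{I}| \geq 1$ (else Claim~\ref{claim:twocycles} fails), two such 1-cycles again violate Claim~\ref{claim:firstrouteb}.

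Now (d) gives $|C_1| \geq 2$, so Claims~\ref{claim:Aisbig} and~\ref{claim:firstroutea} apply to every $u \in \mathcal{I}$, yielding $|W(u)| \leq \tfrac{2}{3}(k-1)r$ and hence $|N(u) \cap V(C_0)| + |N(u) \cap V(C_1)| > \tfrac{2}{3}r$. For (b), bounding $|N(u) \cap V(C_1)| \leq \tfrac{r}{3}$ (since $|C_1| \leq \tfrac{2}{3}r$ and $N(u) \cap V(C_1)$ is independent in $C_1$) gives $|N(u) \cap V(C_0)| > \tfrac{r}{3} > \tfrac{|C_0|}{3}$ as $|C_0| \leq r-1$. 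For (c), instead bounding $|N(u) \cap V(C_0)| \leq \tfrac{r-1}{2}$ gives $|N(u) \cap V(C_1)| > \tfrac{r+3}{6} \geq 1$, hence at least $2$ for $r \geq 3$. The principal obstacle is the $|C_1| = 1$ subcases of (a) and (d), where Claims~\ref{claim:Aisbig} and~\ref{claim:firstroutea} are unavailable; these must be closed via Claim~\ref{claim:firstrouteb} on pairs of 1-cycles, combined with the independent-set bound on $|N(u) \cap V(C_0)|$ to force $|W(u)|$ to be too large.
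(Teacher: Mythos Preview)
Your proof is correct and uses essentially the same approach as the paper. Both arguments combine Claim~\ref{claim:Aisbig} (or its $|C_1|=1$ analogue) with Claims~\ref{claim:firstroutea}/\ref{claim:firstrouteb}, and bound the neighbours of $u\in\mathcal{I}$ in $C_0$ and $C_1$ via Claim~\ref{claim:upstairstriangle}; the only difference is organisational---the paper handles all four failures in one unified case split showing $|W(u)|>\tfrac{2}{3}(k-1)r$ directly, whereas you take the contrapositive (deduce $|W(u)|\le\tfrac{2}{3}(k-1)r$ first, then contradict the degree bound) and treat the parts sequentially so that (d) guarantees $|C_1|\ge2$ before (b) and (c).
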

\begin{proof}
Assume, for the sake of a contradiction, that at least one of the following holds: 
\begin{enumerate*}
\item[(a)] $\mathcal{U}$ contains no big cycle,
\item[(b)] there exists a big cycle $C_0$ and some $u \in \mathcal{I}$ has at most $\tfrac{1}{3}|C_0|$ neighbours in $C_0$,
\item[(c)] there exists a big cycle $C_0$ and some $u \in \mathcal{I}$ has at most one neighbour in $C_1$, or
\item[(d)] $|C_1| \leq \tfrac{1}{3}r.$
\end{enumerate*}
If $C_1$ contains an edge $vw$, let $A:=W(vw)$, otherwise let $A:=W(v)$ where $V(C_1) = \{v\}$. Let $B:=W(u)$. Once again we show that $|A|,|B| > \tfrac{2}{3}(k-1)r$, which contradicts either Claim~\ref{claim:firstroutea} or \ref{claim:firstrouteb}.

If $|C_1| \geq 2$, then by Claim~\ref{claim:Aisbig}, $|A|$ is sufficiently large. If $|C_1|=1$, then $A$ contains all neighbours of $v$ except those in $C_0$ (by Claim~\ref{claim:nolowedge}). By Claim~\ref{claim:upstairstriangle}, there are at most $\tfrac{1}{2}|C_0| \leq \tfrac{1}{2}r$ of these. Hence $|A| \geq \delta(G) - \tfrac{1}{2}r > \tfrac{2}{3}kr - \tfrac{1}{2}r = \tfrac{2}{3}(k-1)r + \tfrac{1}{6}r$, as desired.

Now consider $|B|$. The set $B$ contains all vertices adjacent to $u$ except those in $C_0$ and those in $C_1$ (since $\mathcal{I}$ is an independent set); say there are $q_0$ neighbours of $u$ in $C_0$ and $q_1$ neighbours of $u$ in $C_1$. Thus $|B| \geq \delta(G) - q_0 - q_1 > \tfrac{2}{3}kr - q_0 -q_1 = \tfrac{2}{3}(k-1)r + \tfrac{2}{3}r-q_0-q_1$. If $\tfrac{2}{3}r-q_0-q_1 \geq 0$ we are done. So assume $q_0 + q_1 > \tfrac{2}{3}r.$

By Claim~\ref{claim:upstairstriangle}, it follows that $q_0 \leq \tfrac{1}{2}|C_0|$ and $q_1 \leq \tfrac{1}{2}|C_1|$. In (a) or (b), $\tfrac{2}{3}r < q_0 + q_1 \leq \tfrac{1}{3}(r-1) + \tfrac{1}{2}(\tfrac{2}{3}r) = \tfrac{2}{3}r - \tfrac{1}{3}$, since $C_1$ is small; this is a contradiction. In (c), $\tfrac{2}{3}r < q_0 + q_1 \leq \tfrac{1}{2}(r-1) +1$, which implies $r<3$, again a contradiction. In (d), $\tfrac{2}{3}r < q_0 + q_1 \leq \tfrac{1}{2}(r-1) + \tfrac{1}{6}r = \tfrac{2}{3}r - \tfrac{1}{2}$, yet again a contradiction.  Thus we have proven our desired lower bounds on $|A|$ and $|B|$ in all cases.
\end{proof}

By Claim~\ref{claim:twocycles} and Claim~\ref{claim:notwobig}, $\mathcal{I} \neq \emptyset$. Our final step is to bound $|\mathcal{I}|$.

\begin{claim}
$|\mathcal{I}| = 1$.
\end{claim}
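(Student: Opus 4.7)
The plan is to show $|\mathcal{I}|\leq 1$ by contradiction; combined with $\mathcal{I}\neq\emptyset$, this gives $|\mathcal{I}|=1$. Assume $|\mathcal{I}|\geq 2$ and pick distinct $u_1,u_2\in\mathcal{I}$. The aim is to construct a cycle $D\subseteq G[V(\mathcal{U})]$ of order at least $r$. Since $V(\mathcal{U})$ is disjoint from $V(\mathcal{C}(r))$, the $k-1$ cycles in $\mathcal{C}(r)$ together with $D$ then form $k$ pairwise-disjoint cycles of order at least $r$ in $G$, contradicting our initial assumption.

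First I strengthen each $u\in\mathcal{I}$'s neighbourhood bound. Claim~\ref{claim:Aisbig} gives $|W(vw)|>\tfrac{2}{3}(k-1)r$ for any edge $vw$ of $C_1$ (note $|C_1|\geq 2$ when $r\geq 4$; the small values of $r$ are handled earlier because Claim~\ref{claim:structure}(a)--(d) then becomes inconsistent with Claim~\ref{claim:upstairstriangle}). Applying Claim~\ref{claim:firstroutea} to $C_1$ and the 1-cycle $\{u\}$ forces $|W(u)|\leq\tfrac{2}{3}(k-1)r$. Since $\mathcal{I}$ is independent by Claim~\ref{claim:nolowedge}, all remaining neighbours of $u$ lie in $C_0\cup C_1$, giving $q_0(u)+q_1(u)>\delta(G)-\tfrac{2}{3}(k-1)r>\tfrac{2}{3}r$. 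Combined with the bound $q_1(u)\leq|C_1|/2\leq r/3$ from Claim~\ref{claim:upstairstriangle}, this yields $q_0(u)>r/3$, so in particular $|N(u_1)\cap C_0|+|N(u_2)\cap C_0|>\tfrac{2}{3}|C_0|$.

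A cyclic pigeonhole along $C_0$ --- using that each $N(u_i)\cap C_0$ is independent in $C_0$ (Claim~\ref{claim:upstairstriangle}) --- then yields either a common neighbour $a\in C_0$ of $u_1$ and $u_2$, or an edge $v_1v_2$ of $C_0$ with $v_1\in N(u_1)$ and $v_2\in N(u_2)$; otherwise every marked vertex would be flanked by unmarked vertices, forcing $|N(u_1)\cup N(u_2)|\leq|C_0|/2$. An analogous analysis on $C_1$ (exploiting $q_1(u_i)\geq 2$ from Claim~\ref{claim:structure}(c)) connects $u_1$ and $u_2$ through $C_1$. In the cleanest case, with edge-adjacent bridges $v_1v_2$ in $C_0$ and $xz$ in $C_1$, assemble
\[
D \;=\; u_1\,v_1\,(\text{long arc of } C_0 \text{ from } v_1 \text{ to } v_2)\,v_2\,u_2\,x\,(\text{long arc of } C_1 \text{ from } x \text{ to } z)\,z\,u_1,
\]
which uses all of $V(C_0)\cup V(C_1)\cup\{u_1,u_2\}$, so $|D|=|C_0|+|C_1|+2>r+2\geq r$. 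When one or both bridges are common neighbours rather than edges, use the common neighbour as a single-vertex bridge and, if needed, extend through another neighbour of $u_i$ on the corresponding cycle; the inequality $|C_0|+|C_1|>r$ still lets one reach $|D|\geq r$.

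The main obstacle is this final case analysis, especially on $C_1$: for large $r$ the sum $q_1(u_1)+q_1(u_2)$ need not exceed $\tfrac{2}{3}|C_1|$, so the pigeonhole does not automatically deliver an edge-adjacent pair. In that regime one must leverage $q_1(u_i)\geq 2$ together with the positions of these neighbours on $C_1$ to either exhibit a common neighbour or bridge through a long enough arc, and then verify via the bounds $|C_0|>\tfrac{2}{3}r$, $|C_1|>\tfrac{1}{3}r$ that the assembled $D$ satisfies $|D|\geq r$ in every configuration.
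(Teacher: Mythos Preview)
Your argument has a genuine gap precisely where you flag the ``main obstacle'': the bridge through $C_1$. From $q_1(u_i)\geq 2$ alone you cannot force a common neighbour or an edge-adjacent pair in $C_1$; for instance, with $|C_1|=10$ one may have $N(u_1)\cap C_1=\{1,3\}$ and $N(u_2)\cap C_1=\{6,8\}$. Your sketch for the remaining subcases (``use the common neighbour as a single-vertex bridge and, if needed, extend\dots'') is also not enough: a single common neighbour in $C_0$ contributes only one vertex to $D$, and since $|C_1|\leq\tfrac{2}{3}r$, the assembled cycle can fall well short of $r$.

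The underlying problem is that the target $|D|\geq r$ is stronger than necessary. The paper aims only for a cycle $C$ with $|C|>|C_0|$: then either $|C|\leq r$, contradicting the maximality of $\mathcal{C}$ after replacing $C_0,C_1,\{x\},\{x'\}$ by $C$, or $|C|>r$, which together with $\mathcal{C}(r)$ gives $k$ disjoint cycles of order at least $r$. To achieve $|C|>|C_0|$ the paper keeps \emph{most} of $C_0$ and compensates with a modest piece of $C_1$. Concretely, it applies Lemma~\ref{lemma:lemmafordouble} to $S=N(x)\cap V(C_0)$ and $T=N(x')\cap V(C_0)$ (each of size $>\tfrac{1}{3}|C_0|$ by Claim~\ref{claim:structure}(b)) to obtain a \emph{short} path $P$ in $C_0$, with $2\leq|P|<\tfrac{1}{6}r+4$, joining a neighbour of $x$ to a neighbour of $x'$. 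On $C_1$ no pigeonhole is needed: since each of $x,x'$ has at least two neighbours in $C_1$, one simply picks distinct neighbours $a,b$ and takes the longer arc $Q$ between them, so $|Q|>\tfrac{1}{2}|C_1|>\tfrac{1}{6}r$. Deleting the interior of $P$ from $C_0$ and splicing in $x,x',Q$ yields $|C|=|C_0|-|P|+4+|Q|>|C_0|$, and the dichotomy finishes. Your sharpened bound $q_0(u)>\tfrac{1}{3}r$ is correct but unnecessary, since Claim~\ref{claim:structure}(b) already supplies the hypothesis of Lemma~\ref{lemma:lemmafordouble}.
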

\begin{proof}
Assume $|\mathcal{I}| \geq 2$, for the sake of a contradiction. Let $x,x' \in \mathcal{I}$. Let $S,T$ be the neighbours of $x,x'$ in $C_0$ respectively. By Claim~\ref{claim:structure}, $|S|,|T| > \tfrac{1}{3}|C_0|$. Given that $|C_0| > \tfrac{2}{3}r \geq 2$, it follows from Lemma~\ref{lemma:lemmafordouble} that there exists a path $P$ in $C_0$ with one end vertex in $S$, the other in $T$, and $2 \leq |P| \leq \tfrac{1}{6}|C_0| + 4 < \tfrac{1}{6}r+4$. Let $v$ be the end vertex of $P$ in $S$ and $w$ the end vertex of $P$ in $T$. Since $x,x'$ both have two or more neighbours in $C_1$ (again by Claim~\ref{claim:structure}), there is a neighbour $a$ of $x$ and a neighbour $b \neq a$ of $x'$. Let $Q$ be the longer path in $C_1$ between $a$ and $b$; since, by Claim~\ref{claim:structure}, $|C_1| > \tfrac{1}{3}r$, it follows $|Q| > \tfrac{1}{6}r$.

Create the cycle $C$ in the following way. Start with $C_0$ and remove the interior vertices of $V(P)$. Add edges $vx$ and $wx'$, and then the edges $xa$ and $x'b$. Finally add the path $Q$ between $a$ and $b$. Thus $|C| = |C_0| - (|P|-2) + 2 + |Q| = |C_0| - |P| + 4 + |Q| > |C_0| - (\tfrac{1}{6}r+4)+4 + \tfrac{1}{6}r = |C_0|$. If $|C| \leq r$, then removing $C_0,C_1,\{x\}$ and $\{x'\}$ from $\mathcal{C}$ and adding $C$ gives a better choice of $\mathcal{C}$. Otherwise, $\mathcal{C}(r) \cup \{C\}$ is a set of $k$ cycles, where $k-1$ have order exactly $r$ and one has order $>r$; this contradicts our initial assumption about $G$. 
\end{proof}

Now we know that $\mathcal{U}$ contains three cycles: big cycle $C_0$, small cycle $C_1$ and a single $1$-cycle in $\mathcal{I}$. Hence $|V(\mathcal{U})| = |C_0| + |C_1| + 1 \leq (r-1) + \tfrac{2}{3}r + 1 = \tfrac{4}{3}r$. Given that $|V(\mathcal{C}(r))| = (k-1)r$, it follows by Claim~\ref{claim:truepartition} that $|V(G)| \leq (k-1)r + \tfrac{4}{3}r = kr + \tfrac{1}{3}r$, and $d(G) \leq kr + \tfrac{1}{3}r -1$. Thus $\tfrac{4}{3}kr \leq d(G) \leq kr + \tfrac{1}{3}r-1$, implying $0 \leq \tfrac{1}{3}(k-1)r \leq -1.$
This is the final contradiction, and Theorems~\ref{theorem:intermsofsubgraphs}, \ref{theorem:cycles} and \ref{theorem:minimal} are proven. Note that we have actually proven the following strengthening of Theorem~\ref{theorem:minimal}: for integers $k \geq 6$ and $r \geq 3$, every minimal graph with average degree at least $\tfrac{4}{3}kr$ contains a set of $k$ disjoint cycles, $k-2$ with exactly $r$ vertices and two with at least $r$ vertices.
%\end{proof}

\section{Ancillary Lemmas}
\label{sec:anc}

This section presents some technical lemmas that were used in Section~\ref{section:main}. Note that none of these results depend on the choice of $G$. Recall if $C$ is a cycle then $|C| := |V(C)|$ and if $P$ is a path then $|P|:= |V(P)|$. Also if $\mathcal{F}$ is a collection of disjoint cycles, then let $V(\mathcal{F}) := \bigcup_{F \in \mathcal{F}} V(F)$. In this section, all cycles contain at least three vertices; that is, we no longer treat $K_2$ and $K_1$ as cycles. (The lemmas in this section are applied to either $\mathcal{C}(r)$, which only contains cycle of length $r \geq 3$, or to big cycles of $\mathcal{U}$, which contain strictly more than $\tfrac{2}{3}r \geq 2$ vertices, so this is acceptable.)

\begin{lemma}
\label{lemma:reroute}
Let $\mathcal{F} = \{F_1,\dots,F_t\}$ be a collection of disjoint cycles and let $q_1,\dots,q_t$ be integers such that $1 \leq q_i \leq |F_i|-1$. If $S,T \subseteq V(\mathcal{F})$ and $|S|,|T| > \tfrac{2}{3}|V(\mathcal{F})|$, then there exists a path $P$ in some $F_i$ with exactly $q_i$ vertices, such that both end vertices of $P$ are in $S$ and there exists a vertex of $T$ in $V(F_i) - V(P)$.
\end{lemma}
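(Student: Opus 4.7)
The plan is a direct counting argument on the set of candidate length-$q_i$ paths. For each $F_i$ I will label its $n_i := |F_i|$ vertices clockwise as $0, 1, \dots, n_i-1$, and for each $j$ let $P_{i,j}$ denote the clockwise arc of $q_i$ consecutive vertices starting at $j$; its endpoints are $j$ and $j+q_i-1$ (indices mod $n_i$), and since $q_i \leq n_i - 1$ its complement in $V(F_i)$ is a non-empty arc of $n_i - q_i$ vertices. Every $q_i$-vertex subpath of $F_i$ arises uniquely as some $P_{i,j}$, so there are $n_i$ candidates in $F_i$ and $n := |V(\mathcal{F})| = \sum_i n_i$ candidates in total.

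First I will establish a lower bound on the number $A$ of candidates whose two endpoints both lie in $S$. In $F_i$ the map $\sigma_i(v) := v + q_i - 1$ is a permutation of $V(F_i)$, so writing $S_i := S \cap V(F_i)$ we have $a_i = |S_i \cap \sigma_i^{-1}(S_i)|$, which by inclusion-exclusion in the universe $V(F_i)$ is at least $2|S_i| - n_i$. Summing over $i$ gives $A \geq 2|S| - n$.

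Next I will establish an upper bound on the number $D$ of \emph{bad} candidates, where I call $P_{i,j}$ bad if $T \cap V(F_i) \subseteq V(P_{i,j})$, i.e.\ no vertex of $T$ lies in $V(F_i)\setminus V(P_{i,j})$. A bad arc's complement is a $T$-free arc of length $n_i - q_i \geq 1$, whose first vertex $j+q_i \pmod{n_i}$ must therefore lie in $V(F_i) \setminus T$. Since distinct bad arcs give distinct starting positions for their complements, this is an injection from the bad arcs of $F_i$ into $V(F_i) \setminus T$, so $c_i \leq n_i - t_i$, where $t_i := |T \cap V(F_i)|$. Summing yields $D := \sum_i c_i \leq n - |T|$.

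The number of arcs satisfying both desired conditions is at least $A - D \geq 2|S| + |T| - 2n$, and the strict hypothesis $|S|,|T| > \tfrac{2}{3}n$ makes this quantity strictly positive; since it is a non-negative integer it is at least $1$, so some candidate arc serves as the required path $P$. The only point that warrants any real care is stating the $c_i$ bound so that the degenerate case $t_i = 0$ (where every arc is vacuously bad) is handled uniformly, which the injection above achieves without a separate case split.
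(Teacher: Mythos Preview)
Your argument is correct and is essentially the paper's own proof, reorganised as a direct count rather than a contradiction. The paper defines the same shift map to get $|\mathcal{P}| \geq 2|S| - n$, and then, assuming the conclusion fails, injects $\mathcal{P}$ into $V(\mathcal{F})\setminus T$ via the vertex immediately \emph{counter}-clockwise of each path (your injection uses the vertex immediately clockwise past the path, which is the symmetric choice); both yield the same inequality $2|S| + |T| \leq 2n$, contradicting the hypothesis.
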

\begin{proof}
Assume otherwise for the sake of a contradiction. Fix an orientation on every cycle in $F_1,\dots,F_t$. For each vertex $v \in V(F_i)$, let $f(v)$ denote the vertex $q_i-1$ vertices clockwise from $v$ in $F_i$, and let $P_v$ be the $q_i$-vertex path from $v$ clockwise to $f(v)$. Let $f(S) := \{f(v) : v \in S\}$. Since $f: V(\mathcal{F}) \rightarrow V(\mathcal{F})$ is a $1-1$ correspondence, $|f(S)|=|S|$. Let $\mathcal{P} := \{P_v : v,f(v) \in S\}$. If $P_v \in \mathcal{P}$, then $f(v) \in S \cap f(S)$. Conversely, if $w \in S \cap f(S)$, then $w = f(v)$ for some $v \in S$, and so $P_v \in \mathcal{P}$. Hence $|\mathcal{P}| = |S \cap f(S)|$. Thus,
\begin{align}
|V(\mathcal{F})| \geq |S \cup f(S)| = |S| + |f(S)| - |S \cap f(S)| = 2|S| - |\mathcal{P}| \label{eq:eq_first}.
\end{align} 

Let $x(v)$ denote the vertex counter-clockwise from $v$. Since $|P_v|=q_i \leq |F_i|-1$, the vertex $x(v)$ is not in $P_v$. Let $\mathcal{X} := \{x(v) : P_v \in \mathcal{P}\}$. As $x$ is $1-1$, $|\mathcal{X}| = |\mathcal{P}|$. If $\mathcal{X} \cap T \neq \emptyset$, then there is a path in $\mathcal{P}$, which has both end vertices in $S$, together with a vertex of $T$ in the same cycle as $P$ but not intersecting $P$ itself. Hence $\mathcal{X} \cap T = \emptyset$. Thus,
\begin{align}
|V(\mathcal{F})| \geq |\mathcal{X} \cup T| = |\mathcal{X}| + |T| - |\mathcal{X} \cap T| = |\mathcal{P}| + |T| \label{eq:eq_second}.
\end{align}
\eqref{eq:eq_first} and \eqref{eq:eq_second} imply it follows $2|V(\mathcal{F})| \geq 2|S|+|T| >  \tfrac{4}{3}|V(\mathcal{F})| + \tfrac{2}{3}|V(\mathcal{F})| = 2|V(\mathcal{F})|$, which is a contradiction.
\end{proof}

\begin{lemma}
\label{lemma:imprweirdcase}
Let $C$ be a cycle such that $|C| \geq 6$ and let $p := \floor{\tfrac{|C|}{3}}$. If $S,T \subseteq V(C)$ such that $|S| \geq p+1$ and $|T| \geq 2p+2$, then there exist disjoint paths $P,Q$ such that the end vertices of $P$ are both in $S$, the end vertices of $Q$ are both in $T$, and $|V(P)|,|V(Q)| \geq p+1$.
\end{lemma}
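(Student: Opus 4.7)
The plan is to select two vertices $s_1, s_2 \in S$ to serve as the endpoints of $P$ and then locate $Q$ in the complementary arc via a pigeonhole ``span'' argument on $T$.

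First I would show that some pair $s_1, s_2 \in S$ has both arcs of $C$ between them containing at least $p+1$ vertices. Since $|S| \geq p+1$, the $S$-vertices cannot all lie in a single arc of only $p$ consecutive vertices, so some pair of $S$-vertices is at cyclic distance at least $p$ (yielding one arc of $\geq p+1$ vertices). Combined with $|C| = n \leq 3p+2$, a short pigeonhole argument shows that some such pair can be chosen so that both arcs have $\geq p+1$ vertices, since otherwise all $S$-pairs would cluster at cyclic distance strictly less than $p$, contradicting $|S| \geq p+1$.

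Let $A_1, A_2$ be the two arcs between $s_1, s_2$ and set $\alpha_i := A_i \setminus \{s_1, s_2\}$. The main tool is the span observation: any $k$ distinct points within an arc span at least $k$ consecutive positions. Partitioning $V(C)$ gives
\[
|T \cap \alpha_1| + |T \cap \alpha_2| \;=\; |T| - |T \cap \{s_1, s_2\}| \;\geq\; 2p.
\]
If some $|T \cap \alpha_i| \geq p+1$, then the extreme $T$-vertices in $\alpha_i$ already span at least $p+1$ positions, and I would take $Q$ as the subarc of $\alpha_i$ they bound together with $P := A_{3-i}$. Both paths have the required endpoints and lengths, and are disjoint because $Q \subseteq \alpha_i$ while $P = A_{3-i}$.

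The main obstacle is the boundary case $|T \cap \alpha_1| = |T \cap \alpha_2| = p$, which forces $s_1, s_2 \in S \cap T$. If the $T$-vertices within some $\alpha_i$ are not all consecutive, their span still exceeds $p$ and the previous argument applies. Otherwise each $\alpha_i$ consists of a single block of $p$ consecutive $T$-vertices bordered by non-$T$ segments. I would then invoke a third $S$-vertex $s_3$ (which exists since $|S| \geq p+1 \geq 3$) and substitute it for $s_1$ or $s_2$: whether $s_3$ lies in a non-$T$ border segment of $\alpha_1$ or $\alpha_2$ or inside a $T$-block, the substitution moves one of $s_1, s_2$ (itself a $T$-vertex) into the interior of the new arc $\alpha'$, pushing its $T$-count past $p$ and reviving the span argument. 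The most delicate step will be verifying this rebalancing in each of the three possible locations of $s_3$ so that some new pair always satisfies the hypothesis of the main span argument.
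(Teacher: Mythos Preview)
Your overall architecture---fix $s_1,s_2\in S$ with both arcs of length $\ge p+1$, then find a long $T$-span inside one open arc $\alpha_i$---is sound and genuinely different from the paper's. The main case (some $T\cap\alpha_i$ has span $\ge p+1$) and the reduction to the tight boundary configuration (both $T\cap\alpha_i$ are blocks of exactly $p$ consecutive vertices, $s_1,s_2\in T$, $|T|=2p+2$) are correct.

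The gap is in the boundary step. You assert that substituting \emph{any} third vertex $s_3\in S\setminus\{s_1,s_2\}$ for $s_1$ or $s_2$ works, because the displaced endpoint (a $T$-vertex) moves into the interior of a new open arc and pushes its $T$-span past $p$. That $T$-span claim is correct, but you have not checked that the \emph{new} pair still has both arcs of length $\ge p+1$, and this can fail. Concretely, with $s_1=0$, $\alpha_1=\{1,\dots,m_1\}$, $s_2=m_1+1$: replacing $s_1$ by $s_3=j\in\alpha_1$ requires $j\le m_1+1-p$ (else the arc $\{j,\dots,s_2\}$ is too short to serve as $P$), while replacing $s_2$ requires $j\ge p$; when $p\le m_1\le 2p-3$ the positions $m_1+2-p\le j\le p-1$ are bad for both substitutions. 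The repair is to \emph{choose} $s_3$ rather than take it arbitrarily: a short count shows the total number of bad positions across $\alpha_1\cup\alpha_2$ is at most $p-2$ (if both open arcs have bad ranges their sizes sum to $4p-n-2\le p-2$; if only one does, its bad range alone has size $\le p-2$ since $|\alpha_i|\ge p$ in the boundary case), whereas $|S\setminus\{s_1,s_2\}|\ge p-1$. So a good $s_3$ exists and your substitution then completes the proof.

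For comparison, the paper avoids this bookkeeping by a different opening move: the pigeonhole on triples $X_i=\{i,p+i,2p+i\}$ produces two $S$-vertices whose short arc $A$ satisfies the stronger bound $p+1\le|A|\le|C|+1-2p\le p+3$. That upper bound, together with choosing $A$ of minimum length, forces $|V(A)\cap S|\le p$ in the hard case and hands over an $S$-vertex outside $A$ directly, with no substitution or bad-position count.
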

\begin{proof}
Note $|C| \in \{3p,3p+1,3p+2\}$. First, we shall show there is a path $A$ in $C$ with both end vertices in $S$ such that $p+1 \leq |A| \leq |C|+1-2p$. Label the vertices of $C$ clockwise $1,\dots,|C|$ so that, if possible, $3p+1,3p+2 \notin S$. Let $X_i := \{i,p+i,2p+i\}$ for $i=1,\dots,p$. If $|X_i \cap S| \geq 2$ for some $i$, then the shorter path between two vertices of $X_i \cap S$ has at least $p+1$ vertices, and at most $p+1+(|C|-3p)=|C|+1-2p$, as desired. Otherwise $|X_i \cap S| \leq 1$ for all $i$. If $3p+1,3p+2 \notin S$, then $|S| \leq p$, which is a contradiction.

Hence we may assume that at least one of $3p+1$ and $3p+2$ is in $S$. If $|C|=3p$, no vertex is labelled $3p+1$ or $3p+2$, and this case cannot occur. If $|C|=3p+1$, then $3p+1 \in S$ and, by our choice of labelling, all vertices of $C$ are in $S$. But then it is trivial to construct a $(p+1)$-vertex path $A$ with both end vertices in $S$. If $|C|=3p+2$, then by our choice of labelling, there do not exist two adjacent vertices such that neither is in $S$. Let $v$ be a vertex of $S$, and let $w,w'$ be the two vertices $p,p+1$ vertices clockwise from $v$. At least one of $w,w'$ is in $S$, and together with $v$ this gives a path with both end vertices in $S$ of either $p+1$ or $p+2$ vertices, as required.

Thus there is a path $A$ with $p+1 \leq |A| \leq |C|+1-2p$ and both end vertices in $S$. If there are multiple choices for $A$ choose one of shortest length. If $|T-V(A)| \geq p+1$, set $P=A$ and let $Q$ be the longest path in $C-V(A)$ with both end vertices in $T$; such a path must contain all the vertices of $T-V(A)$, and thus $|Q| \geq p+1$, as desired. Hence we may assume that $|T-V(A)| \leq p$. Since $|T| \geq 2p+2$, we have $|V(A) \cap T| \geq p+2$ and $|A| \geq p+2$. %$|V(A) \cap T| \geq p+2$.

If $|C|=3p$, then $|V(A) \cap T| \leq |A|=p+1$, which is a contradiction. Hence $|C| \geq 3p+1$. Let $v$ be the counter-clockwise end vertex of $A$ and $w$ the clockwise end vertex. Label the two vertices clockwise from $v$ by $v',v''$ and the two vertices counter-clockwise from $w$ by $w',w''$. These vertices are all in $A$ since $|A| \geq p+2 \geq 4$. If $|A|=p+2$, then $v',w' \notin S$, or else we could replace $A$ by a smaller path that still contains $p+1$ vertices. Thus $|V(A) \cap S| \leq p$ (since $|A| \geq 4$, $v'$ and $w'$ are distinct vertices). Alternatively, if $|A|=p+3$, then by the same argument $v',w',v'',w'' \notin S$, and $|V(A) \cap S| \leq p$ (since $|A|=p+3 \geq 5$ and thus $|\{v',w',v'',w''\}| \geq 3$). In either case, since $|S| \geq p+1$, at least one vertex of $S$ is not in $A$; label one such vertex $x$. If the clockwise path from $w$ to $x$ and the clockwise path from $x$ to $v$ both contain at most $p$ vertices, then the clockwise $v$ to $w$ path contains at most $2p-1$ vertices, and hence $|C|- (2p-1)+2 \leq |A| \leq |C|+1-2p$, which is a contradiction. Without loss of generality, say the clockwise path from $w$ to $x$ contains at least $p+1$ vertices, and let this be $P$. Since $|V(A) \cap T| \geq p+2$, it follows $|(V(A)-\{w\}) \cap T| \geq p+1$, so the longest path with both end vertices in $T$ contained within $V(A)-\{w\}$ contains at least $p+1$ vertices, and we set it as $Q$. This proves the lemma.
%Choose $P$ to be the smallest path with $|V(P)| \geq p+1$. Define $j:=|V(P)|-p$, so $1 \leq j \leq 3$. It remains to find $Q$ within the path $C-V(P)$. If $C-V(P)$ contains at least $p+1$ vertices of $T$, then the longest path with both end vertices in $T$ contains at least $p+1$ vertices, as required for $Q$. Otherwise, $C-V(P)$ contains at most $p$ vertices of $T$, and so $P$ contains at least $p+2$ vertices of $T$. Hence $j \geq 2$. Let $v,w$ be the two endpoints of $P$ such that $P$ travels clockwise from $v$. Let $v',v''$ be the two vertices clockwise from $v$ and $w',w''$ the two vertices counter-clockwise from $w$. If $j=2$, then $v',w' \notin S$, or else we could replace $P$ with a slightly shorter path which still contains $p+1$ vertices. Hence $V(P)$ contains at most $p$ vertices of $S$. If $j=3$, then similarly $v',v'',w',w'' \notin S$, and $V(P)$ contains at most $p-1$ vertices of $S$. Thus in either case there is a vertex of $S$ not in $V(P)$. Call this vertex $x$.
\end{proof}

%Here is new Lemma 4
\begin{lemma}
\label{lemma:otherreroute}
Let $\mathcal{F} = \{F_1,\dots,F_{k-1}\}$ be a collection of $k-1 \geq 5$ disjoint cycles of order $r \geq 4$, and let $q_1,\dots,q_{k-1}$ be integers such that $1 \leq q_i < \tfrac{1}{3}r$. If $S,T \subseteq V(\mathcal{F})$ and $|S|,|T| > \tfrac{2}{3}(k-1)r - \tfrac{1}{3}r$, then at least one of the following cases holds:
\begin{itemize*}
\item There exists a path $P$ in some $F_i$ with exactly $q_i$ vertices, such that both end vertices of $P$ are in $S$, and there exists a vertex of $T$ in $V(F_i) - V(P)$.
\item There exist disjoint paths $P,Q$ in some $F_i$ such that the end vertices of $P$ are in $S$, the end vertices of $Q$ are in $T$, and $|P|,|Q| > \tfrac{1}{3}r.$
\end{itemize*}
\end{lemma}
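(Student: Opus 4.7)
The plan is to adapt the argument of Lemma~\ref{lemma:reroute} and invoke Lemma~\ref{lemma:imprweirdcase} to secure the second outcome whenever the first fails. Suppose, for contradiction, that neither outcome holds. Following Lemma~\ref{lemma:reroute}, orient each $F_i$ clockwise, let $f_i(v)$ be the vertex $q_i-1$ steps clockwise from $v$, and define $P_v$, $\mathcal{P}_i := \{P_v : v, f_i(v) \in S\}$, and $\mathcal{X}_i := \{x(v) : P_v \in \mathcal{P}_i\}$ (with $x(v)$ counterclockwise-adjacent to $v$) analogously. Failure of the first outcome forces $\mathcal{X}_i \cap T = \emptyset$ for every $i$, so $r \ge 2s_i - |\mathcal{P}_i|$ and $r \ge |\mathcal{P}_i| + t_i$, where $s_i := |S \cap V(F_i)|$ and $t_i := |T \cap V(F_i)|$. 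Summing yields $2|S|+|T| \le 2(k-1)r$, and combined with $|S|,|T| > \tfrac{(2k-3)r}{3}$ this leaves total slack $\sum_i (2r - 2s_i - t_i) < r$. The crucial dichotomy on each cycle is that either $\mathcal{P}_i = \emptyset$, in which case $s_i \le r/2$, or every $P \in \mathcal{P}_i$ contains $T \cap V(F_i)$, in which case $t_i \le q_i \le \lfloor r/3 \rfloor$ (and $t_i \le \lfloor r/3 \rfloor - 1$ when $3 \mid r$).

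I first handle $r \le 6$ directly. Then $q_i = 1$ for every $i$, and outcome 1 failure forces $T \cap V(F_i) = \emptyset$ whenever $s_i \ge 2$ and $t_i \le 1$ whenever $s_i = 1$, yielding $|S|+|T| \le (k-1)r$; this contradicts $|S|+|T| > \tfrac{2(2k-3)r}{3}$ for any $k \ge 4$. So assume $r \ge 7$ and set $p := \lfloor r/3 \rfloor$.

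Next I would combine the failure of outcome 2 with Lemma~\ref{lemma:imprweirdcase}: if any $F_i$ had $\min(s_i,t_i) \ge p+1$ and $\max(s_i,t_i) \ge 2p+2$, that lemma would produce the disjoint paths of size $> r/3$ required by outcome 2. Hence every cycle falls into one of case~(i) $s_i \le p$; case~(ii) $p+1 \le s_i \le 2p+1$ with $t_i \le 2p+1$; or case~(iii) $s_i \ge 2p+2$ with $t_i \le p$. Since $2p+2 > r/2$, case~(iii) lies entirely in $B_0 := \{i : \mathcal{P}_i \neq \emptyset\}$; its complement $A_0$ consists only of cycles in cases~(i) and~(ii). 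A direct computation shows each $A_0$ cycle satisfies $2s_i + t_i \le r + 2p + 1$ and so contributes at least $r - 2p - 1 \ge \lceil r/3 \rceil - 1$ to the total slack, whence $|A_0|$ is at most roughly $r/(r-2p-1)$, a small constant.

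The contradiction then follows from the lower bound on $|T|$: since $t_i \le r$ on $A_0$ and $t_i \le q_i \le p$ on $B_0$, we have $|T| \le r|A_0| + p|B_0|$, which together with $|A_0|+|B_0| = k-1$ and $|T| > \tfrac{(2k-3)r}{3}$ forces $|A_0|$ to grow at least linearly in $k$. For $k \ge 6$ this linear lower bound exceeds the constant upper bound obtained from the slack analysis, closing the contradiction. The hard part will be the bookkeeping in this last step—handling the three residues of $r \bmod 3$ uniformly, and leveraging the tighter bound $q_i \le p-1$ when $3 \mid r$ to close the arithmetic for the smallest values $k=6, 7$.
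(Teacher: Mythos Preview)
Your overall strategy is sound and genuinely different from the paper's. The paper classifies the cycles into four types, proves via an auxiliary ``enlarged'' set $Y_i \supseteq X_i$ that at most one cycle can be of type~1, and then reduces the remaining case $\theta_1 \le 1$ to a finite integer program that is checked by computer. You instead split only into $A_0$ (where $\mathcal{P}_i=\emptyset$) and $B_0$, and try to finish by a direct slack-versus-$|T|$ comparison. That is more elementary and avoids the computer check, which is attractive.

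However, the bounds you actually state do not close the contradiction for $k=6$. Take $r=7$, so $p=2$ and the threshold is $|S|,|T|\ge 22$. Your per-$A_0$ slack bound is $r-2p-1=2$, so the slack inequality gives only $|A_0|\le 3$; your $|T|$-bound $|T|\le r|A_0|+p|B_0|=7|A_0|+2(5-|A_0|)$ gives $|A_0|\ge 3$. Thus $|A_0|=3$, $|B_0|=2$ is consistent with everything you wrote, and no contradiction follows. The same happens for $r=9,10$, and the ``tighter bound $q_i\le p-1$ when $3\mid r$'' does not rescue $r=9$: one still gets $3\le |A_0|\le 4$. The loss comes from lumping your cases (i) and (ii) together inside $A_0$: in case~(i) the slack is at least $r-2p$, and in case~(ii) at least $2r-2\lfloor r/2\rfloor-(2p+1)$, and you also have not used the separate upper bound on $|S|$ (namely $s_i\le p$ in case~(i) and $s_i\le \lfloor r/2\rfloor$ in case~(ii)). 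If you introduce separate counts $\alpha,\beta,\gamma$ for case~(i)${}\cap A_0$, case~(ii)${}\cap A_0$, and $B_0$, and use both $|S|>\tfrac{(2k-3)r}{3}$ and $|T|>\tfrac{(2k-3)r}{3}$ together with the slack bound, the system does become infeasible for $k-1\ge 5$; but this finer three-way split and the simultaneous use of the $|S|$ bound are precisely what is missing from your writeup. As written, the ``linear lower bound exceeds the constant upper bound'' claim is false for $k=6,7$, and the promised bookkeeping is where the actual argument lives.
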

\begin{proof}
Note that the first part of this proof is very similar to Lemma~\ref{lemma:reroute}.
For the sake of a contradiction, assume neither outcome occurs. For each $v \in V(F_i)$, let $f(v)$ denote the vertex $q_i -1$ vertices clockwise from $v$, and let $P_v$ be the $q_i$-vertex path from $v$ clockwise to $f(v)$. Let $f(S):=\{f(v) : v \in S\}$, and note $|f(S)|=|S|$ since $f$ is a $1-1$ correspondence. Let $\mathcal{P} := \{P_v : v,f(v) \in S\}$. If $w \in S \cap f(S)$, then $w = f(v)$ such that $P_v \in \mathcal{P}$. Conversely, if $P_v \in \mathcal{P}$ then $f(v) \in S \cap f(S)$. Hence $|\mathcal{P}| = |S \cap f(S)|$. Thus,
\begin{align}
(k-1)r \geq |S \cup f(S)| = |S| + |f(S)| - |S \cap f(S)| = 2|S| - |\mathcal{P}|.\label{eq:eq_third}
\end{align}

Let $x(v)$ denote the vertex counter-clockwise from $v$. Then $x(v) \notin V(P_v)$ since $q_i < \tfrac{1}{3}r < r$. Let $\mathcal{X} := \{x(v) : P_v \in \mathcal{P}\}$. As $x$ is $1-1$, $|\mathcal{P}|=|\mathcal{X}|$. Let $X_i := \mathcal{X} \cap V(F_i)$ and let $\mathcal{P}_i := \{P_v \in \mathcal{P}: v \in V(F_i)\}$. We classify each of $F_1, \dots, F_{k-1}$ as one of the four following types. If $\mathcal{P}_i \neq \emptyset$ and there exists at least one vertex $w \in V(F_i)$ such that  $w \in V(P_v)$ for all $P_v \in \mathcal{P}_i$, then $F_i$ has \emph{type 1}. If $\mathcal{P}_i \neq \emptyset$ but there is no such $w$, then $F_i$ has \emph{type 2}. Otherwise $\mathcal{P}_i = \emptyset$; if $|S \cap V(F_i)| \leq \tfrac{1}{3}r$, then $F_i$ has \emph{type 3}, otherwise it has \emph{type 4}.

We now define a set $Y_i$ for each $F_i$. If $F_i$ has type 2, 3 or 4, let $Y_i := X_i$. (Note if $F_i$ has type 3 or 4, then $X_i = \emptyset$.) If $F_i$ has type 1, label the vertices of $F_i$ clockwise by $1,2,\dots,r$, starting one vertex clockwise from $w$, where $w \in V(P_v)$ for all $P_v \in \mathcal{P}_i$. Let $v$ be the vertex of minimum index such that $P_v \in \mathcal{P}_i$. Let $J := V(F_i) - P_v$. The set $J$ starts at $x(v)$ and travels counter-clockwise; it may reach as far as vertex $1$, but does not include vertex $r$ by our choice of labelling (since $w = r$ is in $P_v$). Also, $J \cap X_i = \{x(v)\}$, by our choice of $v$. As $q_i < \tfrac{1}{3}r$, we have $|J| > \tfrac{2}{3}r$. Let $Y_i = X_i \cup J$, and note $|Y_i| > |X_i| + \tfrac{2}{3}r - 1$ when $F_i$ is type 1. %Make sure type breakdown and numbers are all okay

Define $\mathcal{Y} := \cup_{i=1}^{k-1} Y_i$. If $\mathcal{Y} \cap T \neq \emptyset$, then there exists a vertex of $T$ in a cycle of type 1 or 2 that avoids a path of $\mathcal{P}$ in the same cycle; together this satisfies the first outcome. Now assume that $\mathcal{Y} \cap T = \emptyset$. Let $\theta_1$ denote the number of type 1 cycles. First suppose that $\theta_1 \geq 2$. Thus $|\mathcal{Y}| > |\mathcal{X}| + 2(\tfrac{2}{3}r-1)$. Hence $(k-1)r \geq |\mathcal{Y} \cup T| = |\mathcal{Y}| + |T| > |\mathcal{X}| + 2(\tfrac{2}{3}r-1) + |T| = |\mathcal{P}| + |T| + \tfrac{4}{3}r-2.$

Let $r = 3a +b$ where $a,b \in \mathbb{Z}$ and $a \geq 1$ and $b \in \{0,1,2\}$. Thus $(k-1)r > |\mathcal{P}| + |T| + 4a + \tfrac{4}{3}b -2$, and so 
\begin{align}
(k-1)r \geq |\mathcal{P}| + |T| + 4a + b + \floor{\tfrac{1}{3}b}+1-2 = |\mathcal{P}| + |T| + 4a + b - 1. \label{eq:eq_fourth}
\end{align}

Inequalities~\eqref{eq:eq_third} and \eqref{eq:eq_fourth} imply
\begin{align*}
2(k-1)r \geq 2|S| - |\mathcal{P}| + |\mathcal{P}| + |T| + 4a +b -1 > 2(k-1)r - r + 4a+b-1.
\end{align*}
Thus $3a+b = r \geq 4a+b$, contradicting $a \geq 1$. Hence we may assume $\theta_1 \leq 1$.

Define $\theta_2,\theta_3,\theta_4$ to be the number of type $2,3,4$ cycles respectively. Note $\theta_1 + \theta_2 + \theta_3 + \theta_4 = k-1$. Let $S_i := S \cap V(F_i)$ and $T_i := T \cap V(F_i)$. We now prove some bounds on $|S_i|$ and $|T_i|$.
\begin{itemize}
\item If $F_i$ has type 1, then $|S_i| \leq r$ trivially, and since $T_i \cap Y_i = \emptyset$ and $Y_i = X_i \cup J$ and $|J| > \tfrac{2}{3}r$, it follows that $|T_i| < \tfrac{1}{3}r$.
\item If $F_i$ has type 2, then $|S_i| \leq r$ trivially. Since for every $w \in V(F_i)$ there exists some $P_v \in \mathcal{P}_i$ such that $w \notin V(P_v)$, if $w \in T_i$ then $w$ together with $P_v$ satisfy the first outcome. Thus we may assume that $T_i=\emptyset$.
\item If $F_i$ has type 3, then $|S_i| \leq \floor{\tfrac{1}{3}r}$ by definition, and $|T_i| \leq r$ trivially.
\item If $F_i$ has type 4, then $|S_i| > \tfrac{1}{3}r$ by definition. If $r \leq 6$, then $q_i < 2$, and hence $q_i=1$. But then $\mathcal{P}_i = S_i \neq \emptyset$, contradicting the definition of type 4. Hence $r > 6$. Let $p:=\floor{\tfrac{r}{3}}$, and note $|S_i| \geq p+1$. If $|T_i| \geq 2p+2$, then by Lemma~\ref{lemma:imprweirdcase} there exist disjoint paths $P,Q$ in $F_i$ such that the end vertices of $P$ are in $S_i \subseteq S$ and the end vertices of $Q$ are in $T_i \subseteq T$ and $|P|,|Q|\geq p+1 > \tfrac{r}{3}$. This would satisfy the second outcome, and hence we may assume that $|T_i| \leq 2p+1$.\newline If $r \geq 9$, then $|T_i| \leq \tfrac{2}{3}r+1 = \tfrac{7}{9}r - \tfrac{1}{9}r+1 \leq \tfrac{7}{9}r$. Alternatively $7 \leq r \leq 8$, and $|T_i| \leq 2p+1 = 2\floor{\tfrac{8}{9}} +1 = 5 < \tfrac{7}{9}(7) \leq \tfrac{7}{9}r$. Thus in all cases $|T_i| \leq \tfrac{7}{9}r$.\newline Finally, for each $v \in S_i$, recall $f(v)$ is the vertex $q_i-1$ vertices clockwise from $v$ and let $f(S_i) := \{f(v):v \in S\}$. Since $\mathcal{P}_i = \emptyset$ we have $S_i \cap f(S_i) = \emptyset$. Since $f$ is 1-1 it follows that $r \geq |S_i \cup f(S_i)| = |S_i|+|f(S_i)|=2|S_i|$, and thus $|S_i| \leq \tfrac{1}{2}r$.
\end{itemize}
%
%The following table summarises these upper bounds.
%\renewcommand{\arraystretch}{1.35}
%\begin{center}
%\begin{tabular}{|c || c | c|}
%\hline
% & $\tfrac{6|S_i|}{r} \leq$ & $\tfrac{9|T_i|}{r} \leq$ \\
%\hline \hline
%\emph{Type 1} & $6$ & $3$ \\
%\hline
%\emph{Type 2} & $6$ & $0$ \\
%\hline
%\emph{Type 3} & $2$ & $9$ \\
%\hline
%\emph{Type 4} & $3$ & $7$ \\
%\hline
%\end{tabular}
%\end{center}
Thus
\begin{align}
\tfrac{2}{3}(k-1) - \tfrac{1}{3} < \tfrac{1}{r}|S| &= \tfrac{1}{r}\sum_{i=1}^{k-1} |S_i| \leq \theta_1 + \theta_2 + \tfrac{1}{3}\theta_3 + \tfrac{1}{2}\theta_4 \text{, and} \label{eq:Sbound}\\
\tfrac{2}{3}(k-1) - \tfrac{1}{3} < \tfrac{1}{r}|T| &= \tfrac{1}{r}\sum_{i=1}^{k-1} |T_i| \leq \tfrac{1}{3}\theta_1 + \theta_3 + \tfrac{7}{9}\theta_4. \label{eq:Tbound}
\end{align}

Inequalities \eqref{eq:Sbound} and \eqref{eq:Tbound}, together with $k-1 = \theta_1 + \theta_2 + \theta_3 + \theta_4$ and $\theta_1 \leq 1$ form a set of integer linear inequalities. If $k-1 \geq 6$, then it is easily shown by hand or by computer that this set of inequalities has no feasible solution\footnotemark[2]\footnotetext[2]{This was computed using AMPL and Couenne 0.4.3, an Open-Source solver for Mixed Integer Nonlinear Optimization, available at \url{http://www.neos-server.org}.}. 

Finally, assume that $k-1=5$. In this case, by the above upper bounds on $|S_i|$ and $|T_i|$, we may replace \eqref{eq:Sbound} and \eqref{eq:Tbound} with the following more precise bounds: 
\begin{align*}
\tfrac{2}{3}(5)r - \tfrac{1}{3}r < |S| &\leq r\theta_1 + r\theta_2 + \floor{\tfrac{1}{3}r}\theta_3 + \floor{\tfrac{1}{2}r}\theta_4 \text{,} \qquad\qquad \!\text{and}\\
\tfrac{2}{3}(5)r - \tfrac{1}{3}r < |T| &\leq (\ceil{\tfrac{1}{3}r}-1)\theta_1 + r\theta_3 + (2\floor{\tfrac{1}{3}r}+1)\theta_4.
\end{align*}
Let $r=3a+b$ where $a \geq 1$ and $b \in \{0,1,2\}$. That is,
\begin{align}
9a+3b+1 &\leq (3a+b)(\theta_1 + \theta_2) + a\theta_3 + \tfrac{1}{2}(3a+b)\theta_4 \label{eq:Sstrong}\\
9a+3b+1 &\leq (\ceil{a+\tfrac{1}{3}b}-1)\theta_1 + (3a+b)\theta_3 + (2a+1)\theta_4. \label{eq:Tstrong}
\end{align}
%
%Let $r=3a+b$ where $a \geq 1$ and $b \in \{0,1,2\}$.
%\begin{align}
%\tfrac{2}{3}(5)r - \tfrac{1}{3}r < |S| &\leq r\theta_1 + r\theta_2 + \floor{\tfrac{1}{3}r}\theta_3 + \tfrac{1}{2}r\theta_4 \nonumber\\
%9a+3b+1 &\leq (3a+b)(\theta_1 + \theta_2) + a\theta_3 + \tfrac{1}{2}(3a+b)\theta_4 \label{eq:Sstrong}
%\end{align}
%\begin{align}
%\tfrac{2}{3}(5)r - \tfrac{1}{3}r < |T| &\leq (\ceil{\tfrac{1}{3}r}-1)\theta_1 + r\theta_3 + (2\floor{\tfrac{1}{3}r}+1)\theta_4 \nonumber\\
%9a+3b+1 &\leq (\ceil{a+\tfrac{1}{3}b}-1)\theta_1 + (3a+b)\theta_3 + (2a+1)\theta_4 \label{eq:Tstrong}
%\end{align}

Inequalities \eqref{eq:Sstrong} and \eqref{eq:Tstrong}, together with $\theta_1 \in \{0,1\}$ and $b \in \{0,1,2\}$ and $\theta_1 + \theta_2 + \theta_3 + \theta_4 = 5$ form a set of integer quadratic inequalities. This set has no solution since there are finitely many choices for $\theta_1,\theta_2,\theta_3,\theta_4$ and $b$, and for each such choice, \eqref{eq:Sstrong} and \eqref{eq:Tstrong} reduce to linear inequalities in $a$, which have no solution\footnotemark[2].
%
%This set has no feasible solution; we omit the proof of this fact, but note that since there are a finite number of choices for $\theta_1,\theta_2,\theta_3, \theta_4$ and $b$ we can convert \eqref{eq:Sstrong} and \eqref{eq:Tstrong} into a collection of sets of integer linear equations in one variable, which can easily be solved by a computer. 

Note that if $k-1 \in \{1,2,3,4\}$ then there are choices of $\theta_1,\theta_2,\theta_3,\theta_4$ and $r$ that satisfy the integer linear inequalities implied by our bounds on $|S_i|$ and $|T_i|$.
\end{proof}

\begin{lemma}
\label{lemma:lemmafordouble}
Let $C$ be a cycle and $S,T \subseteq V(C)$ such that $|S|,|T| > \tfrac{1}{3}|C|$. Then there exists a path $P$ with one end vertex in $S$ and the other in $T$ such that $2 \leq |P| \leq \tfrac{1}{6}|C|+4$.
\end{lemma}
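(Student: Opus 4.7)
My plan is to argue by contradiction. Set $n = |C|$ and $k = \floor{n/6} + 3$, so that a vertex-path of length at most $\tfrac{1}{6}|C| + 4$ corresponds to an edge-path of length at most $k$. Suppose every pair $s \in S, t \in T$ with $s \neq t$ has cyclic distance $d(s, t) > k$ in $C$. I would study the arcs of $V(C) \setminus T$ --- the maximal runs of consecutive non-$T$ vertices between $T$-vertices --- with lengths $g_1, \dots, g_{|T|}$ summing to $n - |T|$. Every $s \in S \setminus T$ must lie in some arc at distance $> k$ from both endpoints, so that arc has length $> 2k$. Since $4k + 2 > \tfrac{2}{3}n > n - |T|$, at most one such ``long'' arc $g^*$ exists, and the same holds symmetrically for arcs of $V(C) \setminus S$.

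If no long arc exists in $V(C) \setminus T$, then $S \subseteq T$, so any two distinct vertices of $S$ must be at cyclic distance $> k$, forcing $|S|(k+1) \leq n$. But $|S| > n/3$ and $k+1 \geq 4$, so $|S|(k+1) > 4n/3 > n$, a contradiction. Hence exactly one long arc $g^*$ exists, and $|S \setminus T| \leq g^* - 2k \leq (n - |T|) - 2k$. Rearranging yields $|S \cap T| \geq |S| + |T| - n + 2k > 2k - n/3 \geq 4$ (using $k \geq n/6 + 2$), so $|S \cap T| \geq 5$.

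For the matching upper bound, I would show that for each $s \in S \cap T$, the two arcs of $V(C) \setminus T$ flanking $s$ must have length $\geq k$, since otherwise a nearby $T$-vertex would lie within distance $k$ of $s$. Since $g^*$ has only two endpoints, at most two vertices of $S \cap T$ are adjacent to $g^*$; the remaining $\geq |S \cap T| - 2$ vertices are flanked on both sides by ``medium'' arcs (non-long arcs of length in $[k, 2k]$). Counting arc-endpoint incidences (each medium arc has two endpoints in $T$) shows that the number $r$ of medium arcs satisfies $r \geq |S \cap T| - 2$, so their total length is at least $(|S \cap T| - 2) k$ but at most $n - |T| - g^* < n - |T| - 2k$. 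Thus $|S \cap T| \cdot k < n - |T| < \tfrac{2}{3} n$, giving $|S \cap T| < 2n/(3k) < 4$ since $k > n/6$. This contradicts $|S \cap T| \geq 5$.

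The hard part will be the final double-counting: verifying that flanking arcs of $S \cap T$-vertices are never ``short'' (length $< k$), and that each medium arc serves as a flanking arc for at most two $S \cap T$-vertices (once for each of its two $T$-endpoints). Once the inequality $r \geq |S \cap T| - 2$ is secured, the arithmetic contradiction is immediate.
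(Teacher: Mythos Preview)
Your argument is correct and genuinely different from the paper's. Both verifications you flag as ``the hard part'' go through easily: for $s\in S\cap T$ with clockwise flanking gap of length $g$, the next $T$-vertex $t'$ satisfies $d(s,t')=\min(g+1,\,n-g-1)>k$, so in particular $g\geq k$; and each medium gap has exactly two $T$-endpoints, so it flanks at most two vertices of $S\cap T$, giving $2r\geq 2(|S\cap T|-2)$ as you claim.

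The paper's proof is a direct geometric case split rather than a contradiction via gap-counting. It distinguishes whether $S$ contains two vertices $v,w$ that are far apart in both directions (at least $\tfrac{1}{3}|C|+2$ vertices between them each way): if so, the four blocks of $\lceil\tfrac{1}{6}|C|\rceil+1$ vertices on either side of $v$ and of $w$ together cover more than $\tfrac{2}{3}|C|$ vertices and hence meet $T$; if not, $S$ lies in a short arc, and one extends that arc by $\lfloor\tfrac{1}{6}|C|\rfloor+3$ on each end to again cover more than $\tfrac{2}{3}|C|$ vertices. Your approach trades this two-case geometry for a uniform combinatorial count on the gaps of $T$, squeezing $|S\cap T|$ between $5$ and $3$. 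Your route is arguably cleaner and more mechanical; the paper's is more visual and avoids the bookkeeping of classifying gaps into short/medium/long. Either way the constant $\tfrac{1}{6}|C|+4$ falls out naturally.
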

\begin{proof}
Suppose there exist vertices $v,w \in S$ such that there are at least $\tfrac{1}{3}|C|+2$ vertices between $v$ and $w$ (non-inclusive) in both directions around $C$. If there exists a vertex $u \in T$ within the $\ceil{\tfrac{1}{6}|C|}+1$ vertices counter-clockwise from $v$, then the path from $u$ clockwise to $v$ has at least two vertices and at most $\ceil{\tfrac{1}{6}|C|}+2 \leq \tfrac{1}{6}|C|+3$ vertices, as required. A similar result holds for the $\ceil{\tfrac{1}{6}|C|}+1$ vertices clockwise from $v$, the $\ceil{\tfrac{1}{6}|C|}+1$ vertices counter-clockwise from $w$, and the $\ceil{\tfrac{1}{6}|C|}+1$ vertices clockwise from $w$. Label these sets $Z_1,Z_2,Z_3,Z_4$ respectively. If $Z_1 \cap Z_4 = \emptyset$ then $|Z_1 \cup Z_4| = 2(\ceil{\tfrac{1}{6}|C|}+1) \geq \tfrac{1}{3}|C|+2$, otherwise $Z_1 \cup Z_4$ covers all vertices from $v$ counter-clockwise to $w$ and so $|Z_1 \cup Z_4| \geq \tfrac{1}{3}|C|+2$. A similar result holds for $Z_2 \cup Z_3$, and hence $|Z_1 \cup Z_2 \cup Z_3 \cup Z_4| \geq \tfrac{2}{3}|C|+4$. Given that $|T| > \tfrac{1}{3}|C|$ it follows $(Z_1 \cup Z_2 \cup Z_3 \cup Z_4) \cap T \neq \emptyset$ and our desired path can be constructed.

Hence we may assume that any two vertices in $S$ have less than $\tfrac{1}{3}|C|+2$ vertices of $C$ between them. Suppose $u \in T$ is an interior vertex on the short path $D$ between two vertices $v,w \in S$. Let $A \subset D$ be the path from $v$ to $u$, and let $B \subset D$ be the path from $u$ to $w$. Clearly $|A|,|B| \geq 2$. If $|A|,|B| > \tfrac{1}{6}|C|+4$, then $\tfrac{1}{3}|C| + 4 > |D| = |A| + |B| - 1 > \tfrac{1}{3}|C| + 7$, which is a contradiction. Hence either $A$ or $B$ satisfies our lemma, and as such we may suppose no such $u$ exists. 

Fix a vertex $v \in S$. Let $L \subseteq S$ be the set of vertices $w$ where a short path from $v$ to $w$ travels counter-clockwise from $v$; let $R$ be the equivalent clockwise set. If $L = \emptyset$ and $R=\emptyset$, then $S = \{v\}$, but $|S| > \tfrac{1}{3}|C| \geq 1$, which is a contradiction. Thus, without loss of generality, $L \neq \emptyset$. Let $x \in L$ be the vertex of greatest counter-clockwise distance from $v$, and define $y$ to be the vertex of greatest clockwise distance from $v$ in $R$ (if $R \neq \emptyset$) or let $y:=v$ if $R=\emptyset$. Let $A$ be the path from $x$ clockwise to $v$ and $B$ the path from $v$ clockwise to $y$. The interior vertices of $A$ and $B$ are not in $T$ as they are both short paths. Let $Q:= A \cup B$, and note $Q$ contains all vertices of $S$. Construct $Q'$ from $Q$ by adding the $\floor{\tfrac{1}{6}|C|}+3$ vertices counter-clockwise from $x$ and the $\floor{\tfrac{1}{6}|C|}+3$ vertices clockwise from $y$; none of these added vertices may be in $T$ without creating a satisfactory path. Thus $(Q' - \{x,y,v\}) \cap T = \emptyset$ and either $|Q'| > \tfrac{1}{3}|C| + 2(\floor{\tfrac{1}{6}|C|}+3) \geq \tfrac{1}{3}|C| + 2(\tfrac{1}{6}|C|+2) = \tfrac{2}{3}|C|+4$ (if these added vertices do not overlap) or $Q' = C$ (if they do). In the first case, $|Q' - \{x,y,v\}| > \tfrac{2}{3}|C|+1$ and as such $Q' - \{x,y,v\}$ intersects $T$, thus creating a satisfactory path. In the second case, $T \subseteq \{x,y,v\}$. Given $|T| > \tfrac{1}{3}|C|$, it follows $|C| \leq 8$.

Given that $|S|,|T| \geq 2$, choose a vertex $a \in S$ and $b \in T-\{a\}$. Since $|C| \leq 8$, there is a path from $a$ to $b$ containing at most $\tfrac{|C|+2}{2} = \tfrac{1}{6}|C| + 4 + \tfrac{1}{3}|C|-3 < \tfrac{1}{6}|C|+4$ vertices, as required.
%
%If $|C| \leq 7$, then fix a vertex $v \in S$ and a vertex $w \in T-\{v\}$. As $C$ has diameter at most three, there is a path from $v$ to $w$ containing at most four vertices. If $|C|=8$, fix a vertex $v \in S$ and a vertex $w \in T-\{v\}$; the path $P$ from $v$ to $w$ contains at most $5 \leq \tfrac{1}{6}(8)+4$ vertices, as required.
\end{proof}

\section{Open Problems}
We conclude by noting some possible extensions of our results. We mentioned the following conjecture previously.
\begin{conjecture}[\citet{superfast5}]
\label{conjecture:reedwood}
Every graph with average degree at least $\tfrac{4}{3}t-2$ contains every $2$-regular $t$-vertex graph as a minor.
\end{conjecture}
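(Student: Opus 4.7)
My plan is to adapt the framework of the proof of Theorem~\ref{theorem:minimal} to the inhomogeneous setting of Conjecture~\ref{conjecture:reedwood}. Fix a $2$-regular $t$-vertex graph $H$ with cycle-components of orders $r_1 \geq r_2 \geq \cdots \geq r_k \geq 3$, so $\sum_{i=1}^{k} r_i = t$. As in Section~2, it suffices via the minor/uncontraction correspondence to show that every minimal graph $G$ with $d(G) \geq \tfrac{4}{3}t - 2$ contains $k$ disjoint cycles $D_1, \dots, D_k$ with $|D_i| \geq r_i$ for each $i$.

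The skeleton of the argument from Section~\ref{section:main} should transfer. I would let $\mathcal{C}$ be a collection of disjoint cycles of orders at most $r_1$, chosen lexicographically to maximize $(|\mathcal{C}(r_1)|, |\mathcal{C}(r_1-1)|, \dots, |\mathcal{C}(1)|)$, and assume for contradiction that no $k$-element subfamily of $\mathcal{C}$ meets the profile $(r_1, \dots, r_k)$. Claims~\ref{claim:truepartition}--\ref{claim:nolowedge} survive verbatim, as they depend only on the lexicographic choice of $\mathcal{C}$. The rerouting framework of Lemma~\ref{lemma:reroute} already tolerates per-cycle parameters $q_i$, so when ``completing'' a cycle $C$ of $\mathcal{U}$ by stealing a path from some $F_i \in \mathcal{C}(r_1)$ earmarked for the target order $r_j$, I would set $q_i := r_j - |C|$. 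Claims~\ref{claim:firstroute}--\ref{claim:firstrouteb} and the structural Claims~\ref{claim:kminus1cycles}, \ref{claim:smallissmall}, and \ref{claim:structure} should then carry over with only notational adjustment.

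The first real difficulty is the generalization of Lemma~\ref{lemma:otherreroute}: the type-$1$ through type-$4$ classification that currently solves an integer linear program must now accommodate cycles of different orders $r_j$ with different $q_j$. I would expect a parametrized version in which each count $\theta_i$ splits into counts $\theta_i(j)$ per target order, with the resulting mixed-integer feasibility check again reducible to a finite case analysis (possibly machine-assisted as in the present paper). The ancillary Lemma~\ref{lemma:lemmafordouble}, being purely geometric on a single cycle, needs no change.

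The main obstacle, however, is the exact constant $-2$. The present proof ends with $\tfrac{1}{3}(k-1)r \leq -1$, a contradiction that already exploits the full slack $d(G) \leq |V(G)| - 1$ together with the equality of all $r_i$. When the targets are unequal the natural analogue of the final counting becomes $\tfrac{1}{3}\sum_{i \geq 2}(r_1 - r_i) \leq O(1)$, and this gap is not absorbed by the $-2$: it can be as large as $(k-1)(r_1 - r_k)$. Closing it will almost certainly require replacing the ``maximize-$|\mathcal{C}(r_1)|$-first'' rule by a multiset-matching rule tailored to the profile $(r_1, \dots, r_k)$, and sharpening the second outcome of Lemma~\ref{lemma:otherreroute} to deliver two paths of exactly prescribed lengths rather than merely lengths exceeding $\tfrac{1}{3}r$. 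I expect this to be the genuinely hard step: the paper itself notes that its bound falls short of $-2$ even in the equal-order case, which strongly suggests that the full conjecture needs an idea beyond the framework developed here.
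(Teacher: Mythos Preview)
There is nothing to compare against: the paper does not prove Conjecture~\ref{conjecture:reedwood}. It is stated in Section~5 as an open problem attributed to Reed and Wood, and the paper's contribution is only the special case in which every component of $H$ has the same order $r$ (Theorem~\ref{theorem:cycles}), and even then with the weaker bound $\tfrac{4}{3}kr$ rather than $\tfrac{4}{3}kr-2$. So your proposal cannot be checked against a ``paper's own proof'' --- none exists.

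As a plan of attack your outline is reasonable in the parts that you yourself flag as routine: the lexicographic choice of $\mathcal{C}$, Claims~\ref{claim:truepartition}--\ref{claim:nolowedge}, and the single-cycle geometry of Lemma~\ref{lemma:lemmafordouble} really are insensitive to whether the target cycle lengths are equal. But you are also right that the argument breaks down exactly where you say it does. The final vertex count in Section~\ref{section:main} uses $|V(\mathcal{C}(r))|=(k-1)r$ and $|V(\mathcal{U})|\leq \tfrac{4}{3}r$ to force $\tfrac{1}{3}(k-1)r\leq -1$; with unequal $r_i$ the analogous count leaves a gap proportional to $\sum_{i\geq 2}(r_1-r_i)$, which the $-2$ cannot absorb. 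Your suggestion to replace the ``maximize $|\mathcal{C}(r_1)|$ first'' rule by a profile-matching rule is a natural idea, but note that doing so destroys the monotonicity that makes Claims~\ref{claim:upstairstriangle} and~\ref{claim:otherupstairstriangle} work, so the structural claims would not carry over with ``only notational adjustment'' as you suggest. Likewise, a version of Lemma~\ref{lemma:otherreroute} with prescribed path lengths is genuinely stronger than what is proved, and the integer-programming step already requires $k\geq 6$ in the homogeneous case; there is no reason to expect the parametrized system to be feasible-free without further hypotheses.

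In short, your proposal is an honest sketch that correctly locates the obstruction, but it is not a proof, and the paper offers none either: Conjecture~\ref{conjecture:reedwood} remains open.
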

Conjecture~\ref{conjecture:reedwood} essentially extends Theorem~\ref{theorem:cycles} by removing the requirement that all cycles in the desired minor have the same order.

Recall that the original result of \citet{corrhaj} was a stronger result in terms of minimum degree (rather than average degree). The following conjecture would imply our result, and be a more direct generalisation of the \citeauthor{corrhaj} Theorem.
\begin{conjecture}
Every graph with minimum degree at least $\tfrac{2}{3}kr$ and at least $kr$ vertices contains $k$ disjoint cycles, each containing at least $r$ vertices.
\end{conjecture}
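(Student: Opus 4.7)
The plan is to adapt the lexicographic-maximum cycle argument of Theorem~\ref{theorem:minimal} to the minimum-degree setting. Assume for contradiction a graph $G$ with $\delta(G) \geq \tfrac{2}{3}kr$ and $|V(G)| \geq kr$ that contains no $k$ disjoint cycles of length at least $r$. Choose a collection $\mathcal{C}$ of disjoint cycles, each of length at most $r$ (allowing $K_1,K_2$ as degenerate cycles), lexicographically maximising $(|\mathcal{C}(r)|,|\mathcal{C}(r-1)|,\ldots,|\mathcal{C}(1)|)$, exactly as in Section~\ref{section:main}. The local exchange arguments --- Claims~\ref{claim:truepartition}--\ref{claim:nolowedge}, \ref{claim:firstroute}--\ref{claim:firstrouteb}, \ref{claim:restrictneighbours} --- together with the ancillary lemmas of Section~\ref{sec:anc}, depend only on the optimality of $\mathcal{C}$ and transfer verbatim. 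The case $r=3$ coincides with the theorem of \citeauthor{corrhaj} and is therefore already known, so one may assume $r \geq 4$.

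The crux is the remaining claims (\ref{claim:kminus1cycles}, \ref{claim:secondroute}, \ref{claim:notwobig}, \ref{claim:Aisbig}, \ref{claim:smallissmall}, \ref{claim:structure}) that bound $|W(vw)|$ from below using $\tau(G) > \tfrac{2}{3}kr - 1$. In the minimum-degree setting the natural substitute is the inclusion--exclusion bound
\[
\tau(G) \;\geq\; 2\delta(G) - n \;\geq\; \tfrac{4}{3}kr - n,
\]
which gives $\tau(G) \geq \tfrac{1}{3}kr$ when $n=kr$ but becomes useless when $n \gg kr$. Two subtasks then remain: first, reduce to the regime $n \leq kr + O(r)$, so that the above substitute is nontrivial; second, push through Section~\ref{section:main} with the weakened $\tau$-type bound and (necessarily) tighter arithmetic. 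The reduction step could be attempted by a vertex-deletion argument: if $n > kr$, delete a vertex preserving $\delta(G) \geq \tfrac{2}{3}kr$ and $|V(G)| \geq kr$, unless the set of minimum-degree vertices of $G$ is highly structured (e.g.\ is a small dominating set), in which case one argues separately using that extremal structure.

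The main obstacle is the factor-of-two gap between the paper's $\tau \geq \tfrac{2}{3}kr - 1$ (forced by the average-degree hypothesis $d \geq \tfrac{4}{3}kr$) and the substitute $\tau \geq \tfrac{1}{3}kr$ available here, even after reducing to $n=kr$. This loss is just barely enough for the earlier claims but sharply tightens the arithmetic in Claims~\ref{claim:Aisbig}--\ref{claim:structure}, where constants such as $\tfrac{2}{3}(k-1)r$ arise naturally. Closing the gap will most likely require a structural classification of graphs meeting the minimum-degree hypothesis tightly --- analogous to the case analysis in \citeauthor{corrhaj}'s original proof --- together with a stronger version of the rerouting Lemmas~\ref{lemma:reroute} and \ref{lemma:otherreroute} that exploits longer subpaths inside cycles of $\mathcal{C}(r)$. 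It is the interaction between these two components --- structural reduction of $n$ and strengthened rerouting --- that I expect to be the technical heart of a full proof.
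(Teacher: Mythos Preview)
The statement you are attempting is not a theorem but a \emph{conjecture} in the paper (it appears in Section~5, Open Problems), and the paper gives no proof of it. So there is no ``paper's own proof'' to compare your attempt against: the authors explicitly leave this as open, noting only that it would imply Theorem~\ref{theorem:intermsofsubgraphs} and would more directly generalise the Corr\'adi--Hajnal theorem.

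Your writeup is candid that it is a strategy rather than a proof, and the obstacle you isolate is exactly the right one. The entire machinery of Section~\ref{section:main} hinges on the bound $\tau(G) > \tfrac{1}{2}d(G) - 1 \geq \tfrac{2}{3}kr - 1$, which comes from the \emph{minimality} of $G$ under edge contraction --- a notion that has no analogue under a pure minimum-degree hypothesis. Your substitute $\tau(G) \geq 2\delta(G) - n$ is the only general bound available, and as you note it loses a factor of two even at $n = kr$ and collapses entirely for larger $n$. The ``reduction to $n \leq kr + O(r)$'' step you propose is itself not justified: deleting a vertex of degree $\delta(G)$ can drop the minimum degree below $\tfrac{2}{3}kr$, and there is no reason the minimum-degree vertices should form a structured set. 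Likewise, even granting $n = kr$ and $\tau(G) \geq \tfrac{1}{3}kr$, the inequalities in Claims~\ref{claim:kminus1cycles}, \ref{claim:Aisbig}, \ref{claim:smallissmall} and \ref{claim:structure} all require $|W(vw)| > \tfrac{2}{3}(k-1)r$, which needs roughly $\tau(G) \gtrsim \tfrac{2}{3}kr$; with only $\tfrac{1}{3}kr$ these claims simply fail, not merely tighten. In short, the paper's argument is genuinely an average-degree (minor-minimality) argument, and converting it to a minimum-degree argument is not a matter of bookkeeping --- it is the open problem itself.
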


\citet{hwang1} recently made the following two conjectures.
\begin{conjecture}[\citet{hwang1}]
\label{conjecture:oddwang}
For every integer $k \geq 2$ and odd integer $r \geq 3$, every graph $G$ with at least $rk$ vertices and minimum degree at least $\tfrac{r+1}{2}k$ contains $k$ disjoint cycles, each containing at least $r$ vertices.
\end{conjecture}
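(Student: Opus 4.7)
My plan is to follow the extremal framework of Theorem~\ref{theorem:minimal}, with two substantive modifications: the minimal-graph bound $\tau(G) > \tfrac12 d(G) - 1$ -- no longer available -- is replaced by the classical inequality $|N(v)\cap N(w)| \geq 2\delta(G) - |V(G)| + 2 \geq (r+1)k - n + 2$, and the final contradiction -- previously extracted from $d(G) \geq \tfrac43 kr$ -- must now be derived from $n \geq rk$ alone.

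Assume $G$ is a counterexample and choose a collection $\mathcal{C}$ of disjoint cycles of order at most $r$ (allowing $K_1,K_2$ as degenerate $1$- and $2$-cycles), lexicographically maximal in $(|\mathcal{C}(r)|,\dots,|\mathcal{C}(1)|)$. The claims that depend only on the extremal choice of $\mathcal{C}$ -- namely the analogues of Claims~\ref{claim:truepartition}--\ref{claim:otherupstairstriangle}, \ref{claim:nolowedge} and \ref{claim:restrictneighbours}, together with the reroute Lemmas~\ref{lemma:reroute} and \ref{lemma:otherreroute} -- carry over verbatim. In particular every vertex lies in $V(\mathcal{C})$, so $n \leq (k-1)r + |V(\mathcal{U})|$; combined with the hypothesis $n \geq rk$ this forces $|V(\mathcal{U})| \geq r$. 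The goal is then to use the minimum-degree hypothesis to bound $|V(\mathcal{U})|$ strictly below $r$, producing the desired contradiction.

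The main obstacle is the reroute step. In Theorem~\ref{theorem:minimal} one had the generous bound $|W(vw)| > \tfrac{2}{3}(k-1)r$ via $\tau(G) > \tfrac{2}{3}kr - 1$, but direct substitution using only $|N(v) \cap N(w)| \geq (r+1)k - n + 2$ gives a lower bound on $|W(vw)|$ of order
\[
(r+1)k - n + 2 - |V(\mathcal{U})|,
\]
which in the extremal regime $n \approx rk$, $|V(\mathcal{U})| \approx r$ is only about $k - r$ -- far short of the $\tfrac{2}{3}(k-1)r$ demanded by Lemma~\ref{lemma:reroute}. The machinery must therefore be strengthened before it is applied: I would prove a minimum-degree variant of Lemma~\ref{lemma:reroute} requiring only $|S|,|T|$ roughly $\tfrac12 |V(\mathcal{F})|$ instead of $\tfrac23 |V(\mathcal{F})|$, and exploit the stronger constraint that $v$ and $w$ each \emph{individually} have many neighbours in $V(\mathcal{C}(r))$, not merely in common. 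The restriction to odd $r$ should enter at exactly this point: parity of $r$ rules out the near-balanced bipartite obstructions resembling $K_{(r+1)k/2-1,\,(r+1)k/2-1}$ that saturate the minimum-degree bound, and the new lemma will need a parity-sensitive case analysis to exclude them. I expect this strengthening, together with the matching sharpening of Claims~\ref{claim:kminus1cycles}--\ref{claim:structure}, to be the bulk of the work.
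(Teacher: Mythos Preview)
The statement you are attempting to prove is Conjecture~\ref{conjecture:oddwang}; it appears in the paper's \emph{Open Problems} section and the paper gives no proof of it. There is therefore nothing to compare your proposal against --- the authors explicitly leave this open, and the lower-bound discussion around the split graph $\mathcal{S}_{k,r,n}$ only shows the hypothesised minimum degree is necessary.

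On the substance of your plan: the replacement you suggest for the missing $\tau(G)$ bound does not survive the full range of the conjecture. The inclusion--exclusion bound $|N(v)\cap N(w)| \geq 2\delta(G) - n$ is only useful when $n$ is close to $rk$; the conjecture, however, allows $n$ arbitrarily large, and in that regime $2\delta(G)-n$ is negative and gives no information whatsoever. Your stated goal of forcing $|V(\mathcal{U})| < r$ is likewise specific to $n$ near $rk$: for large $n$ one has $|V(\mathcal{U})| = n - (k-1)r$ growing without bound, so the contradiction you set up cannot be reached that way. In short, the proposal addresses at best the boundary case $n = rk$ and is silent on the generic case.

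Even within the small-$n$ regime, the proposed strengthening of Lemma~\ref{lemma:reroute} to a $\tfrac12|V(\mathcal{F})|$ threshold is not supported. The proof of that lemma genuinely uses $\tfrac23$: the inequalities $|V(\mathcal{F})| \geq 2|S| - |\mathcal{P}|$ and $|V(\mathcal{F})| \geq |\mathcal{P}| + |T|$ combine to $2|V(\mathcal{F})| \geq 2|S| + |T|$, which yields a contradiction only when $2|S|+|T| > 2|V(\mathcal{F})|$. With $|S|,|T|$ only slightly above $\tfrac12|V(\mathcal{F})|$ this fails. Your remark that the individual neighbourhoods of $v$ and $w$ are large, and that odd $r$ rules out bipartite obstructions, may well be relevant ingredients, but as stated they are programmatic rather than arguments; the ``bulk of the work'' you flag is precisely the open content of the conjecture.
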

\begin{conjecture}[\citet{hwang1}]
\label{conjecture:evenwang}
For every integer $k \geq 3$ and even integer $r \geq 6$, every graph $G$ with at least $rk$ vertices and minimum degree at least $\tfrac{r}{2}k$ contains $k$ disjoint cycles, each containing at least $r$ vertices, unless $k$ is odd and $rk+1 \leq |V(G)| \leq rk+r-2$.
\end{conjecture}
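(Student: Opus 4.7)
The plan is to argue by contradiction in the style of Section~\ref{section:main}: suppose $G$ has $\delta(G) \geq \tfrac{r}{2}k$ and $|V(G)| \geq rk$ but contains no $k$ disjoint cycles of length $\geq r$, and is not in the exceptional family. Choose a family $\mathcal{C}$ of disjoint cycles (allowing $K_1,K_2$ as degenerate cycles) lexicographically maximising $(|\mathcal{C}(r)|,|\mathcal{C}(r-1)|,\dots,|\mathcal{C}(1)|)$, so every vertex lies in some cycle of $\mathcal{C}$ and we may assume $|\mathcal{C}(r)| \leq k-1$. The structural Claims~\ref{claim:truepartition}--\ref{claim:nolowedge} and the rerouting Claims~\ref{claim:firstroute}--\ref{claim:firstrouteb} transfer verbatim, since they depend only on the choice of $\mathcal{C}$ and on Lemma~\ref{lemma:reroute}, not on any degree hypothesis.

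The central new difficulty is that the hypothesis gives only $\delta(G) \geq \tfrac{r}{2}k$, with no lower bound on $\tau(G)$; in our theorem the bound $\tau(G) > \tfrac{2}{3}kr - 1$ powered every estimate on $|W(vw)|$. I would compensate for the lost factor by exploiting that $r$ is even: the extremal graph $K_{\frac{rk}{2}-1,n}$ is bipartite, and any cycle of length $\geq r$ in a bipartite graph uses at least $r/2$ vertices on each side. Concretely, I would replace Claims~\ref{claim:kminus1cycles}--\ref{claim:smallissmall} with variants bounding, for each $C \in \mathcal{U}$ and each $F_i \in \mathcal{C}(r)$, the number of common neighbours of an edge $vw \in C$ inside $F_i$ by $r/2$ (rather than $r-2$), using the parity of $r$ to rule out the ``alternating'' configurations that would otherwise allow more common neighbours without violating Claim~\ref{claim:upstairstriangle}. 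Feeding these bipartite-sharpened estimates into $|W(vw)| \geq \delta(G) - (\text{exclusions})$ should then produce $|W(vw)| > \tfrac{2}{3}|V(\mathcal{C}(r))|$, at which point Lemma~\ref{lemma:reroute} fires as before and contradicts the maximality of $\mathcal{C}$.

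The main obstacle, as I expect it, is the exceptional case. When $k$ is odd and $|V(G)| \in [rk+1, rk+r-2]$ there is a genuine parity obstruction in the near-bipartite construction: the $k$ long cycles would need to be distributed evenly between the two sides of an almost-balanced bipartition, but an odd number $k$ forces a leftover pocket of $1,\dots,r-2$ vertices that cannot be absorbed. Extracting the contradiction therefore requires a stability argument, showing that if all the inequalities in the main proof are simultaneously nearly tight then $G$ must be one of an explicit list of near-bipartite graphs on $rk+1,\dots,rk+r-2$ vertices, each of which is verified directly. This stability step, together with the correct formulation of the near-bipartite structural claims used in the previous paragraph, appears to be the most technically demanding ingredient, and is where the restriction to even $r\geq 6$ and the precise exceptional interval enter.
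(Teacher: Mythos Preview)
The statement you are attempting to prove is Conjecture~\ref{conjecture:evenwang}, which the paper records as an \emph{open problem} due to Wang; the paper contains no proof of it, only an explanation of why the minimum-degree threshold and the exceptional range are necessary (via the split graph $\mathcal{S}_{k,r,n}$ and the disjoint-union-of-cliques examples). There is therefore nothing in the paper to compare your argument against.

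That said, your proposed adaptation has a concrete gap. The engine of Section~\ref{section:main} is the inequality $\tau(G) > \tfrac{1}{2}d(G)-1$, obtained from \emph{minimality with respect to average degree}; this is precisely what feeds every estimate of the form $|W(vw)| \geq \tau(G) - (\text{exclusions})$ for an edge $vw$. A minimum-degree hypothesis alone gives no nontrivial lower bound on $\tau(G)$, and your line ``feeding these bipartite-sharpened estimates into $|W(vw)| \geq \delta(G) - (\text{exclusions})$'' silently replaces $W(vw)=N(v)\cap N(w)\cap V(\mathcal{C}(r))$ by $W(v)=N(v)\cap V(\mathcal{C}(r))$. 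The parity of $r$ does not rescue this: for a general (not yet known to be near-bipartite) $G$, the common neighbours of $v$ and $w$ inside a cycle $F_i\in\mathcal{C}(r)$ are not constrained by the parity of $|F_i|$, and nothing in the hypotheses forces $G$ near a bipartite structure prior to any stability analysis. Since the rest of your plan (including the handling of the exceptional interval) is built on having $|W(vw)|>\tfrac{2}{3}|V(\mathcal{C}(r))|$ to fire Lemma~\ref{lemma:reroute}, the argument does not go through as written.
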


These conjectures follow from the best known lower bounds on the required minimum degree. Consider the split graph $\mathcal{S}_{k,r,n}$ with a clique of order $\ceil{\tfrac{r}{2}}k-1$, an $n$-vertex independent set, and all edges between the two. Such a graph does not contain $k$ disjoint cycles of order at least $r$ since any such cycle must contain at least $\ceil{\tfrac{r}{2}}$ vertices of the clique. However, the minimum degree of $\mathcal{S}_{k,r,n}$ is $\ceil{\tfrac{r}{2}}k-1$ when $n \geq 1$. Due to this example the lower bounds on the minimum degree in Conjectures~\ref{conjecture:oddwang} and \ref{conjecture:evenwang} are necessary.

As \citeauthor{hwang1} shows, the exception when $k$ is odd and $rk+1 \leq |G| \leq rk+r-2$ is due to the disjoint union $K_{(k-1)p + p + i} \cup K_{(k-1)p + p + j}$, where $0 \leq i \leq j \leq \tfrac{1}{2}r-1$ and $j \neq 0$ and $p = \tfrac{1}{2}r$. Due to the limited number of vertices, each component contains at most $\tfrac{k-1}{2}$ disjoint cycles of order at least $r$, and hence the graph does not contain $k$ such disjoint cycles in total. However, if $i \geq 1$ then the minimum degree is at least $\tfrac{k-1}{2}r + \tfrac{1}{2}r + i -1 \geq \tfrac{r}{2}k$. If $i=0$, we can achieve the same minimum degree by adding edges from one vertex of the second clique to all vertices of the first---this does not increase the number of disjoint cycles.

We conjecture the following weakening of Conjectures~\ref{conjecture:oddwang} and \ref{conjecture:evenwang} by considering the average degree instead of the minimum degree. Note that while the average degree of $\mathcal{S}_{k,r,n}$ tends to $2\ceil{\tfrac{r}{2}}k-2$ as $n \rightarrow \infty$, the average degree of the disjoint union of complete graphs (or its modification) is small enough that we no longer need to consider it. Thus, the lower bound follows only from the existence of $\mathcal{S}_{k,r,n}$.

\begin{conjecture}
\label{conjecture:myodd}
For every integer $k \geq 2$ and odd integer $r \geq 3$, every graph $G$ with average degree at least $(r+1)k-2$ contains $k$ disjoint cycles, each containing at least $r$ vertices.
\end{conjecture}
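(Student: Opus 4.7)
The plan is to adapt the minimal-graph framework of Theorem~\ref{theorem:minimal} to the sharper hypothesis. Assume for contradiction that $G$ is minimal with $d(G) \geq (r+1)k-2$ and contains no $k$ disjoint cycles of length at least $r$. Since $r+1$ is even, minimality yields $\delta(G) \geq \tfrac{(r+1)k}{2}$ and $\tau(G) \geq \tfrac{(r+1)k}{2} - 1$. Select $\mathcal{C}$ lexicographically as in Section~\ref{section:main}, define $\mathcal{U}$, and reduce to the case $|\mathcal{C}(r)| \leq k-1$. Claims~\ref{claim:truepartition}--\ref{claim:nolowedge} transfer without change. The extremal split graph $\mathcal{S}_{k,r,n}$ suggests that the correct threshold for ``big'' is $|C| > \tfrac{r-1}{2}$, since every $r$-cycle in $\mathcal{S}_{k,r,n}$ must use exactly $\tfrac{r+1}{2}$ clique vertices.

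The principal technical obstacle is the rerouting step. Lemma~\ref{lemma:reroute} achieves density threshold $\tfrac{2}{3}$ because its double-count $2|V(\mathcal{F})| \geq 2|S| + |T|$ forces $3\alpha \leq 2$ when $|S|,|T| \geq \alpha|V(\mathcal{F})|$. Under the conjecture's hypothesis the common-neighbourhoods $|W(vw)|$ can only be guaranteed to exceed roughly $\tfrac{1}{2}|V(\mathcal{C}(r))|$ (asymptotically in $k$), so a naive re-thresholding fails. My proposed workaround exploits the parity of $r$: instead of rerouting via a single $q_i$-vertex path in one $r$-cycle of $\mathcal{F}$, seek paired paths $P \subseteq F_i$ and $P' \subseteq F_j$ of lengths $\tfrac{r+1}{2}$ and $\tfrac{r-1}{2}$ with prescribed endpoint sets, so that two cycles of $\mathcal{F}$ can be repackaged together with a cycle of $\mathcal{U}$ into cycles each of length at least $r$. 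Because $\tfrac{r+1}{2} + \tfrac{r-1}{2} = r$ holds exactly, such a complementary pairing is available only for odd $r$; the requisite pigeonhole count would then go through at density $\tfrac{1}{2}$, delivering the needed lemma.

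Assuming this rerouting lemma, the structural claims proceed along the lines of Claims~\ref{claim:kminus1cycles}--\ref{claim:structure}, but with the cycle threshold $\tfrac{r+1}{2}$ in place of $\tfrac{2}{3}r$ and density threshold $\tfrac{1}{2}$ in place of $\tfrac{2}{3}$. Tightening Claim~\ref{claim:restrictneighbours} via the improved slack in $\tau(G)$ should force a new structural description of $\mathcal{U}$: concretely, that $\mathcal{U}$ contains at most one big cycle and a small independent remainder, with $|V(\mathcal{U})| \leq r - 2$. Combined with $|V(\mathcal{C}(r))| = (k-1)r$ this yields $|V(G)| \leq kr - 2$, whence $d(G) \leq kr - 3$, contradicting $d(G) \geq (r+1)k - 2$ for $k \geq 2$.

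The hardest part will be justifying the paired-paths rerouting lemma at density $\tfrac{1}{2}$. It is quite possible that the lemma requires a structural case split based on how $S$ and $T$ concentrate among the $r$-cycles of $\mathcal{F}$: if a single $F_i$ hosts the majority of both $S$ and $T$, a within-one-cycle argument analogous to Lemma~\ref{lemma:imprweirdcase} should suffice; otherwise, a balanced distribution allows the cross-cycle exchange sketched above. A subtler difficulty is that the extremal bound $(r+1)k-2$ is sharp only asymptotically in $n$, so the arithmetic tolerates essentially no slack in the final vertex count and every structural claim must be sharp to lowest order. I expect that without a genuinely new idea at the rerouting step the conjecture will remain open for $r \geq 5$; the case $r=3$ already follows from the Corr\'adi--Hajnal theorem.
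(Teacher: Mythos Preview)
This statement appears in the paper as Conjecture~\ref{conjecture:myodd}, in the Open Problems section; the paper does not prove it, so there is no proof to compare your attempt against. What you have submitted is not a proof but a research outline with acknowledged gaps, as your own final paragraph makes explicit.

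The decisive gap is the one you yourself identify. The rerouting machinery of Lemmas~\ref{lemma:reroute} and~\ref{lemma:otherreroute} genuinely requires density above $\tfrac{2}{3}$: the inequality $2|V(\mathcal{F})|\geq 2|S|+|T|$ in the proof of Lemma~\ref{lemma:reroute} is tight at $\tfrac{2}{3}$, and under the conjecture's weaker hypothesis $d(G)\geq (r+1)k-2$ the common-neighbourhood sets $W(vw)$ can only be pushed to density about $\tfrac{1}{2}$ in $V(\mathcal{C}(r))$. Your proposed paired-paths substitute is undeveloped: you do not specify which cycles the repackaging of $F_i$, $F_j$ and a cycle of $\mathcal{U}$ is meant to produce, nor why the outcome improves the lexicographic choice of $\mathcal{C}$, and the claim that the pigeonhole count ``would then go through at density $\tfrac{1}{2}$'' has no argument behind it. The endgame is equally unsupported: you assert $|V(\mathcal{U})|\leq r-2$, which is strictly sharper than the bound $|V(\mathcal{U})|\leq |C_0|+|C_1|+1\leq (r-1)+\tfrac{2}{3}r+1$ that the paper obtains under the \emph{stronger} hypothesis $d(G)\geq\tfrac{4}{3}kr$, yet you give no mechanism for the improvement. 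Since the split graph $\mathcal{S}_{k,r,n}$ shows that $(r+1)k-2$ is asymptotically sharp, every step would have to be essentially tight, and none of yours is made so. Your closing self-assessment---that without a genuinely new idea the conjecture remains open for $r\geq 5$---is accurate.
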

\begin{conjecture}
\label{conjecture:myeven}
For every integer $k \geq 3$ and even integer $r \geq 6$, every graph $G$ with average degree at least $rk-2$ and at least $rk$ vertices contains $k$ disjoint cycles, each containing at least $r$ vertices.
\end{conjecture}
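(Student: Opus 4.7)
The plan is to extend the minor-based methodology of Theorem~\ref{theorem:minimal} to the substantially tighter degree bound $rk-2$. First, reduce to a minimal minor $G$ preserving $d(G) \geq rk-2$; since $r$ is even, $rk-2$ is even, and the minimality inequalities give $\delta(G) \geq \tfrac{rk}{2}$ and $\tau(G) \geq \tfrac{rk}{2}-1$. Assuming for contradiction that no $k$ disjoint cycles of order $\geq r$ exist, choose a collection $\mathcal{C}$ of disjoint cycles of order $\leq r$ maximizing the lexicographic vector $(|\mathcal{C}(r)|, |\mathcal{C}(r-1)|, \dots, |\mathcal{C}(1)|)$, and set $\mathcal{U} := \mathcal{C}(1) \cup \dots \cup \mathcal{C}(r-1)$. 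Claims~\ref{claim:truepartition}--\ref{claim:restrictneighbours} depend only on the choice of $\mathcal{C}$, so they carry over verbatim.

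The heart of the proof is strengthening Lemma~\ref{lemma:reroute}, whose $\tfrac{2}{3}$ threshold is precisely what drives the $\tfrac{4}{3}kr$ bound of Theorem~\ref{theorem:cycles}. With $\delta(G) \geq \tfrac{rk}{2}$, only sets of size $> \tfrac{1}{2}|V(\mathcal{F})|$ are automatically available, so I would need a rerouting lemma operating at this tighter threshold. A naive $\tfrac{1}{2}$ version fails: on an $r$-cycle, taking $S = T$ equal to every second vertex and $q$ even defeats it. This parity obstruction is exactly what the extremal graph $\mathcal{S}_{k,r,n}$ realises, as every $r$-cycle there alternates between clique and independent set, forcing each $W(vw)$ into the ``clique half'' of every $F_i \in \mathcal{C}(r)$.

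The key step would therefore be a \emph{stability} rerouting lemma: when $|S|, |T| > \tfrac{1}{2}|V(\mathcal{F})| - \varepsilon$ and no rerouting path exists, each $F_i$ admits a bipartition $V(F_i) = X_i \cup Y_i$ into alternating halves with $S \cap V(F_i) \subseteq X_i$ and $T \cap V(F_i) \subseteq Y_i$ up to a bounded discrepancy. The first branch of this lemma would feed into analogues of Claims~\ref{claim:firstroute}--\ref{claim:smallissmall}, ideally forcing $|\mathcal{C}(r)| = k-1$ and giving $\mathcal{U}$ a highly restricted structure. The second branch puts us in the near-extremal regime, where $\mathcal{C}(r)$ locally resembles $\mathcal{S}_{k,r,n}$, and the cycles of $\mathcal{U}$ must attach to only one ``side'' of each $F_i$; this rigidity can be fed back into the global counting.

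The final counting step must use both $d(G) \geq rk-2$ and the hypothesis $|V(G)| \geq rk$, since in the extremal example $\mathcal{U}$ can contain arbitrarily many $1$-cycles drawn from the independent set. This is the main obstacle: unlike Theorem~\ref{theorem:minimal}, where $|V(\mathcal{U})|$ was bounded by a constant multiple of $r$, the extremal example $\mathcal{S}_{k,r,n}$ rules out any absolute bound on $|V(\mathcal{U})|$. The contradiction must instead come from a careful count of edges between the ``clique-like'' part realised inside $\mathcal{C}(r)$ and the ``independent-like'' part realised inside $\mathcal{U}$, combined with the rigid local information from the stability lemma. I expect executing this rigorously to be substantially more delicate than the proof of Theorem~\ref{theorem:minimal}, and it may well require genuinely new ideas to match the sharp constant.
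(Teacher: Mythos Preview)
The statement you are attempting to prove is Conjecture~\ref{conjecture:myeven}, and the paper does \emph{not} prove it: it appears in Section~5 (Open Problems) precisely as an open conjecture, with no argument offered beyond the observation that the split graph $\mathcal{S}_{k,r,n}$ shows the bound would be tight and that Conjecture~\ref{conjecture:evenwang} of Wang would imply it. So there is no ``paper's own proof'' to compare your proposal against.

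What you have written is therefore not a proof but a research outline for an open problem, and you yourself acknowledge as much in the final paragraph. Your diagnosis of the obstacles is accurate: the $\tfrac{2}{3}$ threshold in Lemma~\ref{lemma:reroute} is exactly what produces the constant $\tfrac{4}{3}$ in Theorem~\ref{theorem:cycles}, a $\tfrac{1}{2}$-threshold version genuinely fails for parity reasons, and the extremal example $\mathcal{S}_{k,r,n}$ shows $|V(\mathcal{U})|$ cannot be bounded in terms of $r$ alone. The ``stability rerouting lemma'' you sketch is a reasonable line of attack, but as stated it is a wish rather than a lemma: you have not specified the discrepancy, shown how to derive the alternating bipartition from failure of rerouting, or explained how the near-extremal edge count actually closes. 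Until those pieces are supplied, this remains a plausible strategy for an open conjecture, not a proof.
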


Conjecture~\ref{conjecture:oddwang} or \ref{conjecture:evenwang} would imply Conjecture~\ref{conjecture:myodd} or \ref{conjecture:myeven} respectively. The fact that considering the average degree rather than the minimum degree eliminate a class of possible counterexamples is promising, and suggests that results of this kind may be simpler and cleaner.

Finally, we present the following extension of Conjectures~\ref{conjecture:myodd} and \ref{conjecture:myeven} to $2$-regular $t$-vertex graphs. This is also a strengthening of Conjecture~\ref{conjecture:reedwood}.

\begin{conjecture}
\label{conjecture:strong}
Let $H$ be $2$-regular $t$-vertex graph with $c$ odd order components, such that $H$ is not a $t$-vertex even cycle or $\tfrac{t}{4}$ cycles of order 4. Every graph with average degree at least $t+c-2$ and at least $t$ vertices contains $H$ as a minor. 
\end{conjecture}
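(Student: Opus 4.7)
The plan is to mimic the structure of Theorem~\ref{theorem:minimal}: pass to a minimal counterexample $G$ with $d(G) \geq t+c-2$, choose a collection of vertex-disjoint cycles aligned as closely as possible with the components of $H$, and derive a contradiction via rerouting. Write the components of $H$ as $C_{r_1}, \ldots, C_{r_k}$ with $r_1 \geq \cdots \geq r_k \geq 3$ and $\sum r_i = t$. Minimality yields $\delta(G) > (t+c-2)/2$ and $\tau(G) > (t+c-2)/2 - 1$. Choose a family $\mathcal{C}$ of vertex-disjoint cycles in $G$ (allowing $K_1$ and $K_2$ as degenerate cycles, as in Section~\ref{section:main}) equipped with a partial injection $\varphi$ from $\mathcal{C}$ to components of $H$ matching orders exactly, where $\mathcal{C}$ lexicographically maximises the multiset of matched orders from largest to smallest. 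If $\varphi$ is total then $G$ contains $H$ as a minor; otherwise let $\mathcal{U}$ denote the unmatched leftover cycles.

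Next, establish structural analogues of Claims~\ref{claim:truepartition}--\ref{claim:restrictneighbours}: every vertex of $G$ lies in some cycle of $\mathcal{C}$; an edge $vw$ in a leftover cycle $C$ has no common neighbour in another leftover cycle of smaller or equal order (else $C$ could be extended toward an unrealised component); and common neighbourhoods of adjacent vertices in leftover cycles have bounded intersection with each matched cycle. Then formulate parity-sensitive rerouting claims generalising Claims~\ref{claim:firstroute}--\ref{claim:secondroute}: to realise a yet-unmatched $H$-component of order $r'$ by absorbing a $q$-vertex path $P$ from a matched cycle $F$ of order $r_F$, one needs $q = r' - |C|$ for some leftover $C$, and simultaneously the residue $F - V(P)$, of order $r_F - q$, must itself match some other unrealised component. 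Both the length \emph{and} parity of $q$ are therefore constrained. Iterating these rerouting claims, together with vertex-counting arguments in the spirit of Claims~\ref{claim:twocycles}, \ref{claim:kminus1cycles} and \ref{claim:smallissmall}, should force $|V(\mathcal{U})|$ to be small enough that a final vertex count contradicts $d(G) \geq t+c-2$.

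The main obstacle is producing a parity-sensitive replacement for Lemmas~\ref{lemma:reroute} and especially \ref{lemma:otherreroute}: given disjoint cycles $\mathcal{F}$, prescribed target lengths and parities $(q_i, \pi_i)$, and sets $S, T \subseteq V(\mathcal{F})$ above an appropriate density threshold, produce a parity-compatible path with ends in $S$ together with a $T$-vertex off the path. The additional density needed to guarantee the correct parity is exactly what inflates the average-degree bound from $t-2$ to $t+c-2$: each odd component of $H$ contributes one unit of parity freedom on the matched cycle that will host the reroute. The case-analysis integer programme in the proof of Lemma~\ref{lemma:otherreroute} grows substantially once parities are tracked, and $t+c-2$ should be tight precisely when the corresponding programme is barely infeasible. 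The two exceptional $H$ (the even Hamiltonian cycle and $t/4$ disjoint $4$-cycles) have $c=0$ and highly rigid structure, leaving no parity-flexibility at all; one must either exhibit genuine extremal graphs of density $t-2$ with no $H$-minor showing that these cases require a larger threshold, or treat them separately using the special structure of $H$. Designing the right parity-sensitive rerouting lemma at the sharp density threshold, and carrying out the large case analysis it induces, looks like the technical heart of the problem and may require genuinely new combinatorial input rather than a routine adaptation of Section~\ref{sec:anc}.
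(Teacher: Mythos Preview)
This statement is Conjecture~\ref{conjecture:strong}, and the paper does \emph{not} prove it: it appears in the ``Open Problems'' section as a strengthening of Conjectures~\ref{conjecture:reedwood}, \ref{conjecture:myodd} and \ref{conjecture:myeven}. The only supporting material the paper provides is the justification for the two excluded cases (disjoint copies of $K_{t-1}$ for the even $t$-cycle, and the matching-augmented split graph $\mathcal{S}'_{k,r,n}$ for $\tfrac{t}{4}$ disjoint $4$-cycles), after which the authors simply declare the conjecture ``plausible, if strong''. There is therefore no proof in the paper for your proposal to be compared against.

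Your document is not a proof but a programme, and you are candid about this: you say the rerouting lemma ``may require genuinely new combinatorial input rather than a routine adaptation''. That is an accurate assessment. The outline you give---pass to a minimal graph, pick an optimal cycle packing relative to the target multiset $\{r_1,\dots,r_k\}$, and reroute---is the natural attempt to push Section~\ref{section:main} beyond the uniform case $r_1=\cdots=r_k$. But be aware that several steps you describe as analogues do not transfer cleanly. The lexicographic choice of $\mathcal{C}$ in the paper relies on all target cycle lengths being equal, so that any increase in a short cycle's length is progress; with mixed targets $r_1>\cdots>r_k$ a cycle can overshoot one target yet undershoot the next, and the monotonicity that drives Claims~\ref{claim:upstairstriangle}--\ref{claim:otherupstairstriangle} breaks. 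Your parity bookkeeping is also speculative: nothing in the paper's arguments is parity-aware, and your heuristic that ``each odd component contributes one unit'' to the threshold is a restatement of the conjectured bound rather than a mechanism. In short, what you have written is a reasonable research plan for an open problem, not a proof, and the paper offers nothing further to compare it with.
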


We require that $H$ not be a single even cycle since the graph consisting of $n \geq 2$ disjoint copies of $K_{t-1}$ has average degree $t-2$ and at least $t$ vertices but contains no $t$-vertex cycle. %let $t \geq 4$ and let $B$ be the graph consisting of $n$ copies of $K_{t-1}$, labelled $B_1, \dots, B_n$, with a single edge between each $B_i$ and $B_{i+1}$ for $1 \leq i \leq n-1$ such that no vertex has degree greater than $t-1$. The graph $B$ does not contain a $t$-vertex cycle, since any cycle is either contained in some $B_i$, which is too small, or contains a bridge, which is impossible. However, the minimum degree of $B$ is $t-2$.
With regards to the other exception in Conjecture~\ref{conjecture:strong}, consider the graph $\mathcal{S}_{k,r,n}'$ constructed from $\mathcal{S}_{k,r,n}$ by adding a perfect matching on the independent set of order $n$. The graph $\mathcal{S}_{k,r,n}'$ does not contain $k$ disjoint cycles of order at least four since any such cycle still requires two vertices of the clique. However, the average degree of $\mathcal{S}'$ tends towards $4k-1=t-1 > t-2$ as $n \rightarrow \infty$. The graph $\mathcal{S}_{k,r,n}'$ is one of the three exceptions presented in \citep{hwang1}; again, promisingly, the other two exceptions presented there are not exceptions when considering the average degree. Hence Conjecture~\ref{conjecture:strong} seems plausible, if strong.

%Conjecture~\ref{conjecture:strong} states that in general, these are the only exceptions to consider. Note that the graph $B$ above is not a contradiction to the conjecture when $t$ is odd; since the graph not regular it has average degree less than the maximum degree, which is $t-1$. Similarly, the obvious extension of $\mathcal{S}_{k,r,n}'$ for cycles of larger order created by replacing the independent set (or matching) with a set of disjoint copies of $K_3,K_4,\dots$ has average degree too small to contradict the conjecture. Hence Conjecture~\ref{conjecture:strong} seems plausible, if strong.
%
%More generally, we may ask the question ``What is the smallest value of $d(G)$ which forces $G$ to contain the graph consisting of $k$ disjoint cycles of order $r$ as a minor?" The current best known lower bound is $2\ceil{\tfrac{r}{2}}k-2$. To see this, let $X$ be the split graph consisting of a $(\ceil{\tfrac{r}{2}}k-1)$-vertex clique and an $n$-vertex independent set, with all edges between these two parts. Such an $X$ does not contain $k$ disjoint cycles of order at least $r$, since any such cycle must contain at least $\ceil{\tfrac{r}{2}}$ vertices of the clique. However, $d(X)$ tends to $2\ceil{\tfrac{r}{2}}k-2$ as $n \rightarrow \infty$. A similar argument gives the following more general result: if $H$ is a $2$-regular $t$-vertex graph such that $c$ components have odd order, then the required average degree to force $H$ as a minor is at least $t+c-2$. This gives us the following two strong conjectures:
\bibliographystyle{abbrvnat}
\bibliography{sparse,fastbib}

\end{document}